\providecommand{\U}[1]{\protect\rule{.1in}{.1in}}
\providecommand{\U}[1]{\protect\rule{.1in}{.1in}}
\providecommand{\U}[1]{\protect\rule{.1in}{.1in}}
\providecommand{\U}[1]{\protect\rule{.1in}{.1in}}
\providecommand{\U}[1]{\protect\rule{.1in}{.1in}}
\newcommand{\ulambda}{{\boldsymbol{\lambda}}}
\newcommand{\umu}{{\boldsymbol{\mu}}}
\newcommand{\uemptyset }{{\boldsymbol{\emptyset}}}
\newtheorem{Th}{Theorem}[subsection]
\numberwithin{equation}{section}
\newtheorem{lemma}[Th]{Lemma}
\newtheorem{Cor}[Th]{Corollary}
\newtheorem{Prop}[Th]{Proposition}
\newtheorem{conj}[Th]{Conjecture}
\theoremstyle{remark}
\newtheorem{Rem}[Th]{Remark}{\rmfamily}
\theoremstyle{definition}
\newtheorem{Def}[Th]{Definition}{\rmfamily}
\newtheorem{exa}[Th]{Example}{\rmfamily}
\def\C{\mathbb{C}}
\def\Z{\mathbb{Z}}
\newcommand{\Irr}{\operatorname{Irr}}
\def\Irr{{\mathrm{Irr}}}
\def\Q{{\mathbb Q}}
\def\N{{\mathbb N}}
\def\Z{{\mathbb Z}}
\def\C{{\mathbb C}}
\newcommand\blfootnote[1]{%
  \begingroup
  \renewcommand\thefootnote{}\footnote{#1}%
  \addtocounter{footnote}{-1}%
  \endgroup
}
\thanks{Both authors were supported by the \emph{Agence Nationale de la Recherche} through the JCJC project ANR-18-CE40-0001 for the realisation of this project.}
\begin{document}

\title{Defect in cyclotomic Hecke algebras}

\author{Maria Chlouveraki}
\address{National and Kapodistrian University of Athens, Department of Mathematics, 
Panepistimioupolis 15784 Athens, Greece.}
\email{mchlouve@math.uoa.gr}

\author{Nicolas Jacon}
\address{Universit\'{e} de Reims Champagne-Ardennes, UFR Sciences exactes et
naturelles. Laboratoire de Math\'{e}matiques UMR CNRS 9008. Moulin de la Housse BP
1039. 51100 REIMS. France.}
\email{nicolas.jacon@univ-reims.fr} 

\maketitle
\date{}
\blfootnote{\textup{2020} \textit{Mathematics Subject Classification}: \textup{20C08, 05E10, 20C20}} 
\begin{abstract}

The complexity of a block of a symmetric algebra can be measured by the notion of defect, a numerical datum associated with each of the simple modules contained in the block. Geck showed that the defect is a block invariant for Iwahori--Hecke algebras of finite Coxeter groups in the equal parameter case, and speculated that a similar result should hold in the unequal parameter case. We prove that the defect is a block invariant for all cyclotomic Hecke algebras associated with the complex reflection groups of the infinite series $G(l,p,n)$, which include the Weyl groups of type $B_n$ in the unequal parameter case. In particular, for the groups $G(l,1,n)$, we show that the  defect corresponds to the notion of  weight in the sense of Fayers.   We thus also obtain a new way of computing the weight, which uses a generalisation of the notion of hook lengths. We further show computationally that the defect is a block invariant for all cyclotomic Hecke algebras of exceptional type for which the blocks are known, and we conjecture that the result should hold for all complex reflection groups. Finally, we obtain that the defect is also a block invariant for cyclotomic Yokonuma--Hecke algebras.


\end{abstract}

\section{Introduction}

Iwahori--Hecke algebras associated with Weyl groups appear as endomorphism algebras of induced representations in the study of the representation theory of finite reductive groups. They can also be defined independently as one-parameter deformations of the Weyl group algebras.  Cyclotomic Hecke algebras generalise the notion of Iwahori--Hecke algebras from Weyl groups (and more generally, real reflection groups) to complex reflection groups, and they can be also seen as one-parameter deformations of the corresponding group algebras. Cyclotomic Hecke algebras associated with complex reflection groups appear naturally in the generalised Harish-Chandra series of finite reductive groups, as well as in the ordinary Harish-Chandra series of Spetses, the mysterious yet objects that are expected to be generalisations of finite reductive groups. Since their first appearance in the works of Brou\'e, Malle, Michel and Rouquier \cite{BM, BMR, BMM}, the study of cyclotomic Hecke algebras associated with complex reflection groups has grown as a subject on its own right, with numerous connections to other research areas in algebra, geometry, topology and even theoretical physics.

Let $W$ be a complex reflection group.
The irreducible representations of a cyclotomic Hecke algebra associated with $W$ are in bijection with the irreducible representations of $W$. When the parameter of the algebra is specialised to a complex number,  we obtain a decomposition matrix that records how the irreducible representations of the cyclotomic Hecke algebra (labelling the rows of the matrix) decompose into irreducible representations of the specialised algebra (labelling the columns of matrix). The decomposition matrix has a block diagonal form and we say that two irreducible representations of $W$ are in the same block if they label rows with non-zero entries in the same block of the matrix. 
If the specialised algebra is semisimple, then the decomposition matrix is the identity matrix.

The decomposition matrix is a tool used first by Brauer for the study of the representation theory of finite groups in characteristic $p>0$. If $G$ is a finite group and $V$ is an irreducible representation of $G$ over an algebraically closed field of characteristic $0$, with character $\chi_V$, then one can define the $p$-defect of $V$ to be the $p$-part of the integer $|G|/\chi_V(1)$. The $p$-defect of a block is the maximal $p$-defect of the representations it contains, and it is a good way to measure its complexity. Blocks of $p$-defect $0$ are simply singletons, while the structure of  blocks of $p$-defect $1$ is well-understood. However, the $p$-defect is not a block invariant, that is, not all representations in the same block have the same $p$-defect.

If $G$ is a finite group as above and $\Irr(G)$ denotes the set of its irreducible representations over an algebraically closed field of characteristic $0$, then the linear map $\tau:=\sum_{V\in \Irr(G)} ({\chi_V(1)}/{|G|})\chi_V$ is a symmetrising trace on $\Z[G]$, \emph{i.e.,} the bilinear map defined by $\tau$ is symmetric and non-degenerate. 
We say that $\Z[G]$ is a symmetric algebra and we call $\tau$ the canonical symmetrising trace on $\Z[G]$.
In the context of symmetric algebras, the integers $|G|/\chi_V(1)$ are the Schur elements with respect to $\tau$.

Let $W$ be a complex reflection group as before. A  generalisation of the canonical symmetrising trace of the group algebra of $W$ to the cyclotomic Hecke algebras associated with $W$ was conjectured to exist in \cite{BMM}. Its existence was a known fact for real reflection groups, but it remains an open problem for non-real ones. Any complex reflection group is isomorphic to a direct product of irreducible ones, which either belong to the infinite series $G(l,p,n)$ or to the exceptional groups $G_4, G_5,\ldots,G_{37}$,
following the classification by Shephard and Todd \cite{ShTo}. 
A list of groups for which the so-called ``BMM symmetrising trace conjecture'' has been proved is given in Subsection \ref{sec-Hecke}, right after stating the conjecture. However, the Schur elements with respect to the conjectural canonical symmetrising trace have been completely determined for all complex reflection groups. These Schur elements are products of $K$-cyclotomic polynomials, where $K$ is the splitting field of $W$. Using the bijection between the irreducible representations of $W$ and those of any cyclotomic Hecke algebra associated with $W$, we denote by $s_V$ the Schur element of $V \in \Irr(W)$. If $\Phi$ is a $K$-cyclotomic polynomial, then we define the $\Phi$-defect of $V$ to be the multiplicity of $\Phi$ as a factor of $s_V$.

Now, Schur elements yield a semisimplicity criterion for symmetric algebras: a symmetric algebra is semisimple if and only if all its Schur elements are non-zero. 
Therefore, if we specialise the parameter of a cyclotomic Hecke algebra to a complex number, and this complex number is not a root of unity (and more specifically, not a root of one of the $K$-cyclotomic polynomials appearing in the factorisation of its Schur elements), then the decomposition matrix is the identity matrix. If on the other hand we specialise the parameter to a root of a $K$-cyclotomic polynomial $\Phi$, we can define the $\Phi$-defect of a block to be the maximal $\Phi$-defect of the representations contained in the block.

In \cite{G} Geck showed that, contrary to $p$-defect for finite groups, the $\Phi$-defect is a block invariant for Iwahori--Hecke algebras of  Weyl groups in the equal parameter case, that is, all representations of a block share the same $\Phi$-defect. This result was generalised to all real reflection groups 
with the explicit calculation of the decomposition matrices for the exceptional groups in \cite{GP}. He  later conjectured that this result should also hold in the unequal parameter case for all real reflection groups. In the current paper, we 
prove that the $\Phi$-defect is a block invariant for all cyclotomic Hecke algebras associated with the complex reflection groups of the  infinite series $G(l,p,n)$, including thus the Weyl groups of type $B_n$ in the unequal parameter case. We also  computationally verify that the same result holds for all exceptional groups for which the decomposition matrices have been explicitly calculated. Based on all this evidence, we conjecture that the  defect is a block invariant for all cyclotomic Hecke algebras associated with complex reflection groups.

Hecke algebras associated with the groups of type $G(l,1,n)$ are also known as Ariki--Koike algebras. Their simple modules are parametrised by the $l$-partitions of $n$. Using the Schur element formula for the Ariki--Koike algebras that we developed in \cite{CJ}, we are able to give a combinatorial expression for the defect of the simple module $V^{\ulambda}$, where $\ulambda$ is an $l$-partition of $n$. 
This expression introduces the notion of charged hook lengths of a multipartition, which generalises the notion of generalised hook lengths introduced in \cite{CJ}.
Thanks to the Morita equivalence by Dipper and Mathas \cite{DiMa}, we can restrict to a specific setting, where the $\Phi$-defect of a module is given by the number of charged hook lengths that are congruent to $0$ modulo $e$, where $e$ is the order of the root of unity whose minimal polynomial is $\Phi$. We compute this number using the abacus decomposition of  $\ulambda$ and we prove that it is equal to the weight of $\ulambda$, another combinatorial datum attached to a multipartition by Fayers \cite{F}. 
Since the weight is a block invariant, we are able to deduce that the defect is a block invariant. 
This concludes the proof of our main result for the groups $G(l,1,n)$ (Theorem \ref{main-AK}), to which the whole Section 3 is devoted. At the same time, we obtain a new way of calculating the weight, using the easy-to-compute charged hook lengths. In Section 4, we extend the result of the defect being a block invariant to the groups $G(l,p,n)$ with the use of Clifford theory.

Furthermore, in Section 5, we prove that the defect is a block invariant for all exceptional cyclotomic Hecke algebras studied in \cite{CM}. These are the only cyclotomic Hecke algebras associated with exceptional complex reflection groups for which the decomposition matrices have been explicitly calculated (besides \cite{GP} where only real groups were considered). Our proof is computational and is based on a GAP3 program that we created to calculate all possible blocks and the defects of the representations they contain.

Finally, in Section 6, we generalise our results to the case of cyclotomic Yokonuma--Hecke algebras, introduced in \cite{CPA2} as generalisations of the Yokonuma--Hecke algebra of type $A$ and of the Ariki--Koike algebras. Through their connection to Ariki--Koike algebras, we are able to prove that the defect is a block invariant for these algebras as well. We conclude by  pondering the question whether the defect being a block invariant is a  property of essential algebras, that is, algebras whose Schur elements have a specific form (see Definition \ref{def-essential}) and include Hecke algebras and Yokonuma--Hecke algebras as particular cases. This is why, in Section 2, which contains all preliminary notions needed for  understanding this paper, we discuss Hecke algebras in the general context of essential algebras (which were in fact introduced in \cite{C} in order again to study the blocks of Hecke algebras in a more general context).

\section{Schur elements for Hecke algebras} 

In this section, we will give a quick overview of the theory of symmetric algebras and the related results on Hecke algebras. 
 
\subsection{Symmetric algebras}\label{sec2.1}
 Let $R$ be a commutative integral domain and let $A$ be an $R$-algebra, free and finitely generated as an $R$-module.  If $R'$ is a commutative integral domain containing $R$, we will write $R'A$ for $R' \otimes_R A$ 
and we will denote by $\mathrm{Irr}(R'A)$ the set of irreducible representations of $R'A$. 

Let $\mathcal{B} = (b_i)_{i \in I}$ be an $R$-basis of $A$.
A {\em symmetrising trace} on the algebra $A$ is a linear map $\tau : A \rightarrow R$ such that
the bilinear form $A \times A \rightarrow R,\, (a,b)\mapsto \tau(ab)$ is symmetric and non-degenerate, that is, the matrix $(\tau(b_ib_j))_{b_i,b_j \in \mathcal{B}}$ is symmetric and has a determinant in $R^\times$. If there exists a symmetrising trace on $A$, we say that $A$ is a {\em symmetric} algebra.

\begin{exa}{\rm Let $G$ be a finite group. The linear map $\tau: \Z[G] \rightarrow \Z$ defined by $\tau(1)=1$ and $\tau(g)=0$ for all $g \in G \setminus \{1\}$ is a symmetrising trace on $\Z[G]$; it is called the {\em canonical symmetrising trace} on $\Z[G]$.}
\end{exa}

Suppose that there exists a symmetrising trace $\tau$ on $A$.
Let $\mathcal{B}^\vee = (b_i^\vee)_{i \in I}$ denote the dual basis to $\mathcal{B}$ with  respect to $\tau$, which is uniquely determined by the property $\tau(b_i b_j^\vee) = \delta_{ij}$. 
Let $K$ be a field containing $R$ such that the algebra $KA$ is split. 
The map $\tau$ can be extended to $KA$ by extension of scalars. 
Let $V \in \Irr(KA)$ with  character $\chi_V$. The element $\sum_{i \in I} \chi_V(b_i)\, b_i^\vee$ belongs to the centre of $KA$ \cite[Lemma 7.1.7]{GP} and, by Schur's lemma, acts as a scalar on $V$. This scalar, denoted by $s_V$, is called 
the {\em Schur element} associated with $V$.  We have $s_V \in R_K$, where $R_K$ denotes the integral closure of $R$ in $K$  \cite[Proposition 7.3.9]{GP}. 
Moreover,  the Schur element $s_V$ satisfies the following equation (which also yields a formula for calculating $s_V$ when ${\rm char}K\not|\,\, \chi_V(1)$):
$$
s_V \,\chi_V(1) =\sum_{i \in I} \chi_V(b_i)\, \chi_V(b_i^\vee) \ .
$$
The algebra $KA$ is semisimple if and only if $s_V \neq 0$ for all $V \in \Irr(KA)$ \cite[Theorem 7.2.6]{GP}.
If this is the case, we  have (see \cite{CuRe} or \cite[Theorem 7.2.6]{GP}):
$$
\tau = \sum_{V \in \Irr(KA)}\frac{1}{s_V}\chi_V.
$$

\begin{exa}\label{group example}
{\rm Let $G$ be a finite group and let $\tau$ be the canonical symmetrising trace on $A:=\Z[G]$. 
The set $\{g\}_{g \in G}$  forms a basis of $A$ over $\Z$, with $\{g^{-1}\}_{g \in G}$ the dual basis of $A$ with respect to $\tau$.
If $K$ is an algebraically closed field of characteristic $0$, then $KA$ is a split semisimple algebra and $s_V = |G|/\chi_V(1) \in \Q$ for all $V \in \Irr(KA)$. Because of the integrality of the Schur elements, we must have $|G|/\chi_V(1) \in  \Z_K \cap \Q = \Z$ for all $V \in \Irr(KA)$. Thus, we have also recovered the fact that $\chi_V(1)$ divides $|G|$ in $\Z$.}
\end{exa}

From now on, we assume that $R$ is integrally closed in $K$ and that $KA$ is split semisimple.
Let $\theta : R \rightarrow L$ be a ring homomorphism into a field $L$ such that $L$ 
is the field of fractions of $\theta(R)$. We call such a ring homomorphism a {\em specialisation} of $R$. 
Let us also assume that the algebra $LA:=L \otimes_R A$ is split. 
We then obtain a decomposition matrix $D_\theta=([V:M])_{V \in \Irr(KA),\,M \in \Irr(LA)}$ that records how the simple modules of $KA$ decompose after the specialisation $\theta$ (for the definition of the decomposition matrix, the reader may refer to \cite[Theorem 7.4.3]{GP}). 
The \emph{Brauer graph} associated with $\theta$ has vertices labelled by $\Irr(KA)$ and an edge joining $V,\,V' \in \Irr(KA)$ if there exists  $M\in \Irr(LA)$ such that $[V:M] \neq 0$ and  $[V':M]\neq  0$. A connected component of the Brauer graph is called a $\theta$-\emph{block}.

Since $L$ is a field, the kernel of $\theta$ is a prime ideal of $R$. Let $\mathcal{O}=\{a/b\,|\, a,b \in R,\,\theta(b) \neq 0\}$ be the localisation of $R$ at ${\rm Ker} \theta$. We say that $\theta$ is a \emph{principal specialisation} if $\mathcal{O}$ is a discrete valuation ring. Note that $\theta$ extends naturally to a map from $\mathcal{O}$ to the residue field of $\mathcal{O}$ and that the decomposition matrix does not change if we consider our algebra $A$ over $\mathcal{O}$ instead of $R$. If $\Phi$ is a generator of the maximal ideal of $\mathcal{O}$, we will write ${\nu}_\Phi: K^{\times} \rightarrow \mathbb{Z}$ for the exponantial valuation associated with $\mathcal{O}$. Thus, any $x \in K^{\times}$ can be written uniquely in the form $x=\Phi^{{\nu}_{\Phi}(x)}u$, where $u \in \mathcal{O}^\times$. Furthermore, we have $\mathcal{O}=\{x \in K^\times\,|\, {\nu}_\Phi(x) \geq 0\} \cup \{0\}$.

Let $V \in \Irr(KA)$. Recall that $s_V \neq 0$. The integer ${\nu}_\Phi(s_V)$ is called the \emph{$\Phi$-defect} of $V$. 
We define the $\Phi$-defect of a  $\theta$-block $B$ to be 
the highest among the values of $\Phi$-defects of the elements of $B$. The following result about blocks of $\Phi$-defect $0$ is a consequence of \cite[Theorem 7.5.11]{GP} and \cite[Proposition 4.4]{GR}:

\begin{Th} {\bf (Blocks of defect 0)} Let $V \in \Irr(KA)$. We have that $\theta(s_V) \neq 0$ if and only if $\{V\}$ is a $\theta$-block and the corresponding decomposition matrix block is the $(1 \times 1)$-matrix with entry $1$.
\end{Th}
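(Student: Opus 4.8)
The plan is to pass to the discrete valuation ring $\mathcal{O}$ (which, as noted above, leaves the decomposition matrix unchanged) and to argue through the primitive central idempotent $e_V$ of $KA$ attached to $V$, for which one has $e_V=s_V^{-1}\sum_{i\in I}\chi_V(b_i)\,b_i^\vee$, since $\sum_{i\in I}\chi_V(b_i)\,b_i^\vee$ is the central element acting by $s_V$ on $V$ and by $0$ on every other simple module. Observe that $\theta(s_V)\neq 0$ is equivalent to $\nu_\Phi(s_V)=0$, that is, to $s_V\in\mathcal O^\times$; so the statement says precisely that blocks of $\Phi$-defect $0$ are singletons with trivial decomposition.

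For the implication ``$\theta(s_V)\neq 0\Rightarrow$ defect-$0$ block'': since the coordinates of the $\chi_V(b_i)$ and of the $b_i^\vee$ lie in the integral closure of $R$ in $K$, hence in $\mathcal O$, the formula for $e_V$ shows $e_V\in\mathcal OA$ as soon as $s_V$ is a unit; being already primitive in $KA$, it is then a primitive central idempotent of $\mathcal OA$. By \cite[Thm.~7.5.11]{GP} together with \cite[Prop.~4.4]{GR}, which describe the $\theta$-blocks through the primitive central idempotents of $\mathcal OA$ and their reductions, this forces the $\theta$-block of $V$ to be $\{V\}$, with corresponding block of $LA$ equal to $LA\bar e_V$, the reduction of the $\mathcal O$-order $\mathcal OAe_V\subseteq KAe_V\cong\operatorname{Mat}_{\chi_V(1)}(K)$. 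On this summand $\tau$ restricts to $s_V^{-1}\chi_V$, which (as $s_V$ is a unit) reduces to a symmetrising trace on $LA\bar e_V$ whose single Schur element is $\theta(s_V)\neq 0$; by the semisimplicity criterion (Theorem~7.2.6) $LA\bar e_V$ is then split semisimple, so the order $\mathcal OAe_V$ is maximal, whence $\mathcal OAe_V\cong\operatorname{Mat}_{\chi_V(1)}(\mathcal O)$ and $LA\bar e_V\cong\operatorname{Mat}_{\chi_V(1)}(L)$. The latter has a unique simple module $M$, of $L$-dimension $\chi_V(1)=\dim_L\bar V$, so $\bar V\cong M$ and the decomposition block is the $(1\times 1)$-matrix $(1)$.

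Conversely, assume $\{V\}$ is a $\theta$-block with decomposition block $(1)$. By the same results the block idempotent of $\mathcal OA$ attached to this $\theta$-block is precisely $e_V$, so $e_V\in\mathcal OA$ and $\mathcal OAe_V$ is an $\mathcal O$-order in $\operatorname{Mat}_{\chi_V(1)}(K)$. Since $\bar V$ is simple by hypothesis, the surjection $LA\bar e_V\twoheadrightarrow\operatorname{End}_L(\bar V)\cong\operatorname{Mat}_{\chi_V(1)}(L)$ is an isomorphism for dimension reasons, so $\mathcal OAe_V$ has semisimple reduction and is therefore maximal, i.e. $\mathcal OAe_V\cong\operatorname{Mat}_{\chi_V(1)}(\mathcal O)$. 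Every symmetrising trace on $\operatorname{Mat}_{\chi_V(1)}(\mathcal O)$ is a unit multiple of the ordinary trace, whose Schur element is $1$; hence the Schur element of $V$, namely $s_V$, lies in $\mathcal O^\times$, that is, $\theta(s_V)\neq 0$.

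The only genuine obstacle is the structural input that the $\theta$-blocks are governed by the primitive central idempotents of $\mathcal OA$ in a way compatible with reduction, together with the order-theoretic fact that an order in a split simple $K$-algebra with semisimple reduction is maximal; the former is exactly what \cite[Thm.~7.5.11]{GP} and \cite[Prop.~4.4]{GR} supply, and the remainder is bookkeeping with integrality of character values and with traces on matrix algebras.
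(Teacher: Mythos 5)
The paper gives no argument of its own here: it derives the statement directly from \cite[Theorem 7.5.11]{GP} and \cite[Proposition 4.4]{GR}. Your write-up keeps those references for the identification of $\theta$-blocks with the primitive central idempotents of $\mathcal{O}A$, which is legitimate, and your converse direction is essentially sound in outline. The genuine problem is in the forward direction. Having shown $e_V\in\mathcal{O}A$, you deduce semisimplicity of $LA\bar e_V$ from the assertion that the reduction of $\tau|_{KAe_V}=s_V^{-1}\chi_V$ is a symmetrising trace on $LA\bar e_V$ ``whose single Schur element is $\theta(s_V)\neq 0$''. This is circular: Schur elements are attached to the simple modules of $LA\bar e_V$, and at this stage you do not know what those are --- that $LA\bar e_V$ has a unique simple module, namely $\bar V$ (equivalently, that the decomposition block is $(1)$), is precisely what you are trying to prove. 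If the reduced block algebra were not semisimple, its Schur elements with respect to the reduced trace could perfectly well be $0$; nothing identifies them with $\theta(s_V)$ a priori. Hence the semisimplicity criterion of \cite[Theorem 7.2.6]{GP} cannot be invoked there, and your chain ``semisimple reduction $\Rightarrow$ maximal order $\Rightarrow\operatorname{Mat}_{\chi_V(1)}(\mathcal{O})$'' has no starting point.

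The gap is fixable without the semisimplicity detour: $\mathcal{O}Ae_V$ is self-dual inside $KAe_V\cong\operatorname{Mat}_{\chi_V(1)}(K)$ for the form $(x,y)\mapsto s_V^{-1}\operatorname{tr}(xy)$ (the $\tau$-dual basis projects into the block, since the decomposition by the central idempotent $e_V$ is $\tau$-orthogonal); any maximal order containing it is conjugate to $\operatorname{Mat}_{\chi_V(1)}(\mathcal{O})$, hence self-dual for a unit multiple of $\operatorname{tr}$, and duality reverses the inclusion, forcing equality. This yields $\mathcal{O}Ae_V\cong\operatorname{Mat}_{\chi_V(1)}(\mathcal{O})$ directly, and the rest of your forward argument then goes through. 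Alternatively, the forward implication is literally the content of \cite[Theorem 7.5.11]{GP}, so you could quote it outright, as the paper does. Two further cautions: the implication ``semisimple reduction of an order over a discrete valuation ring implies maximality'', which your converse also uses, is a real theorem from the theory of (hereditary) orders and should be cited, not dismissed as bookkeeping; the remaining steps of the converse (the block idempotent being $e_V$, the dimension count giving $LA\bar e_V\cong\operatorname{End}_L(\bar V)$, and symmetrising forms on $\operatorname{Mat}_{\chi_V(1)}(\mathcal{O})$ being unit multiples of the matrix trace) are fine.
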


In particular, we recover the following known semisimplicity criterion for the algebra $LA$ \cite[Theorem 7.4.7]{GP}: $LA$ is semisimple if and only if $\theta(s_V) \neq 0$ for all $V \in \Irr(KA)$. If this is the case, Tits's deformation theorem (see, for example, \cite[Theorem 7.4.6]{GP}) yields a bijection between $ \Irr(KA)$ and $ \Irr(LA)$.

 \subsection{Essential algebras}\label{sec-essential} The notion of essential algebras was introduced by the first author in \cite{C} in order to study the block theory of Hecke algebras in positive characteristic. Let 
 $A$ be an $R[\textbf{x},\textbf{x}^{-1}]$-algebra, where $R$ is a Noetherian integrally closed domain and $\textbf{x}=(x_j)_{0 \leq j \leq m-1}$ is a set of $m$ indeterminates over $R$. We assume that $A$ is free and finitely generated as an $R[\textbf{x},\textbf{x}^{-1}]$-module and that $A$ is symmetric. We also assume that the algebra
 $K(\textbf{x})A$ is split semisimple, where $K$ is the field of fractions of $R$. 

\begin{Def}\label{def-essential}
We say that the algebra $A$ is \emph{essential} if, for each $V \in \textrm{Irr}(K(\textbf{x})A)$, 
the Schur element $s_V(\textbf{x})$ is of the form
\begin{equation}\label{schurfactors}
s_V(\textbf{x})= \xi_V N_V \prod_{i \in I_V} \Psi_{V,i}(M_{V,i})^{n_{V,i}}
\end{equation}
where
\begin{enumerate}[(a)]
    \item $\xi_V$ is an element of $R \setminus\{0\}$,\smallbreak
    \item $N_V$ is a monomial in $R[\textbf{x},\textbf{x}^{-1}]$,\smallbreak
    \item $I_V$ is a finite index set,\smallbreak
    \item $(\Psi_{V,i})_{i \in I_V}$ is a family of  monic polynomials  in one variable with coefficients in $R$, irreducible over $K$, such that $\Psi_{V,i}(0) \in R^\times$,\smallbreak
        \item $(M_{V,i})_{i \in I_V}$ is a family of \emph{primitive} monomials in $R[\textbf{x},\textbf{x}^{-1}]$, that is, if $M_{V,i} = \prod_{j=0}^{m-1} x_j^{a_j}$, then $\textrm{{gcd}}(a_j)=1$,\smallbreak
     \item  ($n_{V,i})_{i \in I_V}$ is a family of positive integers.
    \end{enumerate}
    \end{Def}
    
By \cite[Theorem 1.5.6]{C}, the Laurent polynomials $ \Psi_{V,i}(M_{V,i})$ are irreducible in $K[\textbf{x},\textbf{x}^{-1}]$, so Equation \eqref{schurfactors} yields the (unique) factorisation of $s_V(\textbf{x})$ inside that ring. What is more, the monomials $(M_{V,i})_{i \in I_V}$ are unique up to inversion \cite[Proposition 3.1.2]{C}.

Let $y$ be another indeterminate over $R$, and let $\varphi: R[\textbf{x},\textbf{x}^{-1}] \rightarrow R[y,y^{-1}]$  be an $R$-algebra morphism  such that $\varphi(x_j)=y^{r_j}$, where $r_j \in \Z$ for all $j=0,1,\ldots,m-1$.  We denote by $A_\varphi$ the algebra obtained as a specialisation of $A$ via the morphism $\varphi$.
As long as $\varphi(s_V(\textbf{x}))\neq 0$ for all $V \in \textrm{Irr}(K(\textbf{x})A)$, the algebra 
$K(y)A_\varphi$ is split semisimple (this is quite straightforward, but one can also refer to the proof of \cite[Proposition 4.3.3]{C}). If this is the case, then, by Tits's deformation theorem, we have a bijection between
$\Irr(K(\textbf{x})A)$ and $\Irr(K(y)A_\varphi)$.
Moreover, the specialisation of the symmetrising trace of $A$ via $\varphi$ is a symmetrising trace on $A_\varphi$. In particular, the algebra $A_\varphi$ is essential.
If we now consider a specialisation $\theta$ of $A_\varphi$ as in \S \ref{sec2.1}, then the representation theory of the specialised algebra becomes ``interesting'' when $\theta(y)$ is a root of one of the irreducible factors of the Schur elements of $A_\varphi$ (but not only then).

\begin{Rem}
In fact, the algebra $K(y)A_\varphi$ is not semisimple if and only if 
$\varphi(\Psi_{V,i}(M_{V,i}))=0$ for some $V \in \textrm{Irr}(K(\textbf{x})A)$ and some $i \in I_V$. This in turn can happen if and only if $\varphi(M_{V,i})=1$ and $\Psi_{V,i}(1) =0$. This is why in our original definition of essential algebras \cite[Definition 3.1.1]{C}, we also asked that the polynomials $\Psi_{V,i}$ satisfy $\Psi_{V,i}(1) \neq 0$.
\end{Rem}

\subsection{Hecke algebras}\label{sec-Hecke}

Let $\mathcal{V}$ be a finite dimensional complex vector space. A \emph{pseudo-reflection} is a non-trivial element $s \in  \mathrm{GL}(\mathcal{V})$ that fixes a hyperplane pointwise,
that is, ${\rm dim}({\rm Ker}(s - {\rm id}_\mathcal{V}))={\rm dim}(\mathcal{V})-1$. The hyperplane ${\rm Ker}(s - {\rm id}_\mathcal{V})$ is the \emph{reflecting hyperplane} of $s$.
A {\em complex reflection group} is a finite subgroup of $\mathrm{GL}(\mathcal{V})$ generated by pseudo-reflections. If $W$ acts irreducibly on $\mathcal{V}$, then $W$ is called an irreducible complex reflection groupe and the dimension of $\mathcal{V}$ is called the {\em rank} of $W$. Every complex reflection group is isomorphic to a direct product of irreducible complex reflection groups; the latter have been classified by Shephard and Todd \cite{ShTo}:

\begin{Th}\label{ShToClas} Let $W \subset \mathrm{GL}(\mathcal{V})$ be an irreducible complex
reflection group. Then 
\begin{itemize}
  \item either $(W,\mathcal{V}) \cong (G(l,p,n),\C^{n-\delta_{l,1}})$, where $(l,p,n,l/p) \in (\N^*)^4 \setminus \{(2,2,2,1)\}$ and $G(l,p,n)$ is the group of all 
  $n \times n$ monomial matrices whose non-zero entries are ${l}$-th roots of unity, while the product of all non-zero
  entries is an $(l/p)$-th root of unity. \smallbreak
  \item $(W,\mathcal{V})$ is isomorphic to one of the 34 exceptional groups
  $G_n$ $(n=4,\ldots,37)$.
\end{itemize}
\end{Th}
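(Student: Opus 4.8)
This is a classical and substantial classification, due to Shephard and Todd \cite{ShTo} (with later reorganisations by Cohen and by Lehrer--Taylor); I only indicate the structure one would follow. By the reduction recalled above, it suffices to classify \emph{irreducible} $W\subset \mathrm{GL}(\mathcal{V})$. If $\dim\mathcal{V}=1$, then every pseudo-reflection is multiplication by a root of unity $\neq 1$, so $W=\mu_l$ for some integer $l\geq 2$, which is $G(l,1,1)$ acting on $\C^1=\C^{1-\delta_{l,1}}$. From now on assume $n:=\dim\mathcal{V}\geq 2$ and split into the imprimitive and primitive cases.

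In the \emph{imprimitive} case, $W$ preserves a nontrivial decomposition $\mathcal{V}=V_1\oplus\cdots\oplus V_t$ with $t\geq 2$ which we take to be minimal (not refinable $W$-equivariantly); irreducibility forces $W$ to permute the $V_i$ transitively. The key first step is to show that each $V_i$ is a line: analysing how a pseudo-reflection (with its codimension-$1$ fixed space) can act on a block system, one finds that it either stabilises each $V_i$ or interchanges exactly two of them, and minimality together with the dimension constraint forces $\dim V_i=1$. Hence $W$ lies in the group of monomial matrices; set $A=W\cap\{\text{diagonal matrices}\}$, which is abelian and normal in $W$. Transitivity makes the diagonal entries of elements of $A$ range over a common finite subgroup $\mu_l\subset\C^\times$, the quotient $W/A$ embeds in $\mathfrak{S}_t$, and irreducibility forces $W/A=\mathfrak{S}_t$ (so $t=n$) and $A=\{\operatorname{diag}(\zeta_1,\ldots,\zeta_n):\zeta_i\in\mu_l,\ \prod_i\zeta_i\in\mu_{l/p}\}$ for some divisor $p\mid l$. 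This is exactly $G(l,p,n)$; the parameters $(l,p,n)=(2,2,2)$ are excluded because $G(2,2,2)$ stabilises the line $\C(1,\ldots,1)$ and hence acts reducibly, and the representation is $\C^n$ except when $l=1$ (the case $W=\mathfrak{S}_n=G(1,1,n)$ of type $A_{n-1}$), where one passes to the reflection representation $\C^{n-1}$, accounting for the $\delta_{l,1}$.

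The \emph{primitive} case is where the real work lies. Here one first invokes a Jordan-type finiteness statement: for each fixed $n$ there are, up to conjugacy, only finitely many primitive finite subgroups of $\mathrm{GL}_n(\C)$, so a priori only finitely many primitive complex reflection groups occur in each rank. To obtain the explicit list one proceeds rank by rank, using invariant theory (if $d_1,\ldots,d_n$ are the degrees of a system of fundamental invariants, then $|W|=\prod_i d_i$ and the number of pseudo-reflections in $W$ equals $\sum_i(d_i-1)$, which sharply constrains the possibilities) together with the classification of primitive finite subgroups of $\mathrm{GL}_2(\C)$, $\mathrm{GL}_3(\C)$, $\mathrm{GL}_4(\C)$ and the low-dimensional analysis following Blichfeldt. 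This singles out the central extensions of the binary polyhedral groups in rank $2$ (giving $G_4,\ldots,G_{22}$), the Klein, Hessian and Valentiner configurations together with $W(H_3)$ in rank $3$ (giving $G_{23},\ldots,G_{27}$), the groups $W(F_4)$, $W(H_4)$ and three further groups in rank $4$ (giving $G_{28},\ldots,G_{32}$), and finally $G_{33}$, $G_{34}$ together with the exceptional Weyl groups $W(E_6)=G_{35}$, $W(E_7)=G_{36}$, $W(E_8)=G_{37}$ in ranks $5$ and $6$; one then checks that each listed group is indeed generated by pseudo-reflections and that no further group occurs. The main obstacle, and the bulk of Shephard and Todd's argument, is precisely this low-rank analysis: enumerating all primitive finite subgroups of $\mathrm{GL}_2$, $\mathrm{GL}_3$ and $\mathrm{GL}_4$, identifying which of them are generated by reflections, and verifying completeness of the resulting list are each delicate, case-heavy computations.
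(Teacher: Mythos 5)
The paper does not prove this statement at all: it is quoted as the classical Shephard--Todd classification and simply cited to \cite{ShTo}. Your sketch correctly reproduces the standard strategy of that classification (rank one, then the imprimitive case yielding $G(l,p,n)$ with the $(2,2,2)$ and $\delta_{l,1}$ caveats, then the primitive case reduced to the case-by-case low-rank analysis of primitive reflection subgroups), so it is consistent with the cited source and with the level of detail one could reasonably expect here.
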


From now on, let $W$ be an irreducible complex reflection group. Let $\mathcal{A}$ be the set of reflecting hyperplanes of $W$ and let $\mathcal{V}^{\textrm{reg}}:= \mathcal{V}\setminus \bigcup_{H\in \mathcal{A}} H$. 
We define $P(W) := \pi_1(\mathcal{V}^{\textrm{reg}}, x_0)$ and $B(W) := \pi_1(\mathcal{V}^{\textrm{reg}}/W, x_0)$, where $x_0 \in \mathcal{V}^{\textrm{reg}}$ is some fixed basepoint, to be respectively the {\em pure braid group} and
the {\em braid group} of $W$. 
It is known by \cite[Theorem 12.8]{Bes2} that
the centre of $B(W)$ is cyclic, generated by some element $\boldsymbol{\rm z}$. We set $\boldsymbol{\pi}: = \boldsymbol{\rm z}^{|Z(W)|} \in P(W)$, where $Z(W)$ denotes the centre of $W$.
For every orbit $\mathcal{C}$ of the action of $W$ on $\mathcal{A}$, let
$e_{\mathcal{C}}$ be the common order of the subgroups $W_H$, where $H$
is any element of $\mathcal{C}$ and $W_H$ is the pointwise stabiliser of $H$. Note that $W_H$ is cyclic, for all $H \in \mathcal{A}$. Set 
$R:=\mathbb{Z}[\textbf{u},\textbf{u}^{-1}]$ to be the Laurent polynomial ring
in a set of indeterminates $\textbf{u}=(u_{\mathcal{C},j})_{(\mathcal{C} \in
\mathcal{A}/W)(0\leq j \leq e_{\mathcal{C}}-1)}$. The \emph{generic
Hecke algebra} \index{generic Hecke algebra} $\mathcal{H}(W)$ of $W$ is  the quotient of the group
algebra $R[B(W)]$ by the ideal
generated by the elements of the form
$$(s-u_{\mathcal{C},0})(s-u_{\mathcal{C},1}) \cdots (s-u_{\mathcal{C},e_{\mathcal{C}}-1}),$$
where $\mathcal{C}$ runs over the set $\mathcal{A}/W$ and
$s$ runs over the set of monodromy generators around 
the images in $\mathcal{V}^{\textrm{reg}}/W$ of 
the elements of $\mathcal{C}$ (see \cite[\S 2]{BMR} for their definition).

This definition of generic Hecke algebras of complex reflection groups is due to Brou\'e, Malle and Rouquier,  who also conjectured that the algebra 
$\mathcal{H}(W)$ is a free $R$-module of rank  $|W|$ \cite[\S 4]{BMR}. This conjecture is known as the ``BMR freeness conjecture'', and was then known to hold for the real reflection groups \cite[IV, \S 2]{Bou05}, the groups $G(l,p,n)$ \cite{ArKo, BM, Ar} and $G_4$ \cite{BM}. The proof of this conjecture for the exceptional irreducible complex reflection groups was completed only a couple of years ago, thanks to the works of Chavli \cite{Ch18, Ch17}, Marin \cite{Mar41, Mar43, MarNew}, Marin--Pfeiffer \cite{MaPf} and Tsuchioka \cite{Tsu} .

Let now $K$ be the field generated by the traces on $\mathcal{V}$ of all the elements of $W$. 
Benard \cite{Ben} and Bessis \cite{Bes1} have proved, using a case-by-case
analysis, that $K$ is a splitting field for $W$. The field $K$  is called the \emph{field of
definition} of $W$.  If $K \subseteq \mathbb{R}$, then $W$ is a finite Coxeter group, and
 if $K=\mathbb{Q}$, then $W$ is a Weyl group.
Malle \cite[5.2]{Ma4} has shown that, given that the BMR freeness conjecture holds, we can always find $N_W \in \Z_{>0}$ such that  if we take $\textbf{{v}}:=(v_{\mathcal{C},j})_{(\mathcal{C} \in
\mathcal{A}/W)(0\leq j \leq e_{\mathcal{C}}-1)}$ defined by
\begin{equation}\label{NW}
v_{\mathcal{C},j}^{N_W}:= \eta_{e_\mathcal{C}}^{-j}u_{\mathcal{C},j} ,
\end{equation}
where 
$\eta_{e_\mathcal{C}}:=\exp(2\pi i/e_\mathcal{C})$, then the 
$K(\textbf{{v}})$-algebra
$K(\textbf{{v}})\mathcal{H}(W)$ is split semisimple.
 Taking $N_W$ to be the number of roots of unity in $K$ works every time, but sometimes it is enough to take $N_W$ to be even as small as $1$ (for example, if $W=G(l,1,n)$ or $W=G_4$).
Following Tits's deformation theorem,  the specialisation $v_{\mathcal{C},j}\mapsto 1$ induces a
bijection between
$\mathrm{Irr}(K(\textbf{v})\mathcal{H}(W))$ and the set $\mathrm{Irr}(W)$ of irreducible representations of $W$. 

The following conjecture is \cite[2.1]{BMM}:

\begin{conj}\label{BMM sym}\
\textbf{``The BMM symmetrising trace conjecture''} There exists a symmetrising trace $\tau: \mathcal{H}(W) \rightarrow R$ that satisfies the following two conditions:
\begin{enumerate}
\item $\tau$ specialises to the canonical symmetrising trace on the group algebra of $W$ when  $u_{\mathcal{C},j}\mapsto \eta_{e_\mathcal{C}}^{j}$. \smallbreak 
\item  $\tau$ satisfies
$$
\tau(T_{\beta^{-1}})^* =\frac{\tau(T_{\beta\boldsymbol{\pi}})}{\tau(T_{\boldsymbol{\pi}})} \quad \text{ for all } \beta \in B(W),
$$
where $\beta \mapsto T_{\beta}$ denotes the natural surjection $R[B(W)] \rightarrow \mathcal{H}(W)$ and
$x \mapsto x^*$ the automorphism of  $R$ given by $\textbf{\em u} \mapsto \textbf{\em u}^{-1}$.
\smallbreak
\end{enumerate}
\end{conj}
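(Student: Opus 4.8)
The plan is to reduce to irreducible complex reflection groups and then argue along the Shephard--Todd classification (Theorem \ref{ShToClas}). Since for a direct product $W=W_1\times W_2$ the generic Hecke algebra $\mathcal{H}(W)$ is the tensor product $\mathcal{H}(W_1)\otimes_R\mathcal{H}(W_2)$, and both the existence of a symmetrising trace and the Schur elements are multiplicative under tensor products, it suffices to produce the trace $\tau$ (together with the verification of conditions (1) and (2)) for each irreducible $W$. This splits the problem into three families: the real reflection groups, the infinite series $G(l,p,n)$, and the $34$ exceptional groups $G_4,\ldots,G_{37}$.

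For the real reflection groups one takes the standard $R$-basis $\{T_w\}_{w\in W}$ coming from a reduced-word description of $B(W)$ and sets $\tau(T_w)=\delta_{w,1}$; symmetry of $(a,b)\mapsto\tau(ab)$ follows from the anti-automorphism $T_w\mapsto T_{w^{-1}}$ together with the quadratic relations, and non-degeneracy is immediate because the dual basis is $\{T_{w^{-1}}\}$ up to units. Condition (1) holds by construction, and condition (2) is checked using that the image of $\boldsymbol{\pi}=\boldsymbol{\rm z}^{|Z(W)|}$ is central in $\mathcal{H}(W)$, hence acts on each $V\in\Irr(K(\textbf{v})\mathcal{H}(W))$ by an explicit scalar, combined with the semisimple expansion $\tau=\sum_{V}s_V^{-1}\chi_V$ over $K(\textbf{v})$, which turns both sides of the identity in (2) into a comparison of character values. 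For $G(l,1,n)$ (the Ariki--Koike algebras) one builds a candidate trace from a basis adapted to the cellular structure, reads off its values with the help of the Schur element formula of \cite{CJ}, and verifies (1) and (2) directly; the passage to $G(l,p,n)$ is then carried out by Clifford theory, descending $\tau$ along the index-$p$ subalgebra and controlling the behaviour on the blocks that split.

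The genuinely hard part is the exceptional non-real groups, where no uniform structural construction of $\tau$ is available and the approach becomes case-by-case and computational: for each $G_n$ one fixes the explicit presentation of $B(G_n)$ and a monomial $R$-basis of $\mathcal{H}(G_n)$ certified by the (now proven) BMR freeness conjecture, and one solves the linear system encoding symmetry and non-degeneracy of $(a,b)\mapsto\tau(ab)$ under the normalisation $\tau(1)=1$ and the constraints imposed by (2) on the generators, with existence \emph{and} uniqueness of the solution being a finite but large computation. The main obstacle is exactly this lack of a conceptual argument: each exceptional group needs its own basis and its own verification, and for the largest cases (such as $G_{34}$) even exhibiting a workable basis and running the check is delicate. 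This is why the conjecture is still open in general, and why in the present paper the defect results in exceptional type are only obtained for those groups whose blocks have already been determined in \cite{CM} rather than as a corollary of a full resolution of Conjecture \ref{BMM sym}.
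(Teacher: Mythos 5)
There is a fundamental problem here: the statement you are asked about is Conjecture \ref{BMM sym}, which the paper does \emph{not} prove --- it is an open conjecture of Brou\'e--Malle--Michel, quoted for context, and the paper explicitly lists the very small set of non-real groups for which it is currently known ($G_4$--$G_8$, $G_{12}$, $G_{13}$, $G_{22}$, $G_{24}$ and the cyclic groups $G(l,1,1)$), together with the real reflection groups. Your text is a programme rather than a proof, and you concede as much yourself in the final sentences about the exceptional groups; a reduction to the irreducible case plus an admission that the hard cases are ``a finite but large computation'' that has not been carried out does not establish the statement.

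Beyond that, one of the steps you do claim is contrary to what is actually known. For $G(l,1,n)$ you assert that one builds a trace and ``verifies (1) and (2) directly,'' with the passage to $G(l,p,n)$ by Clifford theory. In fact, as the paper states, a symmetrising trace satisfying Condition (1) is known for all $G(l,p,n)$ (by Bremke--Malle and Malle--Mathas), but it is precisely Condition (2) --- the identity $\tau(T_{\beta^{-1}})^* = \tau(T_{\beta\boldsymbol{\pi}})/\tau(T_{\boldsymbol{\pi}})$ --- that remains unverified for these algebras; this is one of the open points of the conjecture, not a routine check. Your sketch for the real case is essentially the classical argument and is fine as far as it goes, but it cannot be extended as written, and the paper itself is careful to make its results (block invariance of the defect) independent of Conjecture \ref{BMM sym}: the Schur elements with respect to the canonical trace are known for all complex reflection groups by other means, which is all that is used.
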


If $\tau$ exists, then $\tau$ is unique \cite[2.1]{BMM}, and it is called the \emph{canonical symmetrising trace on} $\mathcal{H}(W)$. 
When Brou\'e, Malle and Michel stated the above conjecture, it was known to hold
for real reflection groups, while a symmetrising trace satisfying Condition (1) existed for the complex reflection groups of the infinite series $G(l,p,n)$ (defined by Bremke and Malle in \cite{BreMa} and shown to be non-degenerate over $R$ by Malle and Mathas in \cite{MaMa}).
 The non-real exceptional complex reflection groups  for which the BMM symmetrising trace conjecture is known to hold are: 
\begin{itemize}
\item $G_4$ \cite{MM10, Mar46, BCCK} (3 independent proofs),  \smallbreak
\item $G_5$, $G_6$, $G_7$, $G_8$ \cite{BCCK},\smallbreak
\item  $G_{12}$, $G_{22}$, $G_{24}$ \cite{MM10}, \smallbreak
\item $G_{13}$ \cite{BCC}.
\end{itemize}

Nevertheless, the Schur elements with respect to the (conjectural) canonical symmetrising trace have been determined for all complex reflection groups. For real reflection groups, a complete list of bibliographical references can be found in \cite[\S 2.2]{HDR}. For the groups of type $G(l,1,n)$, two independent descriptions of the Schur elements with respect to the symmetrising  trace of Bremke and Malle have been given by Geck--Iancu--Malle \cite{GIM} and Mathas \cite{Mat}, whereas a simpler description has been subsequently given in \cite{CJ} using Mathas's formula. 
Moreover, Geck, Iancu and Malle have shown that these Schur elements satisfy a certain palindromicity property \cite[Theorem 5.2]{GIM}, which, by \cite[Lemma 2.7]{BMM}, amounts to proving Condition (2) of the BMM symmetrising trace conjecture.
From the Schur elements of $G(l,1,n)$, one recovers the Schur elements of $G(l,p,n)$ when $n > 2$ or $n = 2$ and $p$ is odd, with the use of Clifford theory. 
All remaining irreducible complex reflection  groups have been dealt with by Malle: the groups of rank $2$ in \cite{Ma2}, the groups of superior rank in \cite{Ma5}. 

Using a case-by-case analysis, we have shown that the algebra $\mathcal{H}(W)$, defined over the ring $\mathbb{Z}_{K}[\textbf{v},\textbf{v}^{-1}]$, is essential \cite[Theorem 4.2.6]{C}. In fact, the irreducible polynomials $\Psi_{V,i}$ appearing in the factorisation of the Schur elements of $\mathcal{H}(W)$ are always $K$-cyclotomic polynomials, that is, minimal polynomials of roots of unity over the field $K$ \cite[Theorem 4.2.5]{C}. They also satisfy $\Psi_{V,i}(1) \neq 0$.

%


Let $y$ be an indeterminate, and let $\varphi: \mathbb{Z}_{K}[\textbf{v},\textbf{v}^{-1}] \rightarrow \mathbb{Z}_{K}[y,y^{-1}]$   be a $\Z_K$-algebra morphism such that $\varphi(v_{\mathcal{C},j})=y^{r_{\mathcal{C},j}}$, where $r_{\mathcal{C},j} \in \Z$ for all $\mathcal{C},j$. We call $\varphi$ a \emph{cyclotomic specialisation} of $\mathcal{H}(W)$ and the algebra $\mathcal{H}_\varphi(W)$ a  {\em cyclotomic Hecke algebra}. By \cite[Proposition 4.3.3]{C}, the algebra $K(y)\mathcal{H}_\varphi(W)$ is 
split semisimple (since none of the Schur elements is mapped to $0$ by $\varphi$), and by Tits's deformation theorem, 
we have: 
$$\begin{array}{ccc}
    \textrm{Irr}(K(\textbf{v})\mathcal{H}(W)) & \leftrightarrow & \textrm{Irr}(K(y)\mathcal{H}_\varphi(W)). 
  \end{array}$$
  Furthermore, the
specialisation $y \mapsto 1$ yields a bijection:
  $$\begin{array}{ccc}
   \textrm{Irr}(K(y)\mathcal{H}_\varphi(W)) & \leftrightarrow & \textrm{Irr}(W). 
  \end{array}$$
  
 \begin{Rem}
Let $\zeta \in K$ be a root of unity and set $x:=\zeta y$. The cyclotomic specialisation $\varphi$ can be also given by  $\varphi(v_{\mathcal{C},j})=(\zeta^{-1}x)^{r_{\mathcal{C},j}}$ for all $\mathcal{C},j$. Then the algebra $\mathcal{H}_\varphi(W)$ can be regarded as a $\mathbb{Z}_{K}[x,x^{-1}]$-algebra, which specialises to the group algebra of $W$ when $x \mapsto \zeta$. If that is the case, we refer to $\varphi$ as a 
$\zeta$-\emph{cyclotomic specialisation}. 
\end{Rem}

The specialisation of the canonical symmetrising trace on $\mathcal{H}(W)$ is a symmetrising trace on $\mathcal{H}_\varphi(W)$, whose Schur elements are the images of the Schur elements of $\mathcal{H}(W)$ via $\varphi$. Therefore, the form of the Schur elements of $\mathcal{H}_\varphi(W)$ is given by the following proposition \cite[Proposition 4.3.5]{C}:

\begin{Prop}\label{Schur element cyclotomic}
Let $V \in \Irr(W)$. The associated  Schur element $s_V(y)$ of $\mathcal{H}_\varphi(W)$
is of the form
\begin{equation}\label{phifactors}
s_V(y)=\xi_V y^{a_V} \prod_{\Phi \in
C_V}\Phi(y)^{n_{V,\Phi}}
\end{equation}
 where $\xi_V \in
\mathbb{Z}_K$, $a_V\in \mathbb{Z}$, $C_V$ is a finite set of $K$-cyclotomic polynomials, and $n_{V,\Phi} \in
\mathbb{N}$.
\end{Prop}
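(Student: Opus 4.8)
\textbf{Proof plan for Proposition \ref{Schur element cyclotomic}.}

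The strategy is to push the factorisation \eqref{schurfactors} of the essential algebra $\mathcal{H}(W)$ (valid over $\Z_K[\textbf{v},\textbf{v}^{-1}]$ by \cite[Theorem 4.2.6]{C}) through the cyclotomic specialisation $\varphi$. First I would recall the refined information from \cite[Theorem 4.2.5]{C}: for $\mathcal{H}(W)$ the irreducible polynomials $\Psi_{V,i}$ occurring in \eqref{schurfactors} are $K$-cyclotomic polynomials. So for each $V \in \Irr(W)$ we may write
\begin{equation*}
s_V(\textbf{v}) = \xi_V N_V \prod_{i \in I_V} \Psi_{V,i}(M_{V,i})^{n_{V,i}},
\end{equation*}
with $\xi_V \in \Z_K \setminus \{0\}$, $N_V$ a monomial in the $v_{\mathcal{C},j}$, each $\Psi_{V,i}$ a $K$-cyclotomic polynomial, and each $M_{V,i}$ a primitive monomial in the $v_{\mathcal{C},j}$. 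Applying the $\Z_K$-algebra morphism $\varphi$, which sends $v_{\mathcal{C},j} \mapsto y^{r_{\mathcal{C},j}}$, is then a matter of tracking what happens to each of the three types of factor.

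The element $\xi_V \in \Z_K$ is fixed by $\varphi$, so it contributes the constant $\xi_V \in \Z_K$ of \eqref{phifactors}. The monomial $N_V$ is sent to a monomial in $y$, hence to $y^{a_V}$ for some $a_V \in \Z$ obtained as a $\Z$-linear combination of the exponents of $N_V$ with the integers $r_{\mathcal{C},j}$; these two contributions combine into the leading term $\xi_V y^{a_V}$. For each $i \in I_V$, the primitive monomial $M_{V,i} = \prod v_{\mathcal{C},j}^{a_{\mathcal{C},j}}$ is mapped to $y^{d_{V,i}}$ where $d_{V,i} = \sum_{\mathcal{C},j} a_{\mathcal{C},j} r_{\mathcal{C},j}$. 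If $d_{V,i} = 0$ then $\varphi(\Psi_{V,i}(M_{V,i})) = \Psi_{V,i}(1)$, which is a nonzero element of $\Z_K$ by the last sentence before the statement (the $\Psi_{V,i}$ satisfy $\Psi_{V,i}(1) \neq 0$), so this factor is absorbed into $\xi_V$ — note that $\varphi(s_V(\textbf{v})) \neq 0$, as recorded in the discussion preceding the proposition, so $\xi_V$ stays nonzero. If $d_{V,i} \neq 0$, then $\varphi(\Psi_{V,i}(M_{V,i})) = \Psi_{V,i}(y^{d_{V,i}})$, and I would expand this: if $\zeta$ is a primitive $d$-th root of unity with $\Psi_{V,i}$ its minimal polynomial over $K$, then $\Psi_{V,i}(y^{d_{V,i}})$ factors over $K$ as a product of $K$-cyclotomic polynomials $\Phi(y)$, each an eigenpolynomial of a root of unity $\omega$ with $\omega^{d_{V,i}} = \zeta$. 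Each such $\Phi$ has nonzero constant term (it divides $y^{N}-1$ for a suitable $N$ and is not $y$ itself). Collecting all these $\Phi$ over all $i \in I_V$, with multiplicities, gives the set $C_V$ of $K$-cyclotomic polynomials and the exponents $n_{V,\Phi} \in \N$ appearing in \eqref{phifactors}. This yields the claimed shape.

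The one point requiring a little care — and the place I expect the reader might pause — is the claim that $\Psi_{V,i}(y^{d})$ is, up to a sign, a product of $K$-cyclotomic polynomials when $\Psi_{V,i}$ is itself $K$-cyclotomic; equivalently, that the splitting field of $y^d - 1$ over $K$ decomposes $\Psi_{V,i}(y^d)$ into factors each of which is the minimal polynomial over $K$ of some root of unity. This is standard Galois theory of cyclotomic extensions: the roots of $\Psi_{V,i}(y^d)$ are precisely the $d$-th roots of the roots of $\Psi_{V,i}$, all of which are roots of unity, and the $\mathrm{Gal}(\overline{K}/K)$-orbit of any root of unity $\omega$ consists of roots of unity of the same order, whose minimal polynomial is by definition the associated $K$-cyclotomic polynomial. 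Since we only need the qualitative form \eqref{phifactors} and not an explicit description of $C_V$, this is all that is needed; alternatively one can simply invoke that $\varphi$ restricted to $\Z_K[\textbf{v},\textbf{v}^{-1}]$ lands in $\Z_K[y,y^{-1}]$ and that the only irreducible factors of a Laurent polynomial in $\Z_K[y,y^{-1}]$ of the form $\Psi(y^d)$ with $\Psi$ $K$-cyclotomic are monomials and $K$-cyclotomic polynomials. Finally I would note that all exponents are nonnegative because $s_V(y)$, being a Schur element after dividing out the single monomial $y^{a_V}$, is a genuine polynomial in $y$ with nonzero constant term, which forces $n_{V,\Phi} \in \N$ and no $K$-cyclotomic factor other than those already collected.
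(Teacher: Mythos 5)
Your proposal is correct and follows essentially the same route as the paper, which simply observes that the Schur elements of $\mathcal{H}_\varphi(W)$ are the $\varphi$-images of the essential Schur elements of $\mathcal{H}(W)$ (with the $\Psi_{V,i}$ being $K$-cyclotomic and $\Psi_{V,i}(1)\neq 0$) and cites \cite[Proposition 4.3.5]{C}, whose proof is exactly your specialisation argument: constants and monomials give $\xi_V y^{a_V}$, factors with $\varphi(M_{V,i})=1$ are absorbed into $\xi_V$, and $\Psi_{V,i}(y^{d})$ splits over $K$ into $K$-cyclotomic polynomials. The only point to keep tidy is $d<0$, where a power of $y$ and the unit $\Psi_{V,i}(0)\in \Z_K^{\times}$ must be split off before factoring, which your alternative Laurent-polynomial phrasing already covers.
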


Let now $\eta$ be a non-zero complex number, and let $\theta: \mathbb{Z}_{K}[y,y^{-1}] \rightarrow K(\eta), y \mapsto \eta$ be a specialisation of $\mathbb{Z}_{K}[y,y^{-1}]$. The specialised algebra $K(\eta) \mathcal{H}_\varphi(W)$ is split. It is also semisimple unless $\eta$ is the root of one of the $K$-cyclotomic polynomials appearing in the factorisation of the Schur elements of $\mathcal{H}_\varphi(W)$. 
If $s_V(y)$ is a Schur element of $\mathcal{H}_\varphi(W)$,  with a  form given by Equation \eqref{phifactors}, and $\eta$ is a root of some $\Phi \in C_V$, then $n_{V,\Phi} = \nu_\Phi(s_V(y))$, the $\Phi$-defect of $V$. 
Furthermore, if $\Phi=\Phi_e$, the $e$-th cyclotomic polynomial over $\Q$ for some $e > 0$, then we may also refer to the $\Phi_e$-defect of $V$ as the $e$-\emph{defect} of $V$.

\begin{Rem}
We have that $s_V(1)$ is the Schur element of $V \in \Irr(W)$ with respect to the canonical symmetrising trace on $\Z[W]$ (see Example \ref{group example}). Thus, $s_V(1) \neq 0$ and $\Phi_1(y)$ is not a factor of $s_V(y)$. We deduce that  the $1$-defect of $V$ is $0$ for all $V \in \Irr(W)$.
\end{Rem}

Geck has shown that if $W$ is a finite Coxeter group and we are in the equal parameter case (that is, $r_{\mathcal{C},j}=r_{\mathcal{C'},j}$ for all $\mathcal{C},\mathcal{C}' \in \mathcal{A}/W$ and $j=0,1$), then any two representations that are in the same $\theta$-block have the same $\Phi$-defect. This is proved in \cite[Proposition 7.4]{G} for Weyl groups, using an analogous result for blocks of finite groups of Lie type \cite[1.4]{G90}, and it can be verified by inspection  for $H_3$ and $H_4$, using the explicit knowledge of the $\theta$-blocks given in \cite[Appendix F]{GP}. In \cite[Remark 3.3.16]{GJ}, it is conjectured that such a property may hold for all cyclotomic Hecke algebras associated to finite Coxeter groups. We will go one step further and conjecture that this property holds for all cyclotomic Hecke algebras associated to complex reflection groups.

\begin{conj}\label{our conj}
Let $V,V' \in  \Irr(W)$. If $V$ and $V'$ belong to the same $\theta$-block of $K(\eta) \mathcal{H}_\varphi(W)$, then they have the same $\Phi$-defect.
\end{conj}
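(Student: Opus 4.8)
The statement is Conjecture \ref{our conj}, and the excerpt's abstract and introduction make clear that the authors do not claim to prove it in full generality; rather, the paper establishes it for the infinite series $G(l,p,n)$ and for the exceptional types where blocks are known. So a realistic proposal is to explain how I would organise a proof in those cases and what a general strategy would look like. The first reduction I would make is to note that by the Blocks of defect $0$ theorem, a singleton block always has a representation of $\Phi$-defect $0$, so the content of the conjecture is entirely about non-singleton blocks. Within a non-singleton $\theta$-block $B$, I would try to show directly that $\nu_\Phi(s_V)$ is constant on $B$, i.e.\ that the multiplicity $n_{V,\Phi}$ appearing in Proposition \ref{Schur element cyclotomic} does not depend on $V$ as $V$ ranges over $B$.

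**The combinatorial route for $G(l,1,n)$.** For the Ariki--Koike algebras I would follow the strategy sketched in the introduction: use the Schur element formula of \cite{CJ} to rewrite $\nu_\Phi(s_{V^{\ulambda}})$ as a count of certain charged hook lengths of the $l$-partition $\ulambda$ that are divisible by $e$, where $\Phi = \Phi_e$. Passing through the Dipper--Mathas Morita equivalence \cite{DiMa} lets me assume the multicharge is arranged so that this count is exactly the number of charged hook lengths congruent to $0 \bmod e$. Then I would reinterpret this count on the abacus associated with $\ulambda$ and identify it with Fayers' weight \cite{F} of $\ulambda$. Since Fayers' weight is known to be a block invariant (two multipartitions in the same block have the same weight), this yields the conjecture for $G(l,1,n)$. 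The extension to $G(l,p,n)$ I would handle by Clifford theory, relating blocks and Schur elements of $\mathcal{H}_\varphi(G(l,p,n))$ to those of $\mathcal{H}_\varphi(G(l,1,n))$ and checking that the $\Phi$-defect is compatible with this correspondence. For the exceptional groups covered by \cite{CM}, the verification is finite: for each such $W$, each cyclotomic specialisation and each $\eta$, list the $\theta$-blocks and the defects of the representations they contain, and check equality — this is the computational part, best done by a GAP3 routine.

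**The main obstacle.** The crux of the $G(l,1,n)$ argument is the identification of the $e$-divisibility count of charged hook lengths with Fayers' weight. The combinatorial bookkeeping here is delicate: one must track how the charge shifts hook lengths, reconcile the two (a priori different) normalisations coming from the Schur element formula and from Fayers' definition, and push the identity through the abacus model where adding/removing beads corresponds to adding/removing rim hooks. Getting the abacus accounting exactly right — including the contribution of the ``empty'' part of the weight and the interplay with the Morita reduction — is where the real work lies; once that identity is in hand, block-invariance is inherited for free from the known block-invariance of the weight. The general conjecture beyond these cases remains open, and I would flag that a uniform proof would presumably require a conceptual reason, perhaps in the spirit of the ``essential algebra'' framework of Section 2, for why $\nu_\Phi(s_V)$ must be constant on Brauer-graph components — something the present paper only speculates about.
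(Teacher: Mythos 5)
Your proposal follows essentially the same route as the paper: reduction via the Dipper--Mathas Morita equivalence, rewriting the $\Phi$-defect of $V^{\ulambda}$ as the number of charged hook lengths divisible by $e$, identifying this count on the abacus with Fayers' weight (the paper's Propositions \ref{nuinfty} and \ref{nu} and Theorem \ref{weight-defect}), then Clifford theory for $G(l,p,n)$ and a GAP3 check for the exceptional cases of \cite{CM}. The only caveat is that you have outlined rather than executed the crucial abacus computation identifying the divisibility count with the weight, which you correctly flag as the heart of the argument and which is exactly where the paper's technical work lies.
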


In the rest of this paper, we are going to prove the above conjecture in the following cases:\smallbreak
\begin{itemize}
\item $W=G(l,p,n)$, where $n \neq 2$, or $n=2$ and $p$ is odd;\smallbreak
\item $W$ is an exceptional complex reflection group of rank $2$ and $\mathcal{H}_\varphi(W)$ has ``distinguished'' parameters motivated by generalised Harish-Chandra theory.
\end{itemize}
These cases cover the unequal parameter case for finite Coxeter groups of type $B_n \cong G(2,1,n)$, as well as the dihedral groups $I_2(p) \cong G(p,p,2)$ where $p$ is odd.
We will also show that the validity  of the conjecture transfers to some other types of essential algebras, the cyclotomic  Yokonuma--Hecke algebras, which are isomorphic to 
direct sums of matrix algebras over tensor products of cyclotomic Hecke algebras.

\begin{Rem}
Let $W$ be a Weyl group, let $V \in   \Irr(W)$ and let $p$ be a prime number. As we mentioned in the introduction, when studying the representation theory of $W$ over a field $\mathbb{F}$ of characteristic $p$, we are interested in the traditional $p$-defect of $V$, which is defined as the $p$-part of the integer $|W|/\chi_V(1)$ and is not a block invariant. Since $|W|/\chi_V(1)=s_V(1)$ and $K=\Q$, that number is equal to the sum of
$\nu_{\Phi_{p^k}}(s_V(y))$ for all $k \in \mathbb{Z}_{>0}$. If now $\nu_{\Phi_{p^k}}(s_V(y))=0$ for all $k \in \mathbb{Z}_{>1}$, then that $p$-defect coincides with the $\Phi_p$-defect of $V$, which is a block invariant of the Hecke algebra when $y$ specialises to a primitive $p$-th root of unity  (or conjectured to be for $I_2(6)$ in the unequal parameter case). For example, if $W$ is the symmetric group $\mathfrak{S}_n$, then this condition holds if and only if $p^2>n$ (for the form of the Schur elements in this case, see Formula \eqref{claim} for $l=1$).  Note  that James's conjecture \cite{Jam} also makes a connection between the decomposition matrix of the group algebra of $\mathfrak{S}_n$ over $\mathbb{F}$ and the one of its corresponding cyclotomic Hecke algebra when $y$ specialises to a primitive $p$-th root of unity with the assumption $p^2>n$ (the conjecture has been recently disproved, cf.~ \cite{Wi}). 
Furthermore, in Theorem \ref{weight-defect}, we establish that the $\Phi_p$-defect is equal to the $p$-weight, which is also a known block invariant of $\mathbb{F}[\mathfrak{S}_n]$.
\end{Rem}

\section{Defect in cyclotomic Ariki--Koike algebras}\label{sec-AK}

Hecke algebras associated with complex reflection groups of type $G(l,1,n)$ are also known as Ariki--Koike algebras.
In this section, we give the description of the Schur elements for Ariki-Koike algebras obtained in \cite{CJ}. This description uses a generalised version of hook lengths for partitions called ``generalised hook lengths''. 
We give an easy way to compute variations of these elements, called ``charged hook lengths'', using the notion of abaci. 
We produce counting formulas for the number of charged hook lengths equal to $0$ in $\Z$ and in $\Z/e\Z$ for  $e \in \Z_{>1}$,  which allow us to establish a connection between the notions of defect and weight for a multipartition (see \S \ref{sec-def-wei}). This connection leads to the proof of Conjecture \ref{our conj} for type $G(l,1,n)$.

Throughout this section, let  $n\in \mathbb{Z}_{\geq 0}$ and $l\in \mathbb{Z}_{>0}$. 

\subsection{Generic Ariki-Koike algebras}
Let ${\bf q}:=(Q_0,\,\ldots,\,Q_{l-1}\,;\,q)$ be a set of $l+1$ indeterminates  and
set $R:=\mathbb{Z}[{\bf q},{\bf q}^{-1}]$.
The {\it Ariki-Koike algebra} $\mathcal{H}^{\bf q}_{n}$  is the associative $R$-algebra (with unit) with generators $T_0,\,T_1,\,\ldots,\,T_{n-1}$ and relations:
\begin{center}
$\begin{array}{rl}
(T_0 -Q_0) (T_0 -Q_1)\cdots(T_0 -Q_{l-1})=0& \\  \smallbreak
(T_i-q)(T_i+1)=0  & \text{for $1\leq i \leq n-1$}\\  \smallbreak
T_0T_1T_0T_1=T_1T_0T_1T_0&\\  \smallbreak
T_iT_{i+1}T_i=T_{i+1}T_iT_{i+1} & \text{for $1\leq i \leq n-2$}\\  \smallbreak
T_iT_j=T_jT_i  &\text{for  $0\leq i <j \leq n-1$ with $j-i>1$.}
\end{array}$
\end{center}
The generic Hecke algebra of $G(l,1,n)$ has a presentation very similar to the presentation of the Ariki--Koike algebra, with the only difference being that the generators $T_i$, for $i \neq 0$, satisfy quadratic relations of the form $(T_i-q_0)(T_i-q_1)=0$, where $q_0, q_1$ are two indeterminates. If we set $q:=-q_0q_1^{-1}$, then 
$\mathcal{H}(G(l,1,n))$ can be defined over $R$ and is isomorphic to $\mathcal{H}^{\bf q}_{n}$.


The representation theory of $\mathcal{H}^{\bf q}_{n}$ has first been studied by Ariki and Koike \cite{ArKo} and is governed by the combinatorics of partitions.   
   A {\it partition} $\lambda$ is a   non-increasing
sequence $\lambda=(\lambda_{1},\cdots,\lambda_{m})$ of non-negative
integers. One can assume this sequence is infinite by adding parts equal to
zero. The {\it rank}  of the partition is defined to be the number 
$|\lambda|:=\sum_{1\leq i\leq m} \lambda_i$. If $|\lambda|=n \in \mathbb{N}$,
 we say that $\lambda$ is a \emph{partition of} $n$. By convention, the unique partition of $0$ is the empty partition $\emptyset$.
More generally, for $l\in \mathbb{Z}_{>0}$, an {\it $l$-partition $\ulambda$ of} $n$ is a sequence $(\lambda^0,\lambda^1,\ldots,\lambda^{l-1})$  of $l$ partitions 
 such that $\sum_{0\leq j\leq l-1} |\lambda^j |=n$. The number $n$ is  called  the \emph{rank} of $\ulambda$ and it is denoted by $|\ulambda|$.  The set of $l$-partitions is denoted by $\Pi^l$ and the 
  set of $l$-partitions of rank $n$ is denoted by $\Pi^l (n)$ (if $l=1$, the letter $l$ is omitted). 
It follows from \cite{ArKo} and Ariki's semisimplicity criterion \cite{Arsem} that the algebra $\Q({\bf q})\mathcal{H}^{\bf q}_{n}$ is split semisimple. We have a bijection $\Pi^l (n) \leftrightarrow \Irr(\Q({\bf q})\mathcal{H}^{\bf q}_{n}),\, \ulambda  \mapsto V^{\ulambda}$.

\subsection{Generalised hook lengths and Schur elements}
As stated in the previous section, two independent descriptions of the Schur elements of $\mathcal{H}^{\bf q}_{n}$  have been given by Geck--Iancu--Malle  \cite{GIM} and Mathas \cite{Mat}. In both articles, the Schur elements are given as fractions in $\Q({\bf q})$. However, since the Schur elements belong to the Laurent polynomial ring $R$, we know that the denominator  always divides the numerator. In \cite{CJ} we have given a cancellation-free formula for these Schur elements, that is, we have explicitly described their irreducible factors in $R$. This is the formula that we are going to use in this paper. In order to present it, we need some further combinatorial notions.

Let $\lambda \in \Pi$. We define the set of nodes $[\lambda]$ of $\lambda$ to be the set
$$[\lambda]:=\{(i,j)\,\,|\,\, i\geq 1,\,\,1 \leq j \leq \lambda_i\}.$$
Each node $(i,j)$ represents a box in the $i$-th row and the $j$-the column of the \emph{Young diagram} of $\lambda$, which we define as a left-justified array. 
We identify partitions with their Young diagrams.

The {conjugate partition} of $\lambda$ is the partition $\lambda'$ defined by
$$\lambda'_{k}:=\sharp\{i\,|\,i\geq 1 \text{ such that } \lambda_i\geq k\}.$$
The set  of nodes of $\lambda'$ satisfies
$$(i,j) \in [\lambda'] \Leftrightarrow (j,i)\in [\lambda]$$
and the Young diagram of $\lambda'$ is obtained from the one of $\lambda$ by transposition with respect to the main diagonal.

If $x=(i,j) \in [\lambda]$ and $\mu \in \Pi$, we define the {\em generalised hook length of $x$ with respect to } $(\lambda, \mu)$ to be the integer:
\begin{equation}\label{ghl}
h_{i,j}^{\lambda,\mu}:=\lambda_i-i+\mu'_j-j+1.
\end{equation}
For $\mu=\lambda$, the above formula becomes the classical hook length formula, which yields the length of the hook of $\lambda$ that $x$ belongs to (the hook consists of $x$, the boxes below $x$, and the boxes to its right).

Finally, we set
$$N(\lambda):=\sum_{i \geq 1}  (i-1)\lambda_i
 =  \frac{1}{2}\sum_{i \geq 1}(\lambda'_i-1)\lambda'_i
=\sum_{i\geq 1} \left( \begin{array}{c} \lambda'_i \\ 2 \end{array}\right). 
$$ 
The result below is the main result of \cite{CJ}.

\begin{Th}\label{canfreeform} Let $\ulambda=(\lambda^{0},\lambda^{1},\ldots,\lambda^{l-1}) \in \Pi^l (n)$.  The Schur element $s_{\ulambda}({\bf q})$ associated to the simple module $V^{\ulambda}$ is given by
$$s_{\ulambda}({\bf q})=(-1)^{n(l-1)}q^{-N(\bar{\ulambda})}(q-1)^{-n} \prod_{0 \leq a \leq l-1} \prod_{(i,j) \in [\lambda^{a}]}\prod_{0\leq b \leq l-1}  (q^{h_{i,j}^{\lambda^a,\lambda^{b}}}Q_aQ_b^{-1}-1)$$
where $\bar{\ulambda}$ is the partition of $n$ obtained by reordering all the numbers in $\ulambda$. If we want to get rid of the term $(q-1)^{-n}$, we can rewrite the formula as follows:
\begin{equation}\label{claim}
s_\ulambda({\bf q})=(-1)^{n(l-1)}q^{-N(\bar{\ulambda})}
 \prod_{0 \leq a \leq l-1} \prod_{(i,j) \in [\lambda^{a}]}\left( [{{h_{i,j}^{\lambda^a,\lambda^{a}}}}]_q
 \prod_{0 \leq b \leq l-1,\, b\neq a} (q^{h_{i,j}^{\lambda^a,\lambda^{b}}}Q_aQ_b^{-1}-1)\right)
 \end{equation}
 where $[k]_q
:=(q^k-1)/(q-1)=q^{k-1}+q^{k-2}+\cdots+q+1$ for any $k \in \N^*$.
\end{Th}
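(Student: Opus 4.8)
The plan is to start from one of the known---but not cancellation-free---expressions for the Schur elements of $\mathcal{H}^{\bf q}_n$, namely the formula of Geck--Iancu--Malle \cite{GIM} (that of Mathas \cite{Mat} would serve equally well), and to perform the cancellation explicitly. Recall that the GIM formula presents $s_{\ulambda}({\bf q})$ as a ratio in $\Q({\bf q})$ whose numerator is, up to a monomial and a sign, a product of binomials of the shape $q^{m}Q_aQ_b^{-1}-1$ indexed by pairs of $\beta$-numbers attached to the components of $\ulambda$, and whose denominator is a smaller product of the same kind together with powers of $q$ and of $q-1$. Since we already know that $s_{\ulambda}({\bf q})\in R=\Z[{\bf q},{\bf q}^{-1}]$, the denominator divides the numerator in $R$, and the whole content of Theorem~\ref{canfreeform} is to identify the quotient with the displayed product over the boxes of $\ulambda$.

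The core of the argument is the passage between the box-indexed product in the first displayed formula and the $\beta$-number-indexed product coming from GIM. Fix an integer $M$ larger than the number of parts of every component $\lambda^a$, and encode $\lambda^a$ by its $\beta$-set $B^a:=\{\lambda^a_i-i+M\,:\,1\le i\le M\}$. The elementary identity that drives everything is
$$h_{i,j}^{\lambda^a,\lambda^b}=\bigl(\lambda^a_i-i+M\bigr)-\bigl(j-1-(\lambda^b)'_j+M\bigr)\qquad\text{for }(i,j)\in[\lambda^a],$$
which exhibits the generalised hook length as a $\beta$-number of $\lambda^a$ combined with a quantity governed by the conjugate $(\lambda^b)'$, the latter running over the complement of $B^b$ inside a suitable finite interval via the standard abacus/$\beta$-set complementation. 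Substituting this into $\prod_{(i,j)\in[\lambda^a]}\prod_{0\le b\le l-1}(q^{h_{i,j}^{\lambda^a,\lambda^b}}Q_aQ_b^{-1}-1)$ rewrites it as a product over ordered pairs $(\beta,\gamma)$ with $\beta\in B^a$ and $\gamma$ in (the complement of) $B^b$, which is precisely the shape of the numerator of the GIM formula; a term-by-term comparison then matches the two, the binomials of the GIM denominator being cancelled by identical binomials of its numerator.

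What remains is the bookkeeping of the monomial prefactor. Normalising each off-diagonal binomial $q^{h}Q_aQ_b^{-1}-1$ ($b\neq a$) produces the sign $(-1)^{n(l-1)}$---there being $l-1$ such factors for each of the $n$ boxes---together with a monomial in $q$ and the $Q_a$; collecting all of them, one checks that the $Q$-part cancels completely and that the $q$-part, combined with the $q$-prefactor of GIM and the identity $N(\lambda)=\sum_{i\ge1}\binom{\lambda'_i}{2}$, collapses to $q^{-N(\bar\ulambda)}$ (here the difference between $N(\bar\ulambda)$ and $\sum_aN(\lambda^a)$ is exactly the contribution of the cross factors $b\neq a$, reflecting the good behaviour of $\beta$-sets under merging and reordering the rows of $\ulambda$). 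Finally, the two displayed formulas of Theorem~\ref{canfreeform} are equivalent because for $b=a$ the binomial is $q^{h_{i,j}^{\lambda^a,\lambda^a}}-1=(q-1)\,[h_{i,j}^{\lambda^a,\lambda^a}]_q$, and over the $n$ boxes these diagonal factors contribute exactly $(q-1)^n$, which cancels the factor $(q-1)^{-n}$ appearing in the first formula.

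I expect the step requiring the most care to be the $\beta$-number computation: one must verify, for $M$ large, that every binomial in the GIM denominator is genuinely cancelled and that the resulting expression is independent of $M$, this independence being exactly what makes the box-indexed product well-defined and cancellation-free. It is there that the interplay between a component and its conjugate, and the padding of $\beta$-sets with extra zero parts, must be handled carefully; by contrast, the identifications of the sign and of the power $q^{-N(\bar\ulambda)}$ are routine once one patiently tracks the monomials.
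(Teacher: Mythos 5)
The paper itself contains no proof of Theorem \ref{canfreeform}: it is quoted as the main result of \cite{CJ}, and your plan follows the same route as the proof given there --- start from the $\beta$-number expression of Geck--Iancu--Malle \cite{GIM} (or Mathas \cite{Mat}), convert the box-indexed product into $\beta$-number products via the identity $h_{i,j}^{\lambda^a,\lambda^b}=(\lambda^a_i-i+M)-(j-1-(\lambda^b)'_j+M)$ together with $\beta$-set/abacus complementation, cancel the denominator, and track the sign and the monomial. Your key identity is stated correctly (the quantities $j-1-(\lambda^b)'_j+M$ are indeed the gaps of the $\beta$-set of $\lambda^b$ padded to length $M$), your remark that $N(\bar{\ulambda})-\sum_a N(\lambda^a)$ is a cross term matching the off-diagonal factors is consistent with $N(\bar{\ulambda})=\sum_a N(\lambda^a)+\sum_{a<b}\sum_i(\lambda^a)'_i(\lambda^b)'_i$, and the derivation of the second displayed formula from the first via the diagonal factors $q^{h^{\lambda^a,\lambda^a}_{i,j}}-1=(q-1)[h^{\lambda^a,\lambda^a}_{i,j}]_q$ is fine.

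However, as written the argument stops exactly where the proof actually lies. The ``term-by-term comparison'' with the GIM numerator and denominator, the verification that every binomial of the denominator is genuinely cancelled (and none is used twice), the independence of the padding length $M$, the reconciliation of the ordered pairs $(a,b)$ indexing the box product with the unordered pairs $a\le b$ indexing the GIM products, and the computation showing that the leftover monomial is exactly $(-1)^{n(l-1)}q^{-N(\bar{\ulambda})}$ with all powers of the $Q_a$ cancelling, are all asserted rather than carried out --- and you yourself flag the first of these as the step requiring the most care. In \cite{CJ} precisely these verifications constitute the bulk of the proof, done by a careful $\beta$-set analysis. So your proposal identifies the correct strategy, essentially the one of \cite{CJ}, but it remains an outline whose central computation is left open rather than a complete proof.
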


\begin{Rem}\label{tobeusedlater}
Let $\lambda \in \Pi$. The \emph{content} of a node $(i,j) \in [\lambda]$ is defined to be the difference $c(i,j):=j-i$. 
For $s \in \Z$, we define the $s$-\emph{charged content} of the node $(i,j)$ to be the integer $c(i,j)+s+1$.  
If now we look at Formula \eqref{ghl} for the generalised hook length of $(i,j)$ with respect to $(\lambda,\mu) \in \Pi \times \Pi$, we
 observe that $\lambda_i-i$ is the content of the rightmost box in the $i$-th row of the Young diagram of $\lambda$, while $j-\mu_j'$ is the content of the box at the bottom of the $j$-th column of the Young diagram of $\mu$; in order for the latter to hold even when the $j$-th column is empty, we may assume that the Young diagram of $\mu$ has  an imaginary $0$-th row with an infinite amount of boxes (see the definition of extended Young diagram in \cite{JKle}).
 This approach to the definition of the generalised hook length will serve us later on when we work with charged hook lengths.
\end{Rem}

\subsection{Specialisations and decomposition matrices}\label{sec-U}

  Let  $k$ be a field and let $\theta : R \to k$ 
   be a specialisation of $R$.  Set $\xi_i:=\theta (Q_i)$, for $i=0,1,\ldots,l-1$,  $u:=\theta (q)$ and ${\bf u}:=(\xi_0,\,\ldots,\,\xi_{l-1}\,;\,u)$. 
 Assume that the algebra  $k\mathcal{H}^{\bf u}_{n}$ is split. In \cite[Theorem 4.2]{CJ}, we have shown that the semisimplicity criterion for symmetric algebras given at the end of \S\ref{sec2.1} in combination with the form of the Schur elements given by Theorem \ref{canfreeform} allows us to recover Ariki's semisimplicity criterion for Ariki--Koike algebras, which is the following \cite[Main theorem]{Arsem}:
 
 \begin{Th}\label{Ariki's semisimplicity} The algebra
 $k \mathcal{H}^{\bf u}_n$ is  semisimple if and only if 
 $$\prod_{1\leq i \leq n}(1+u+\cdots+u^{i-1}) \prod_{0 \leq a <b \leq l-1}\,\,\prod_{-n<h<n}(u^h\xi_a-\xi_b) \neq 0.$$
 \end{Th}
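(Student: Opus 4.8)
The plan is to derive Ariki's semisimplicity criterion directly from the semisimplicity criterion for symmetric algebras (namely, $k\mathcal{H}^{\bf u}_n$ is semisimple if and only if $\theta(s_\ulambda) \neq 0$ for all $\ulambda \in \Pi^l(n)$) combined with the cancellation-free formula for the Schur elements from Theorem \ref{canfreeform}. Since $\Q({\bf q})\mathcal{H}^{\bf q}_n$ is split semisimple and $R = \Z[{\bf q},{\bf q}^{-1}]$ is integrally closed, the criterion applies, so everything reduces to understanding when the specialised Schur element $\theta(s_\ulambda({\bf q}))$ can vanish, i.e.\ when $s_\ulambda(\mathbf{u})=0$.

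First I would observe that the prefactor $(-1)^{n(l-1)}q^{-N(\bar\ulambda)}$ in \eqref{claim} specialises to a unit in $k$ (as $u \neq 0$), so it plays no role. Thus $s_\ulambda(\mathbf{u})=0$ if and only if one of the factors $[h_{i,j}^{\lambda^a,\lambda^a}]_q$ vanishes at $q=u$, i.e.\ $1+u+\cdots+u^{h-1}=0$ for some hook length $h = h_{i,j}^{\lambda^a,\lambda^a}$ occurring in some component, or one of the cross-factors $u^{h_{i,j}^{\lambda^a,\lambda^b}}\xi_a\xi_b^{-1}-1$ vanishes for some $a \neq b$. The forward direction of the theorem is then immediate: if the displayed product is nonzero, then in particular $1+u+\cdots+u^{i-1}\neq 0$ for all $1 \le i \le n$, which rules out the first type of vanishing since every hook length is at most $n$; and $u^h\xi_a-\xi_b \neq 0$ for all $|h|<n$ and $a<b$ rules out the second type, since $h_{i,j}^{\lambda^a,\lambda^b}$ always lies strictly between $-n$ and $n$ (it is a difference of a content in $\lambda^a$ and a content in $\lambda^b$, each of absolute value $<n$, shifted appropriately — one needs a short bound here) and the condition is symmetric in $a,b$ upon inverting $u$ and $\xi$'s.

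For the converse, suppose the displayed product vanishes; I would show some Schur element $s_\ulambda(\mathbf{u})$ vanishes by exhibiting a suitable $\ulambda$. If $1+u+\cdots+u^{i_0-1}=0$ for some $i_0 \le n$, take $\ulambda$ with one component equal to the hook partition (or simply the partition $(i_0)$ placed in component $0$ and the rest of the $n$ boxes distributed so that a node with self-hook-length exactly $i_0$ appears); the factor $[i_0]_q$ then appears in \eqref{claim} and vanishes at $u$. If instead $u^{h_0}\xi_a-\xi_b=0$ for some $a<b$ and $|h_0|<n$, I would construct $\ulambda$ with a node $(i,j)$ in component $a$ (or $b$, depending on the sign of $h_0$) whose generalised hook length with respect to the relevant pair of components equals $h_0$; since the generalised hook length $h_{i,j}^{\lambda^a,\lambda^b} = (\lambda^a_i - i) + (j - (\lambda^b)'_j) + 1$ can be made to take any prescribed integer value in $(-n,n)$ by choosing the shapes appropriately (e.g.\ a single row of length $r$ in component $a$ contributes contents $0,1,\dots,r-1$ at its rightmost boxes, and an empty component $b$ contributes $j$ via the imaginary $0$-th row as in Remark \ref{tobeusedlater}), the corresponding factor $u^{h_0}\xi_a\xi_b^{-1}-1$ appears and vanishes.

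The main obstacle is the bookkeeping in the converse direction: one must carefully verify that the achievable values of the generalised hook lengths $h_{i,j}^{\lambda^a,\lambda^b}$, as $\ulambda$ ranges over $\Pi^l(n)$, are \emph{exactly} the integers with $|h|<n$ (together with the self-hook-lengths ranging over $\{1,\dots,n\}$), so that no factor outside the range in Ariki's product can ever be hit and every factor inside it can. This amounts to a small combinatorial lemma about contents of multipartitions of $n$, most cleanly phrased via the extended Young diagram / charged content viewpoint recalled in Remark \ref{tobeusedlater}; once that is in place, both inclusions follow by matching individual factors. I would therefore structure the proof as: (1) reduce to nonvanishing of Schur factors; (2) prove the content-range lemma; (3) match factors in both directions.
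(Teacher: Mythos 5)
Your proposal is correct and follows essentially the route the paper intends: the theorem is stated as Ariki's result, with the paper noting (citing \cite[Theorem 4.2]{CJ}) that it is recovered by combining the semisimplicity criterion for symmetric algebras from \S 2.1 with the cancellation-free Schur element formula of Theorem \ref{canfreeform} --- exactly your reduction to the vanishing of individual Schur factors. The two details you flag do check out: every cross generalised hook length satisfies $1-n\le h_{i,j}^{\lambda^a,\lambda^b}\le n-1$, and every value in this range (as well as every classical hook length $1,\dots,n$) is realised, e.g.\ by putting a hook $(h_0+1,1^{n-h_0-1})$ or the row $(n)$ in one component and leaving the others empty, with the $a>b$ factors handled by the inversion symmetry you mention.
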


In any case, we have a well-defined decomposition matrix
$D_\theta=([V^{\ulambda}:M])_{\ulambda\in \Pi^l (n),\,M\in \text{Irr} ( k  \mathcal{H}_n^{\bf u})}$.   There is a useful result 
 by Dipper and Mathas \cite{DiMa} which allows us to restrict ourselves to a very specific situation in order to study $D_\theta$. In order to do this, we set $\mathcal{U}:=\left\{0,1,\ldots ,{l-1}\right\}$ and we assume that we have a partition 
$$\mathcal{U}=\mathcal{U}_1 \sqcup   \mathcal{U}_2 \sqcup \ldots  \sqcup \mathcal{U}_t $$
which is the finest with respect to the property
$$\prod_{1\leq \alpha<\beta\leq t}\,\, \prod_{(a,b)\in \mathcal{U}_{\alpha}\times \mathcal{U}_{\beta}}\,\, \prod_{-n<h<n} (u^h \xi_a-\xi_b) \neq 0.$$ 
 For $i=1,\ldots,t$, write $\mathcal{U}_i:=\{a_{i,1},\ldots,a_{i,m_i}\}$ with $a_{i,1} < \cdots < a_{i,m_i}$. Whenever ${\bf f}=(f_0,\ldots,f_{l-1})$ is a sequence  indexed by $\mathcal{U}$, we will write ${\bf f}[i]$ for the sequence $(f_{a_{i,1}},\ldots,f_{a_{i,m_i}})$.

 \begin{Th}\label{Nmorita}
{\bf (The  Morita equivalence of Dipper and Mathas)}  For $i=1,\ldots,t$, we set ${\bf u_i}:=((\xi_0,\,\ldots,\,\xi_{l-1})[i];\,u)$. 
The algebra $k\mathcal{H}^{\bf u}_{n}$ is Morita equivalent to the algebra
$$\bigoplus_{\underset{n_1,\ldots ,n_t\geq 0}{n_1+\ldots +n_t=n}}  k \mathcal{H}^{\bf u_1}_{n_1}\otimes_k   k\mathcal{H}^{\bf u_2}_{n_2}\otimes_k  \ldots \otimes_k    k\mathcal{H}_{n_t}^{\bf u_t}.    $$
\end{Th}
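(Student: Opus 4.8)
\emph{Proof proposal.} The plan is to follow Dipper and Mathas and split $k\mathcal{H}^{\bf u}_{n}$ along the generalised eigenspaces of its Jucys--Murphy elements, reading off the two factors of the statement. First I would bring in the Jucys--Murphy elements $L_1,\dots,L_n$ of $k\mathcal{H}^{\bf u}_{n}$, with $L_1=T_0$ and $L_{m+1}=u^{-1}T_mL_mT_m$. The standard facts I will use are: the $L_m$ commute with one another; $T_0$ commutes with every $L_m$ (a consequence of the braid relation $T_0T_1T_0T_1=T_1T_0T_1T_0$); for $1\le m\le n-1$, $T_m$ commutes with $L_r$ for $r\notin\{m,m+1\}$ and with two algebraically independent ``symmetric'' elements of $k[L_m,L_{m+1}]$ (for instance $L_m+L_{m+1}$ and $L_mL_{m+1}$); and on any finite-dimensional $k\mathcal{H}^{\bf u}_{n}$-module each $L_m$ acts with eigenvalues of the form $u^{c}\xi_a$, where $0\le a\le l-1$ and $c$ is the content of a node of a multipartition of $n$, so $|c|<n$. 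Now factor $\prod_{a=0}^{l-1}(X-\xi_a)=\prod_{i=1}^{t}P_i(X)$ with $P_i(X)=\prod_{a\in\mathcal{U}_i}(X-\xi_a)$. The finest-partition hypothesis, which says $u^{h}\xi_a\ne\xi_b$ whenever $a,b$ lie in distinct parts and $-n<h<n$, is exactly what guarantees that the ``type'' of an eigenvalue $u^{c}\xi_a$, namely the index $i$ with $a\in\mathcal{U}_i$, is well defined. Using this I would produce, for each $m$, orthogonal idempotents $\epsilon_m^{(1)},\dots,\epsilon_m^{(t)}\in k[L_m]$, where $\epsilon_m^{(i)}$ is the spectral projector onto the generalised eigenspaces of $L_m$ with eigenvalues of type $i$, so that $\sum_{i=1}^{t}\epsilon_m^{(i)}=1$.

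Next, for each composition ${\bf n}=(n_1,\dots,n_t)$ of $n$ into $t$ nonnegative parts, set
$$e_{\bf n}:=\sum_{\sigma}\ \prod_{m=1}^{n}\epsilon_m^{(\sigma(m))},$$
the sum over all maps $\sigma\colon\{1,\dots,n\}\to\{1,\dots,t\}$ with $\#\sigma^{-1}(i)=n_i$ for all $i$. These are orthogonal idempotents of $k[L_1,\dots,L_n]$ with $\sum_{\bf n}e_{\bf n}=\prod_{m}\bigl(\sum_i\epsilon_m^{(i)}\bigr)=1$. The first real point is that each $e_{\bf n}$ is \emph{central}: it commutes with $T_0$ since $T_0$ commutes with all the $L_m$, and for $1\le m\le n-1$ one groups the maps $\sigma$ into pairs that differ only by transposing the values at $m$ and $m+1$; the joint contribution of such a pair to $e_{\bf n}$ is a polynomial in the $L_r$ ($r\ne m,m+1$) times $\epsilon_m^{(i)}\epsilon_{m+1}^{(j)}+\epsilon_m^{(j)}\epsilon_{m+1}^{(i)}$, which is symmetric in $L_m\leftrightarrow L_{m+1}$ and hence lies in the subalgebra with which $T_m$ commutes. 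Therefore $k\mathcal{H}^{\bf u}_{n}=\bigoplus_{\bf n}e_{\bf n}k\mathcal{H}^{\bf u}_{n}$ as a direct sum of algebras, and it remains to show that each $B_{\bf n}:=e_{\bf n}k\mathcal{H}^{\bf u}_{n}$ is Morita equivalent to $k\mathcal{H}^{\bf u_1}_{n_1}\otimes_k\cdots\otimes_k k\mathcal{H}^{\bf u_t}_{n_t}$.

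Fix ${\bf n}$, write $N_j:=n_1+\cdots+n_j$, and let $f:=\prod_{m}\epsilon_m^{(\sigma_0(m))}$ be the idempotent attached to the \emph{sorted} map $\sigma_0$, i.e.\ $\sigma_0(m)=i$ whenever $N_{i-1}<m\le N_i$; note $fe_{\bf n}=e_{\bf n}f=f$. I would first show $f$ is a full idempotent of $B_{\bf n}$, so that $B_{\bf n}$ is Morita equivalent to $fB_{\bf n}f=f\,k\mathcal{H}^{\bf u}_{n}\,f$. Since centrality of $e_{\bf n}$ gives $B_{\bf n}=\sum_{\sigma,\sigma'}f_{\sigma}\,k\mathcal{H}^{\bf u}_{n}\,f_{\sigma'}$ over maps $\sigma,\sigma'$ of type ${\bf n}$ (with $f_{\sigma}:=\prod_m\epsilon_m^{(\sigma(m))}$), it suffices to place each $f_{\sigma}$ in the two-sided ideal $B_{\bf n}fB_{\bf n}$; chaining transpositions reduces this to the case that $\sigma$ is obtained from $\sigma_0$ by swapping distinct values $i,j$ at adjacent positions $m,m+1$. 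There I would invoke a Jucys--Murphy intertwiner $\Phi_m$ ($=T_m$ plus a correction term rational in $L_m,L_{m+1}$) satisfying $\Phi_m\,\epsilon_m^{(i)}\epsilon_{m+1}^{(j)}=\epsilon_m^{(j)}\epsilon_{m+1}^{(i)}\,\Phi_m$; its denominators involve $L_m$ and $L_{m+1}$, and on the weight spaces supporting $f_{\sigma}$ and $f$ the eigenvalues of $L_m$ and $L_{m+1}$ have distinct types $i\ne j$, hence are unequal --- and this is \emph{again} exactly the separation hypothesis --- so $\Phi_m$ is invertible there, and conjugation by it gives $f_{\sigma}\in B_{\bf n}fB_{\bf n}$.

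It then remains to identify $f\,k\mathcal{H}^{\bf u}_{n}\,f$ with $k\mathcal{H}^{\bf u_1}_{n_1}\otimes_k\cdots\otimes_k k\mathcal{H}^{\bf u_t}_{n_t}$. From the action on the cell modules one sees that in every standard multitableau compatible with $\sigma_0$ the smallest of the entries $N_{i-1}+1,\dots,N_i$ must occupy a $(1,1)$-node, of content $0$, in a component indexed by $\mathcal{U}_i$; hence $\widetilde L_i:=fL_{N_{i-1}+1}f$ satisfies $\prod_{a\in\mathcal{U}_i}(\widetilde L_i-\xi_a)=0$, and together with the $\widetilde T_m:=fT_mf$ for $N_{i-1}<m<N_i$ it generates a copy of $k\mathcal{H}^{\bf u_i}_{n_i}$ inside $f\,k\mathcal{H}^{\bf u}_{n}\,f$; the copies for different $i$ commute (disjoint blocks of indices) and together generate everything. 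A dimension count closes the argument: $k\mathcal{H}^{\bf u}_{n}$ is cellular with cell modules indexed by $\Pi^l(n)$, so $f\,k\mathcal{H}^{\bf u}_{n}\,f$ is cellular with cell modules $f\Delta(\ulambda)$ for those $\ulambda$ with $\sum_{a\in\mathcal{U}_i}|\lambda^{(a)}|=n_i$ for all $i$; such $\ulambda$ correspond to tuples $(\umu_1,\dots,\umu_t)$ with $\umu_i$ an $m_i$-partition of $n_i$ ($m_i:=|\mathcal{U}_i|$) and $\dim f\Delta(\ulambda)=\prod_i\dim\Delta(\umu_i)$, whence $\dim_k f\,k\mathcal{H}^{\bf u}_{n}\,f=\prod_i\dim_k k\mathcal{H}^{\bf u_i}_{n_i}$ and the surjection just constructed is an isomorphism. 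The hardest part, I expect, is the interplay with the separation hypothesis: proving that $e_{\bf n}$ is central and, more delicately, that the sorted idempotent $f$ is full (invertibility of the Jucys--Murphy intertwiners across types) --- this is precisely where the finest-partition condition does its work; the final identification of $f\,k\mathcal{H}^{\bf u}_{n}\,f$ with the tensor product is then bookkeeping plus a dimension count.
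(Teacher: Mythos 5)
The paper itself does not prove this statement: it is imported from Dipper--Mathas \cite{DiMa} (with Rostam's explicit isomorphism \cite{Salim} noted as an alternative route), so your sketch has to be judged against that proof, and it has a genuine gap at its very first step. You define, for each $m$, spectral projectors $\epsilon_m^{(i)}\in k[L_m]$ onto the generalised eigenspaces of $L_m$ whose eigenvalues have ``type $i$'', claiming the finest-partition hypothesis makes the type of an eigenvalue $u^c\xi_a$ well defined. It does not: the hypothesis forbids $u^h\xi_a=\xi_b$ for $a,b$ in distinct parts only for $|h|<n$, whereas two eigenvalues $u^c\xi_a$, $u^{c'}\xi_b$ of the same $L_m$ can have $|c-c'|$ as large as $2n-2$. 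Concretely, take $n=2$, $l=2$, $u$ a primitive fifth root of unity, $\xi_0=1$, $\xi_1=u^2$: the finest partition is into singletons (since $u^h-u^2\neq 0$ for $h=-1,0,1$), yet the two one-dimensional representations $T_0\mapsto \xi_0,\ T_1\mapsto u$ and $T_0\mapsto \xi_1,\ T_1\mapsto -1$ both send $L_2=u^{-1}T_1T_0T_1$ to $u=u\,\xi_0=u^{-1}\xi_1$. So the eigenvalue $u$ of $L_2$ occurs with both types and no polynomial in $L_2$ can separate them: your $\epsilon_2^{(1)},\epsilon_2^{(2)}$ do not exist. The same too-weak inequality is invoked again when you argue that on the weight spaces supporting $f_\sigma$ the eigenvalues of $L_m$ and $L_{m+1}$ ``have distinct types, hence are unequal'' in order to invert the intertwiner $\Phi_m$, so the fullness argument inherits the problem. (A smaller flaw: you deduce $\prod_{a\in\mathcal{U}_i}(\widetilde L_i-\xi_a)=0$ from the action on cell modules, but an element of a cellular algebra can annihilate every cell module without being zero, so this relation needs a different justification.)

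The strategy you outline --- sort Jucys--Murphy eigenvalues by type, prove the sorted idempotent is full via intertwiners, identify the truncation, and close with a dimension count --- is essentially the KLR-style argument behind Rostam's isomorphism, and it does work under the stronger hypothesis that parameters in different parts lie in different $u$-orbits, i.e. $\xi_a/\xi_b\notin u^{\mathbb{Z}}$, for then the type of every eigenvalue genuinely is well defined. But the theorem as stated only assumes separation in the window $-n<h<n$, and there one must either pass to joint generalised eigenspaces of all of $L_1,\dots,L_n$ and prove nontrivial combinatorial facts about which weight sequences occur (the type of a position is a function of the whole sequence, not of a single eigenvalue), or argue as Dipper and Mathas actually do: they avoid spectral projectors altogether, construct explicit right ideals generated by products of the form $\prod_b(L_i-\xi_b)$, show that their direct sum is a progenerator, and compute its endomorphism algebra to be the displayed direct sum of tensor products. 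As written, your proposal does not establish the theorem in the generality in which it is used in the paper.
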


\begin{Rem}
Recently, Rostam \cite{Salim} has produced an explicit isomorphism between $k\mathcal{H}^{\bf u}_{n}$ and
$$\bigoplus_{\underset{n_1,\ldots ,n_t\geq 0}{n_1+\ldots +n_t=n}} {\rm Mat}_{\frac{n!}{n_1!\dots n_t!}} \left(k \mathcal{H}^{\bf u_1}_{n_1}\otimes_k   k\mathcal{H}^{\bf u_2}_{n_2}\otimes_k  \ldots \otimes_k    k\mathcal{H}_{n_t}^{\bf u_t}\right),    $$
which implies the Morita equivalence of Dipper and Mathas.
\end{Rem}

If, for each $i=1,\ldots, t$, we take $M_i$ to be a simple $k \mathcal{H}^{\bf u_i}_{n_i}$-module, then, under the Morita equivalence of Dipper and Mathas, we get a  simple module  $M\in \operatorname{Irr} (k\mathcal{H}^{\bf u}_{n})$ 
 associated to $M_1\otimes\ldots  \otimes M_t$. For all $\ulambda\in \Pi^l (n)$, we have:
 \begin{equation}\label{decform}
[V^\ulambda:M]=[V^{\ulambda[1]}:M_1] \ldots [V^{\ulambda[t]}:M_t].
\end{equation}
Moreover, all the remaining entries of $D_\theta$ must be equal to $0$. 

Let us now consider the Brauer graph associated with $\theta$. Let $\ulambda, \umu \in \Pi^l (n)$. By \eqref{decform}, we have
$[V^\ulambda:M] \neq 0 \neq [V^\umu:M]$ if and only if
$[V^{\ulambda[i]}:M_i] \neq 0 \neq [V^{\umu[i]}:M_i]$ for all $i=1,\ldots,t$. This can obviously only happen if $\ulambda[i]$ and $\umu[i]$ have the same rank, say $n_i$, for all $i=1,\ldots,t$. We deduce that $V^\ulambda$ and $V^\umu$ are in the same $\theta$-block if and only if $V^{\ulambda[i]}$ and $V^{\umu[i]}$ are in the same block (of $k \mathcal{H}^{\bf u_i}_{n_i}$) for all $i=1,\ldots,t$. Sometimes, for simplicity, we say that $\ulambda$ and $\umu$ are in the same block to mean that $V^\ulambda$ and $V^\umu$ are in the same block.

Let $\mathcal{O}$ be the localisation of $R$ at ${\rm Ker}\theta$ and assume that $\mathcal{O}$ is a discrete valuation ring. Let $\Phi$ be a generator of the maximal ideal of $\mathcal{O}$. Let $\ulambda \in \Pi^{l}(n)$.
For $1\leq \alpha<\beta\leq t$ and ${(a,b)\in \mathcal{U}_{\alpha}\times \mathcal{U}_{\beta}}$, we have:
 $$ u^{h_{i,j}^{\lambda^{a},\lambda^{b}}} \xi_{a} \xi_{b}^{-1}-1\neq 0
 \quad \text{ for all } (i,j) \in [\lambda^a],
 $$
 whence
 $$ \nu_\Phi(q^{h_{i,j}^{\lambda^{a},\lambda^{b}}} Q_{a} Q_{b}^{-1}-1)= 0
 \quad \text{ for all } (i,j) \in [\lambda^a].$$
 Thanks to the form of the Schur elements of the Ariki--Koike algebras given by \eqref{claim}, we deduce the following formula for the calculation of the $\Phi$-defect of $V^{\ulambda}$:
 \begin{equation}\label{FormuladefectAK}
 \nu_\Phi(s_{\ulambda}({\bf q}))=\sum_{i=1}^t \nu_\Phi(s_{\ulambda[i]}({\bf q_i})),
 \end{equation}
where  ${\bf q_i}=((Q_0,\,\ldots,\,Q_{l-1})[i]\,;\,q)$.
That is, we have that the $\Phi$-defect of $V^{\ulambda}$ is the sum of the $\Phi$-defects of the
$V^{\ulambda[i]}$'s.

\subsection{Cyclotomic Ariki--Koike algebras}

Let  $\eta_{l}:={\rm exp}(2\pi i/l)$ and $K:=\mathbb{Q}(\eta_l)$. 
Let $y$ be an indeterminate, 
and let $\varphi: \mathbb{Z}_K[{\bf q},{\bf q}^{-1}] \rightarrow \mathbb{Z}_K[y,y^{-1}]$   be a cyclotomic specialisation of $\mathcal{H}^{\bf q}_{n}$ such that
\begin{eqnarray*}\label{cy}
&&\varphi(Q_i)=\eta_{l}^{i}y^{r_{i}} \textrm{ for}\ i=0,1,\ldots ,l-1, \\
&&\varphi(q)=y^{r} 
\end{eqnarray*}
where $(r_{0},\ldots ,r_{l-1},r)\in \mathbb{Z}^{l+1}$. The cyclotomic Ariki--Koike algebra $K(y) (\mathcal{H}^{\bf q}_{n})_\varphi$ is split semisimple.

Let now $\eta$ be a non-zero complex number, and let $\theta: \mathbb{Z}_{K}[y,y^{-1}] \rightarrow K(\eta), y \mapsto \eta$ be a specialisation of $\mathbb{Z}_{K}[y,y^{-1}]$ such that $\theta(\eta_l)$ is a primitive $l$-th root of unity (for simplicity, we may assume that $\theta$ is a $\Z_K$-algebra morphism, whence $\theta(\eta_l)=\eta_l$). Let $\Phi$ denote the minimal polynomial of $\eta$ over the field $K$. The aim of Section \ref{sec-AK} will be to prove the following result:

\begin{Th}\label{main-AK}
Let $\ulambda,\umu \in \Pi^l(n)$. If $V^{\ulambda}$ and $V^{\umu}$ are in the same $\theta$-block, then they have the same $\Phi$-defect, that is,
$\nu_\Phi(\varphi(s_{\ulambda}({\bf q})))=\nu_\Phi(\varphi(s_{\umu}({\bf q})))$.
\end{Th}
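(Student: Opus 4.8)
The plan is to reduce the statement to a purely combinatorial fact about multipartitions via the machinery already set up in Section~\ref{sec-U}, then to resolve that combinatorial fact by relating the $\Phi$-defect to Fayers' weight. First I would invoke the Morita equivalence of Dipper and Mathas (Theorem~\ref{Nmorita}) together with formula \eqref{decform} and formula \eqref{FormuladefectAK}: since $V^{\ulambda}$ and $V^{\umu}$ lie in the same $\theta$-block if and only if $V^{\ulambda[i]}$ and $V^{\umu[i]}$ lie in the same block of $k\mathcal{H}^{{\bf u}_i}_{n_i}$ for every $i$, and since the $\Phi$-defect is additive over the factors $i=1,\dots,t$, it suffices to prove the claim in the ``pure'' case where the partition $\mathcal{U}=\mathcal{U}_1$ is trivial — that is, where $u^h\xi_a\xi_b^{-1}=1$ can actually occur for all $a,b$. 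Equivalently, after a $\zeta$-cyclotomic reparametrisation, one may assume all $Q_i$ specialise to powers of a single primitive $e$-th root of unity and reduce to the situation governed by a multicharge modulo $e$, where $\Phi=\Phi_e$.

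In that reduced setting I would read off from \eqref{claim} that $\nu_\Phi(\varphi(s_{\ulambda}({\bf q})))$ counts, with the right multiplicities, the factors $q^{h_{i,j}^{\lambda^a,\lambda^b}}Q_aQ_b^{-1}-1$ and $[h_{i,j}^{\lambda^a,\lambda^a}]_q$ that vanish under the specialisation $\theta\circ\varphi$; tracking exponents, each such factor vanishes precisely when a certain integer attached to the node $(i,j)$ and the pair $(a,b)$ — its \emph{charged hook length}, built from $r$, the $r_i$, and the contents as in Remark~\ref{tobeusedlater} — is divisible by $e$. So the defect of $V^{\ulambda}$ equals the number of charged hook lengths of $\ulambda$ that are $\equiv 0 \pmod e$. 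The next step, which is the technical heart, is a counting argument: I would translate the multipartition $\ulambda$ together with its multicharge into its $e$-abacus display (one runner-configuration per component, shifted by the charges), and count the charged hook lengths congruent to $0$ modulo $e$ by counting, on each pair of runners, the bead-gap incidences that straddle a multiple of $e$. This is the point where the combinatorics has to be organised carefully so that the bookkeeping across the $l$ components and all ordered pairs $(a,b)$ matches exactly Fayers' definition of the weight of $\ulambda$ relative to the multicharge.

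Having established that the $\Phi$-defect of $V^{\ulambda}$ equals the weight $w(\ulambda)$, the proof concludes by quoting the known fact \cite{F} that the weight is constant on blocks of the Ariki--Koike algebra: if $V^{\ulambda}$ and $V^{\umu}$ are in the same $\theta$-block, then (in the reduced pure setting) they have the same multicharge-residue content and the same weight, hence the same defect, and the general case follows from the additivity reduction of the first paragraph. The main obstacle I anticipate is the middle step: getting the abacus count of ``charged hook lengths divisible by $e$'' to coincide \emph{on the nose} with Fayers' weight, including signs and the treatment of the diagonal terms $[h_{i,j}^{\lambda^a,\lambda^a}]_q$ versus the off-diagonal terms, and handling the imaginary $0$-th row of the extended Young diagram correctly when a column of $\lambda^b$ is empty; a clean choice of normalisation for the charged contents (so that the charges interact symmetrically) will be essential to keep this manageable.
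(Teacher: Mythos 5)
Your proposal follows essentially the same route as the paper: reduce via the Dipper--Mathas Morita equivalence and the additivity of the defect to the case of a single primitive $e$-th root of unity with an integer multicharge, identify the $\Phi$-defect from the cancellation-free Schur element formula \eqref{claim} as the number of charged hook lengths divisible by $e$, establish via an abacus/$\beta$-number count that this number equals the weight of the charged multipartition, and conclude from Fayers' result that the weight is a block invariant. The paper carries out your anticipated ``technical heart'' by proving the count $\sum_{c}\sum_{x\in X^c}\sum_{k\geq 0}\mathcal{N}_k(x)$ (Propositions \ref{nuinfty} and \ref{nu}, with the multicharge normalised to lie in $\mathcal{A}_e^l$) and comparing it with the abaci (Uglov-map) definition of the weight rather than Fayers' original formula, which is exactly the kind of normalisation you flagged as necessary.
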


Set $\xi_i:=\eta_l^i \eta^{r_i} =\theta (\varphi(Q_i))$, for $i=0,1,\ldots,l-1$,  $u:=\eta^r=\theta (\varphi(q))$ and ${\bf u}:=(\xi_0,\,\ldots,\,\xi_{l-1}\,;\,u)$. 
It follows from Proposition \ref{Schur element cyclotomic} that the algebra $K(\eta)\mathcal{H}^{\bf u}_{n}$ is semisimple unless $\eta$ is a root of unity, which is true if and only if $u$ is a root of unity.

From now on, let us assume that $u$ is a primitive root of unity of order $e>0$, and set $k:=K(\eta)$. Using the notation of the previous subsection, we have that $k\mathcal{H}^{\bf u}_{n}$ is Morita equivalent to the algebra 
$$\bigoplus_{\underset{n_1,\ldots ,n_t\geq 0}{n_1+\ldots +n_t=n}}  k \mathcal{H}^{\bf u_1}_{n_1}\otimes_k   k\mathcal{H}^{\bf u_2}_{n_2}\otimes_k  \ldots \otimes_k    k\mathcal{H}_{n_t}^{\bf u_t}.    $$
For $i= 1,\ldots,t$, by definition of the sets $\mathcal{U}_i$, there exists $(s_{a_{i,1}},\ldots,s_{a_{i,{m_i}}}) \in \mathbb{Z}^{m_i}$ such that $\xi_{a_{i,j}}/\xi_{a_{i,1}}=u^{s_{a_{i,j}}}$ for all $j=1,\ldots,m_i$. 
In fact, for any $n_i \geq 0$, the algebra $k\mathcal{H}^{{\bf u}_i}_{n_i}$ is isomorphic to the algebra  $k\mathcal{H}^{\bf \widetilde{u}_i}_{n_i}$ where ${\bf \widetilde{u}}_i:=(u^{s_{a_{i,1}}},\ldots,u^{s_{a_{i,{m_i}}}};\,u)$. 

Let  $\ulambda \in \Pi^{l}(n)$ and 
$i \in \{1,\ldots,t\}$. For all $a, b \in \mathcal{U}_i$ and $(c,d) \in [\lambda^a]$, we have:
$$\theta(\varphi(q^{h_{c,d}^{\lambda^a,\lambda^{b}}}Q_aQ_b^{-1}-1))=
\theta(\eta_l^{a-b}y^{rh_{c,d}^{\lambda^a,\lambda^{b}}+r_a-r_b}-1)=
\eta_l^{a-b}\eta^{rh_{c,d}^{\lambda^a,\lambda^{b}}+r_a-r_b}-1=
u^{h_{c,d}^{\lambda^a,\lambda^{b}}+s_a-s_b}-1,$$
whence we obtain that
\begin{equation}\label{combo1}
\eta_l^{a-b}\eta^{rh_{c,d}^{\lambda^a,\lambda^{b}}+r_a-r_b}-1=0 \Leftrightarrow
h_{c,d}^{\lambda^a,\lambda^{b}}+s_a-s_b \equiv 0 \text{ mod } e.
\end{equation}
Set $M:=r h_{c,d}^{\lambda^a,\lambda^{b}}+r_a-r_b$. 
If $M \neq 0$, 
consider the following polynomial inside $K[y]$:
$$P(y):=\left\{ \begin{array}{ll}
\eta_l^{a-b}y^{M}-1& \text{if } M>0,\\ &\\
\eta_l^{b-a}y^{-M}-1 & \text{if } M<0.
\end{array}\right.$$
Since ${\rm gcd}(P(y),P'(y))=1$, all roots of $P(y)$ are simple.
Thus,
we have that $P(\eta)=0$ if and only if $\Phi$ divides $P(y)$ exactly once. That is,
\begin{equation}\label{combo2}
\eta_l^{a-b}\eta^M-1=0 \Leftrightarrow \nu_\Phi (\eta_l^{a-b}y^{M}-1)=1.
\end{equation}
If $M=0$ and $a \neq b$, then $\eta_l^{a-b}-1 \neq 0$ and
$\nu_\Phi(\eta_l^{a-b}-1)=0$. If $M=0$ and $a=b$, then we must have $r=0$, because the classical hook length of a node is always non-zero. However, in this case, we are covered by the case $u=e=1$ below. 
Combining \eqref{combo1} and \eqref{combo2} with \eqref{claim}, we deduce that $\nu_\Phi(\varphi(s_{\ulambda[i]}({\bf q_i})))$ is equal to the number of elements of the multiset 
$$H({\ulambda})_i:=
\left\{ \begin{array}{ll}
\{h_{c,d}^{\lambda^a,\lambda^{b}}+s_a-s_b \,|\, a,b \in \mathcal{U}_i,\,(c,d) \in [\lambda^a]  \} & \text{if } e>1,\\ &\\
\{h_{c,d}^{\lambda^a,\lambda^{b}}+s_a-s_b \,|\, a,b \in \mathcal{U}_i,\, a \neq b,\,(c,d) \in [\lambda^a]  \} & \text{if } e=1
\end{array}\right.$$
 that are congruent to $0$ modulo $e$. If $e=1$, this number is simply the cardinality of the multiset $H({\ulambda})_i$, which is equal to $|\ulambda[i] |(m_i-1)$. 

\begin{Rem}\label{difchoice}
The value of $\nu_\Phi(\varphi(s_{\ulambda[i]}({\bf q_i})))$  is not affected by our choice of $(s_{a_{i,1}},\ldots,s_{a_{i,{m_i}}})$. If  $(s'_{a_{i,1}},\ldots,s'_{a_{i,{m_i}}}) \in \Z^{m_i}$ also satisfies $\xi_{a_{i,j}}/\xi_{a_{i,1}}=u^{s'_{a_{i,j}}}$, then 
 ${s_{a_{i,j}}}\equiv {s'_{a_{i,j}}} \text{ mod } e$, for all $j=1,\ldots,m_i$.
\end{Rem}

We can now use Formula \eqref{FormuladefectAK} to calculate $\nu_\Phi(\varphi(s_{\ulambda}({\bf q})))$. We obtain:
\begin{equation}\label{defAK}
\nu_\Phi(\varphi(s_{\ulambda}({\bf q}))) = \sum_{i=1}^t \sharp\{h \in H({\ulambda})_i\,\,|\,\, h\equiv 0\text{ mod } e\}.
\end{equation}
Therefore,  in order to study the defect in Ariki--Koike algebras,  it is enough to focus on specialised Ariki--Koike algebras of the form $\mathcal{H}^{\bf u}_{n}$, where  ${\bf u}=(u^{s_{0}},\ldots ,u^{s_{l-1}}\;;\, u)$ for some $(s_{0},\ldots ,s_{l-1})\in \mathbb{Z}^{l}$.  In particular, in order to prove  Theorem \ref{main-AK}, it is enough to prove the following: for $\ulambda, \umu \in \Pi^l(n)$, we have
\begin{equation}\label{equivmain}
\ulambda, \umu \text{ are in the same block of  }
\mathcal{H}^{\bf u}_{n} \Rightarrow 
\sharp\{h \in H({\ulambda})\,\,|\,\, h\equiv 0\text{ mod } e\}=\sharp\{h \in H({\umu})\,\,|\,\, h\equiv 0\text{ mod } e\},
\end{equation}
where
$$H({\ulambda}):=
\left\{ \begin{array}{ll}
\{h_{c,d}^{\lambda^a,\lambda^{b}}+s_a-s_b \,|\, a,b \in \{0,1,\ldots,l-1\},\,(c,d) \in [\lambda^a]  \} \} & \text{if } e>1,\\ &\\
\{h_{c,d}^{\lambda^a,\lambda^{b}}+s_a-s_b \,|\, a,b \in \{0,1,\ldots,l-1\},\,a\neq b,\,(c,d) \in [\lambda^a]  \} & \text{if } e=1.
\end{array}\right.$$
For $e=1$, \eqref{equivmain} is automatically true, since $\sharp\{h \in H({\ulambda})\,\,|\,\, h\equiv 0\text{ mod } e\} =\sharp H(\ulambda)= n(l-1)$ for all $\ulambda \in \Pi^l(n)$. Therefore, in the rest of the paper, we are only going to consider the case $e>1$.

\begin{Rem}\label{good Ariki-Koike}
Let us choose $(s_{0},\ldots ,s_{l-1})\in \mathbb{Z}^{l}$ so that all elements of the multiset $H({\ulambda})$  are non-zero, for all $\ulambda \in \Pi^l(n)$.  If we now consider the $\Z$-algebra morphism $\varphi: \mathbb{Z}[{\bf q},{\bf q}^{-1}] \rightarrow \mathbb{Z}[y,y^{-1}]$   such that
\begin{eqnarray*}\label{cy0}
&&\varphi(Q_i)=y^{s_{i}} \textrm{ for}\ i=0,1,\ldots ,l-1, \\
&&\varphi(q)=y, 
\end{eqnarray*}
then the algebra $\Q(y) (\mathcal{H}^{\bf q}_{n})_\varphi$ is split semisimple and the $e$-defect of $V^{\ulambda}$  is equal to the number of elements of $H({\ulambda})$ that are congruent to $0$ modulo $e$, for all $\ulambda \in \Pi^l(n)$.
\end{Rem}

\begin{Rem}
One could ask, especially after seeing Formula \eqref{defAK}, whether we can replace $\Phi$-defect with $e$-defect when $\eta=u$, for any cyclotomic Ariki--Koike algebra. We saw that it is the case when the Ariki--Koike algebra is as in the previous remark, and it is also the case when we deal with real reflection groups. However, in the complex case,   the $e$-defect is not always well-defined, because $K$-cyclotomic polynomials may not be the same as the cyclotomic polynomials over $\Q$. 

Let $l=3$ and consider the  cyclotomic Ariki--Koike algebra $(\mathcal{H}_2^{\bf q})_\varphi$, where 
$$\varphi(Q_0)=1,\,\,\,\varphi(Q_1)=\eta_3,\,\,\,\varphi(Q_2)=\eta_3^2 y \quad \text{and}
\quad \varphi(q)=y.$$
We have $K=\Q(\eta_3)$ and, over $K$, the minimal polynomials of the third roots of unity are of degree $1$.
For $\ulambda=(2,\emptyset,\emptyset)$, we have
$$\varphi(s_{\ulambda}({\bf q}))=-3\eta_3^2 y^{-1} (y+1)(y-\eta_3)^2,$$
and so $\nu_{y-\eta_3}(\varphi(s_{\ulambda}({\bf q})))=2$, while
$\nu_{y-\eta_3^2}(\varphi(s_{\ulambda}({\bf q})))=0$.  The $2$-defect of $V^{\ulambda}$ is equal to $1$.

Nevertheless, thanks to the work of Brundan and Kleshchev on cyclotomic quiver Hecke algebras \cite{BrKl}, we know that specialising $y$ to roots of unity of the same order yields isomorphic Ariki--Koike algebras. Therefore, the representation theory remains the same, even though the labelling of the representations might change.
In this particular example, for $\umu=(\emptyset,2,\emptyset)$, we have
$$\varphi(s_{\umu}({\bf q}))=-3\eta_3 y^{-1} (y+1)(y-\eta_3^2)^2,$$
and so the roles of $\ulambda$ and $\umu$ interchange, depending on whether $y \mapsto \eta_3$ or $y \mapsto \eta_3^2$.
\end{Rem}

\subsection{Charged  hook lengths and abaci}
We begin with several classical definitions. 
An {\it abacus}   is a subset $L$ of $\mathbb{Z}$
  such that $-i\in L$ and $i\notin L$ for all $i$ large enough.
    In a less formal way, each $i\in L$ corresponds to the position of a black bead
on the horizontal abacus  which is full of  black beads on the left and empty on the right. 
 One can associate to $\lambda \in \Pi$ and $s\in \mathbb{Z}$ an abacus $L_s (\lambda)$ 
  such that $k\in L_s (\lambda)$ if and only if there exists $j\in \mathbb{Z}_{>0}$ such that $k=\lambda_j-j+s+1$ (note that 
  $\lambda$  is assumed to have an
infinite number of zero parts). The elements of  $L_s (\lambda)$ 
 are called the \emph{$s$-charged $\beta$-numbers of $\lambda$}. Given an abacus $L$, one can easily find the unique partition $\lambda$ and the integer $s\in \mathbb{Z}$
 such that $L_s (\lambda)=L$. Indeed, each part of $\lambda$ corresponds to a black bead of the abacus and is equal to the number of empty positions at its left, while
 $s$ is the integer labelling the position obtained by the rightmost black bead  after sliding all the black beads in the abacus to the left. 
If  now we fix $m\in \mathbb{Z}_{>0}$ such that $\lambda_{m+s}=0$, we denote by $X[\lambda,m]$  the tuple of $s$-charged $\beta$-numbers 
$(\beta_1,\beta_2,\ldots,\beta_{m+s})$, where $\beta_j:=\lambda_j-j+s+1$ for all
$j \in \Z_{>0}$. Using the terminology of Remark \ref{tobeusedlater}, $\beta_j$ is the $s$-charged content of the rightmost box in the $j$-th row of the Young diagram of $\lambda$ (again, in order for this to hold even when the $j$-th row is empty, we may assume that the Young diagram of $\lambda$ has  an imaginary $0$-th column with an infinite amount of boxes). 
Moreover, note that $\beta_1>\beta_2>\dots>\beta_{m+s}$ and that $\beta_{m+s}=-m+1$.

\begin{exa}
Let us take the partition $\lambda=(5,4,2,1,1)$ and $s=0$.  The associated abacus $L_{0}(\lambda)$ may be represented as follows, where the positions  to the right 
 of the dashed vertical line are labelled by the non-negative  integers:

\[
\begin{tikzpicture}[scale=0.5, bb/.style={draw,circle,fill,minimum size=2.5mm,inner sep=0pt,outer sep=0pt}, wb/.style={draw,circle,fill=white,minimum size=2.5mm,inner sep=0pt,outer sep=0pt}]
\node [wb] at (11,2) {};
\node [wb] at (10,2) {};
\node [wb] at (9,2) {};
\node [wb] at (8,2) {};
\node [wb] at (7,2) {};
\node [bb] at (6,2) {};
\node [wb] at (5,2) {};
\node [bb] at (4,2) {};
\node [wb] at (3,2) {};
\node [wb] at (2,2) {};
\node [bb] at (1,2) {};
\node [wb] at (0,2) {};
\node [bb] at (-1,2) {};
\node [bb] at (-2,2) {};
\node [wb] at (-3,2) {};
\node [bb] at (-4,2) {};
\node [bb] at (-5,2) {};
\node [bb] at (-6,2) {};
\node [bb] at (-7,2) {};
\node [bb] at (-8,2) {};
\node [bb] at (-9,2) {};
\draw[dashed](0.5,1.5)--node[]{}(0.5,2.5);

	\node [] at (11,1) {10};
	\node [] at (10,1) {9};
	\node [] at (9,1) {8};
	\node [] at (8,1) {7};
	\node [] at (7,1) {6};
	\node [] at (6,1) {5};
	\node [] at (5,1) {4};
	\node [] at (4,1) {3};
	\node [] at (3,1) {2};
	\node [] at (2,1) {1};
	\node [] at (1,1) {0};
	\node [] at (0,1) {-1};
	\node [] at (-1,1) {-2};
	\node [] at (-2,1) {-3};
	\node [] at (-3,1) {-4};
	\node [] at (-4,1) {-5};
	\node [] at (-5,1) {-6};
	\node [] at (-6,1) {-7};
	\node [] at (-7,1) {-8};
	\node [] at (-8,1) {-9};
	\node [] at (-9,1) {-10};
\end{tikzpicture}
\]
We have
$X[\lambda,6]=(5,3,0,-2,-3,-5).
$
\end{exa}
Note that the abacus may be recovered from the datum of $X[\lambda,m]$ by setting  black beads in the positions given in $X[\lambda,m]$ and in all the positions  to the left of $\beta_{m+s}$. 

Now, let $\ulambda \in \Pi^l$ and ${\bf s}=(s_0,\ldots,s_{l-1})\in \mathbb{Z}^l$;
we refer to ${\bf s}$ as a \emph{multicharge}. One can associate to $\ulambda$ and ${\bf s}$ an  $l$-{\emph{abacus}} defined as the $l$-tuple $(L_{s_0} (\lambda^0),\ldots,L_{s_{l-1}} (\lambda^{l-1}))$. This $l$-abacus is pictured 
by stacking the abaci so that $L_{s_{i}} (\lambda^{i})$ is right on top of $L_{s_{i-1}} (\lambda^{i-1})$, for $i=1,\ldots,l-1$, and the positions are aligned vertically.
 If  we fix $m\in \mathbb{Z}_{>0}$ 
  such that $\lambda^i_{m+s_{i}}=0$ for all $i=0,1,\ldots,l-1$, then we denote by $X[\ulambda,m]$ the $l$-tuple $(X[\lambda^0,m],\ldots,X[\lambda^{l-1},m])$. 
  We will simply write $X^i$ for $X[\lambda^i,m]$ and we will denote by
  $(\beta^i_1,\ldots,\beta^i_{m+s_i})$ the elements of $X^i$, for all $i=0,1,\ldots,l-1$.

  \begin{exa}\label{firstabacus}
  Let $l=3$, $\ulambda=(2,1,1.1)$ and  ${\bf s}=(0,1,2)$. 
    The associated  $l$-abacus can be represented as follows:
\begin{center}
\begin{tikzpicture}[scale=0.5, bb/.style={draw,circle,fill,minimum size=2.5mm,inner sep=0pt,outer sep=0pt}, wb/.style={draw,circle,fill=white,minimum size=2.5mm,inner sep=0pt,outer sep=0pt}]
	
	\node [] at (11,-1) {10};
	\node [] at (10,-1) {9};
	\node [] at (9,-1) {8};
	\node [] at (8,-1) {7};
	\node [] at (7,-1) {6};
	\node [] at (6,-1) {5};
	\node [] at (5,-1) {4};
	\node [] at (4,-1) {3};
	\node [] at (3,-1) {2};
	\node [] at (2,-1) {1};
	\node [] at (1,-1) {0};
	\node [] at (0,-1) {-1};
	\node [] at (-1,-1) {-2};
	\node [] at (-2,-1) {-3};
	\node [] at (-3,-1) {-4};
	\node [] at (-4,-1) {-5};
	\node [] at (-5,-1) {-6};
	\node [] at (-6,-1) {-7};
	\node [] at (-7,-1) {-8};
	\node [] at (-8,-1) {-9};
	\node [] at (-9,-1) {-10};

	\node [wb] at (11,0) {};
	\node [wb] at (10,0) {};
	\node [wb] at (9,0) {};
	\node [wb] at (8,0) {};
	\node [wb] at (7,0) {};
	\node [wb] at (6,0) {};
	\node [wb] at (5,0) {};
	\node [wb] at (4,0) {};
	\node [bb] at (3,0) {};
	\node [wb] at (2,0) {};
	\node [wb] at (1,0) {};
	\node [bb] at (0,0) {};
	\node [bb] at (-1,0) {};
	\node [bb] at (-2,0) {};
	\node [bb] at (-3,0) {};
	\node [bb] at (-4,0) {};
	\node [bb] at (-5,0) {};
	\node [bb] at (-6,0) {};
	\node [bb] at (-7,0) {};
	\node [bb] at (-8,0) {};
	\node [bb] at (-9,0) {};
	
	\node [wb] at (11,1) {};
	\node [wb] at (10,1) {};
	\node [wb] at (9,1) {};
	\node [wb] at (8,1) {};
	\node [wb] at (7,1) {};
	\node [wb] at (6,1) {};
	\node [wb] at (5,1) {};
	\node [wb] at (4,1) {};
	\node [bb] at (3,1) {};
	\node [wb] at (2,1) {};
	\node [bb] at (1,1) {};
	\node [bb] at (0,1) {};
	\node [bb] at (-1,1) {};
	\node [bb] at (-2,1) {};
	\node [bb] at (-3,1) {};
	\node [bb] at (-4,1) {};
	\node [bb] at (-5,1) {};
	\node [bb] at (-6,1) {};
	\node [bb] at (-7,1) {};
	\node [bb] at (-8,1) {};
	\node [bb] at (-9,1) {};
	
	\node [wb] at (11,2) {};
	\node [wb] at (10,2) {};
	\node [wb] at (9,2) {};
	\node [wb] at (8,2) {};
	\node [wb] at (7,2) {};
	\node [wb] at (6,2) {};
	\node [wb] at (5,2) {};
	\node [bb] at (4,2) {};
	\node [bb] at (3,2) {};
	\node [wb] at (2,2) {};
	\node [bb] at (1,2) {};
	\node [bb] at (0,2) {};
	\node [bb] at (-1,2) {};
	\node [bb] at (-2,2) {};
	\node [bb] at (-3,2) {};
	\node [bb] at (-4,2) {};
	\node [bb] at (-5,2) {};
	\node [bb] at (-6,2) {};
	\node [bb] at (-7,2) {};
	\node [bb] at (-8,2) {};
	\node [bb] at (-9,2) {};
	\end{tikzpicture}
\end{center}
We have   
$X[\ulambda,3]=((2,-1,-2),(2,0,-1,-2),(3,2,0,-1,-2)).$
  \end{exa}
  
  Obviously, the $l$-abacus can be easily recovered from the datum 
   of $X[\ulambda,m]$.  From now on, we will often switch from one representation using abaci to the one  using $\beta$-numbers. 

Let $a,b \in \{0,1,\ldots,l-1\}$ and $(c,d) \in  [\lambda^a]$. We define the ${\bf s}$-\emph{charged hook length of $(c,d)$ with respect to} $(\lambda^a,\lambda^b)$ to be the integer
$$ch_{c,d}^{\lambda^a,\lambda^{b}}:=h_{c,d}^{\lambda^a,\lambda^{b}}+s_a-s_b.$$
 We have
$$ch_{c,d}^{\lambda^a,\lambda^{b}}= \lambda^a_c-c+{\lambda^b}'_d-d+1+s_a-s_b
=(\lambda^a_c-c+s_a+1)-(d-{\lambda^b}'_d+s_b+1)+1
$$
We  observe that $\lambda^a_c-c+s_a+1=\beta^a_c$, while 
$d-{\lambda^b}'_d+s_b+1$ is the $s_b$-charged content of the box at the bottom of the $d$-th column of the Young diagram of $\lambda^b$ (cf.~Remark \ref{tobeusedlater}).
The latter coincides with the   leftmost position in the component $b$ of the abacus that has exactly $d$ empty positions at its left. In fact,
$d-{\lambda^b}'_d+s_b$ is the $d$-th empty position in the component $b$ of the abacus, reading from left to right.
Therefore, we can use the  set of $\beta$-numbers and the associated abacus decomposition  of our $l$-partition in order to easily compute the elements of the multiset
$$H(\ulambda):=\{ch_{c,d}^{\lambda^a,\lambda^{b}}\,|\,a,b \in \{0,1,\ldots,l-1\},\,(c,d) \in [\lambda^a]  \}$$
as follows:
\begin{itemize}
\item Let $\beta$ be the position of a black bead. \smallbreak
\item Let $\delta(\beta)$ be the number of empty positions to the left of $\beta$.
\smallbreak
\item For $b=0,1\ldots,l-1$ and $d=1,\ldots,\delta(\beta)$, let $y^b_d$ be the $d$-th empty position in the component $b$ of the abacus, reading from left to right.\smallbreak
\item  For $b=0,1\ldots,l-1$ , define $H^b(\beta)$ to be the set
$\{\beta - y^b_d\,|\,d=1,\ldots,\delta(\beta)\}$.
Note that:  \smallbreak
\begin{itemize}
\item if $\delta(\beta)=0$, then $H^b(\beta)=\emptyset$;
\item if $\delta(\beta)\neq 0$ and $\beta=\beta_c^a$, then
$\beta - y^b_d=ch_{c,d}^{\lambda^a,\lambda^{b}}$. 
\end{itemize}

\item Denote by $H(\beta)$ the multiset defined by the union $\bigcup_{b=0}^{l-1} H^b(\beta)$. \smallbreak
\item The multiset $H(\ulambda)$ is the union of the multisets $H(\beta)$, where $\beta$ runs over the positions of black beads
 (that is, the $\beta$-numbers $\beta^a_c$). It is enough to consider the black beads that have at least one empty position at their left  (that is, $\lambda^a_c \neq 0$), since otherwise $H(\beta)=\emptyset$.
\end{itemize}

\begin{exa}\label{second abacus}
Let $l=2$, $\ulambda=(3.1,2.1.1)$ and ${\bf s}=(0,2)$. We write the associated $2$-abacus as follows:\smallbreak

\begin{center}
\begin{tikzpicture}[scale=0.5, bb/.style={draw,circle,fill,minimum size=2.5mm,inner sep=0pt,outer sep=0pt}, wb/.style={draw,circle,fill=white,minimum size=2.5mm,inner sep=0pt,outer sep=0pt}]
	
	\node [] at (11,-1) {10};
	\node [] at (10,-1) {9};
	\node [] at (9,-1) {8};
	\node [] at (8,-1) {7};
	\node [] at (7,-1) {6};
	\node [] at (6,-1) {5};
	\node [] at (5,-1) {4};
	\node [] at (4,-1) {3};
	\node [] at (3,-1) {2};
	\node [] at (2,-1) {1};
	\node [] at (1,-1) {0};
	\node [] at (0,-1) {-1};
	\node [] at (-1,-1) {-2};
	\node [] at (-2,-1) {-3};
	\node [] at (-3,-1) {-4};
	\node [] at (-4,-1) {-5};
	\node [] at (-5,-1) {-6};
	\node [] at (-6,-1) {-7};
	\node [] at (-7,-1) {-8};
	\node [] at (-8,-1) {-9};
	\node [] at (-9,-1) {-10};

	\node [wb] at (11,0) {};
	\node [wb] at (10,0) {};
	\node [wb] at (9,0) {};
	\node [wb] at (8,0) {};
	\node [wb] at (7,0) {};
	\node [wb] at (6,0) {};
	\node [wb] at (5,0) {};
	\node [bb] at (4,0) {};
	\node [wb] at (3,0) {};
	\node [wb] at (2,0) {};
	\node [bb] at (1,0) {};
	\node [wb] at (0,0) {};
	\node [bb] at (-1,0) {};
	\node [bb] at (-2,0) {};
	\node [bb] at (-3,0) {};
	\node [bb] at (-4,0) {};
	\node [bb] at (-5,0) {};
	\node [bb] at (-6,0) {};
	\node [bb] at (-7,0) {};
	\node [bb] at (-8,0) {};
	\node [bb] at (-9,0) {};
	
	\node [wb] at (11,1) {};
	\node [wb] at (10,1) {};
	\node [wb] at (9,1) {};
	\node [wb] at (8,1) {};
	\node [wb] at (7,1) {};
	\node [wb] at (6,1) {};
	\node [bb] at (5,1) {};
	\node [wb] at (4,1) {};
	\node [bb] at (3,1) {};
	\node [bb] at (2,1) {};
	\node [wb] at (1,1) {};
	\node [bb] at (0,1) {};
	\node [bb] at (-1,1) {};
	\node [bb] at (-2,1) {};
	\node [bb] at (-3,1) {};
	\node [bb] at (-4,1) {};
	\node [bb] at (-5,1) {};
	\node [bb] at (-6,1) {};
	\node [bb] at (-7,1) {};
	\node [bb] at (-8,1) {};
	\node [bb] at (-9,1) {};

	\end{tikzpicture}
\end{center}

We begin with $\beta^0_1=3$. We observe that $\beta^0_1$ has $3$ empty positions to its left, and so $\delta(\beta^0_1)=3$.  
We have
$$y^0_1=-1,\,\, y^0_2=1, \,\, y^0_3=2, \,\, y^1_1=0, \,\,  y^1_2=3, \,\, y^1_3=5$$
ans so
$$H(\beta^0_1)= \{\beta^0_1-y^b_1, 
\beta^0_1-y^b_2, \beta^0_1-y^b_3  \,|\, b=0,1\}
 =\{3+1, 3-1, 3-2, 3-0, 3-3,3-5\}=\{4,2,1,3,0,-2\}$$
 
One may find it more convenient to compute  this set  the other way round.   We take the leftmost empty position  in the component $0$, labelled by $-1$, and the leftmost empty position  in the component $1$, labelled by $0$.

\begin{center}
\begin{tikzpicture}[scale=0.5, rb/.style={draw,circle,fill=red,minimum size=2.5mm,inner sep=0pt,outer sep=0pt}, bb/.style={draw,circle,fill,minimum size=2.5mm,inner sep=0pt,outer sep=0pt}, wb/.style={draw,circle,fill=white,minimum size=2.5mm,inner sep=0pt,outer sep=0pt}]
	
	\node [] at (11,-1) {10};
	\node [] at (10,-1) {9};
	\node [] at (9,-1) {8};
	\node [] at (8,-1) {7};
	\node [] at (7,-1) {6};
	\node [] at (6,-1) {5};
	\node [] at (5,-1) {4};
	\node [] at (4,-1) {3};
	\node [] at (3,-1) {2};
	\node [] at (2,-1) {1};
	\node [] at (1,-1) {0};
	\node [] at (0,-1) {-1};
	\node [] at (-1,-1) {-2};
	\node [] at (-2,-1) {-3};
	\node [] at (-3,-1) {-4};
	\node [] at (-4,-1) {-5};
	\node [] at (-5,-1) {-6};
	\node [] at (-6,-1) {-7};
	\node [] at (-7,-1) {-8};
	\node [] at (-8,-1) {-9};
	\node [] at (-9,-1) {-10};

	\node [wb] at (11,0) {};
	\node [wb] at (10,0) {};
	\node [wb] at (9,0) {};
	\node [wb] at (8,0) {};
	\node [wb] at (7,0) {};
	\node [wb] at (6,0) {};
	\node [wb] at (5,0) {};
	\node [bb] at (4,0) {};
	\node [wb] at (3,0) {};
	\node [wb] at (2,0) {};
	\node [bb] at (1,0) {};
	\node [rb] at (0,0) {};
	\node [bb] at (-1,0) {};
	\node [bb] at (-2,0) {};
	\node [bb] at (-3,0) {};
	\node [bb] at (-4,0) {};
	\node [bb] at (-5,0) {};
	\node [bb] at (-6,0) {};
	\node [bb] at (-7,0) {};
	\node [bb] at (-8,0) {};
	\node [bb] at (-9,0) {};
	
	\node [wb] at (11,1) {};
	\node [wb] at (10,1) {};
	\node [wb] at (9,1) {};
	\node [wb] at (8,1) {};
	\node [wb] at (7,1) {};
	\node [wb] at (6,1) {};
	\node [bb] at (5,1) {};
	\node [wb] at (4,1) {};
	\node [bb] at (3,1) {};
	\node [bb] at (2,1) {};
	\node [rb] at (1,1) {};
	\node [bb] at (0,1) {};
	\node [bb] at (-1,1) {};
	\node [bb] at (-2,1) {};
	\node [bb] at (-3,1) {};
	\node [bb] at (-4,1) {};
	\node [bb] at (-5,1) {};
	\node [bb] at (-6,1) {};
	\node [bb] at (-7,1) {};
	\node [bb] at (-8,1) {};
	\node [bb] at (-9,1) {};

	\end{tikzpicture}
\end{center}
So at this stage, we have $\{\beta^0_1-y_1^0,\beta^0_1-y_1^1\}=\{3+1=4,3-0=3\}$. We then consider the second leftmost empty position in each of the two components. \smallbreak

\begin{center}
\begin{tikzpicture}[scale=0.5, rb/.style={draw,circle,fill=red,minimum size=2.5mm,inner sep=0pt,outer sep=0pt}, bb/.style={draw,circle,fill,minimum size=2.5mm,inner sep=0pt,outer sep=0pt}, wb/.style={draw,circle,fill=white,minimum size=2.5mm,inner sep=0pt,outer sep=0pt}]
	
	\node [] at (11,-1) {10};
	\node [] at (10,-1) {9};
	\node [] at (9,-1) {8};
	\node [] at (8,-1) {7};
	\node [] at (7,-1) {6};
	\node [] at (6,-1) {5};
	\node [] at (5,-1) {4};
	\node [] at (4,-1) {3};
	\node [] at (3,-1) {2};
	\node [] at (2,-1) {1};
	\node [] at (1,-1) {0};
	\node [] at (0,-1) {-1};
	\node [] at (-1,-1) {-2};
	\node [] at (-2,-1) {-3};
	\node [] at (-3,-1) {-4};
	\node [] at (-4,-1) {-5};
	\node [] at (-5,-1) {-6};
	\node [] at (-6,-1) {-7};
	\node [] at (-7,-1) {-8};
	\node [] at (-8,-1) {-9};
	\node [] at (-9,-1) {-10};

	\node [wb] at (11,0) {};
	\node [wb] at (10,0) {};
	\node [wb] at (9,0) {};
	\node [wb] at (8,0) {};
	\node [wb] at (7,0) {};
	\node [wb] at (6,0) {};
	\node [wb] at (5,0) {};
	\node [bb] at (4,0) {};
	\node [wb] at (3,0) {};
	\node [rb] at (2,0) {};
	\node [bb] at (1,0) {};
	\node [wb] at (0,0) {};
	\node [bb] at (-1,0) {};
	\node [bb] at (-2,0) {};
	\node [bb] at (-3,0) {};
	\node [bb] at (-4,0) {};
	\node [bb] at (-5,0) {};
	\node [bb] at (-6,0) {};
	\node [bb] at (-7,0) {};
	\node [bb] at (-8,0) {};
	\node [bb] at (-9,0) {};
	
	\node [wb] at (11,1) {};
	\node [wb] at (10,1) {};
	\node [wb] at (9,1) {};
	\node [wb] at (8,1) {};
	\node [wb] at (7,1) {};
	\node [wb] at (6,1) {};
	\node [bb] at (5,1) {};
	\node [rb] at (4,1) {};
	\node [bb] at (3,1) {};
	\node [bb] at (2,1) {};
	\node [wb] at (1,1) {};
	\node [bb] at (0,1) {};
	\node [bb] at (-1,1) {};
	\node [bb] at (-2,1) {};
	\node [bb] at (-3,1) {};
	\node [bb] at (-4,1) {};
	\node [bb] at (-5,1) {};
	\node [bb] at (-6,1) {};
	\node [bb] at (-7,1) {};
	\node [bb] at (-8,1) {};
	\node [bb] at (-9,1) {};

	\end{tikzpicture}
\end{center}
We obtain $\{\beta^0_1-y_2^0,\beta^0_1-y_2^1\}=\{3-1=2,3-3=0\}$. We finally do the same for the third leftmost empty positions.
\begin{center}
\begin{tikzpicture}[scale=0.5, rb/.style={draw,circle,fill=red,minimum size=2.5mm,inner sep=0pt,outer sep=0pt}, bb/.style={draw,circle,fill,minimum size=2.5mm,inner sep=0pt,outer sep=0pt}, wb/.style={draw,circle,fill=white,minimum size=2.5mm,inner sep=0pt,outer sep=0pt}]
	
	\node [] at (11,-1) {10};
	\node [] at (10,-1) {9};
	\node [] at (9,-1) {8};
	\node [] at (8,-1) {7};
	\node [] at (7,-1) {6};
	\node [] at (6,-1) {5};
	\node [] at (5,-1) {4};
	\node [] at (4,-1) {3};
	\node [] at (3,-1) {2};
	\node [] at (2,-1) {1};
	\node [] at (1,-1) {0};
	\node [] at (0,-1) {-1};
	\node [] at (-1,-1) {-2};
	\node [] at (-2,-1) {-3};
	\node [] at (-3,-1) {-4};
	\node [] at (-4,-1) {-5};
	\node [] at (-5,-1) {-6};
	\node [] at (-6,-1) {-7};
	\node [] at (-7,-1) {-8};
	\node [] at (-8,-1) {-9};
	\node [] at (-9,-1) {-10};

	\node [wb] at (11,0) {};
	\node [wb] at (10,0) {};
	\node [wb] at (9,0) {};
	\node [wb] at (8,0) {};
	\node [wb] at (7,0) {};
	\node [wb] at (6,0) {};
	\node [wb] at (5,0) {};
	\node [bb] at (4,0) {};
	\node [rb] at (3,0) {};
	\node [wb] at (2,0) {};
	\node [bb] at (1,0) {};
	\node [wb] at (0,0) {};
	\node [bb] at (-1,0) {};
	\node [bb] at (-2,0) {};
	\node [bb] at (-3,0) {};
	\node [bb] at (-4,0) {};
	\node [bb] at (-5,0) {};
	\node [bb] at (-6,0) {};
	\node [bb] at (-7,0) {};
	\node [bb] at (-8,0) {};
	\node [bb] at (-9,0) {};
	
	\node [wb] at (11,1) {};
	\node [wb] at (10,1) {};
	\node [wb] at (9,1) {};
	\node [wb] at (8,1) {};
	\node [wb] at (7,1) {};
	\node [rb] at (6,1) {};
	\node [bb] at (5,1) {};
	\node [wb] at (4,1) {};
	\node [bb] at (3,1) {};
	\node [bb] at (2,1) {};
	\node [wb] at (1,1) {};
	\node [bb] at (0,1) {};
	\node [bb] at (-1,1) {};
	\node [bb] at (-2,1) {};
	\node [bb] at (-3,1) {};
	\node [bb] at (-4,1) {};
	\node [bb] at (-5,1) {};
	\node [bb] at (-6,1) {};
	\node [bb] at (-7,1) {};
	\node [bb] at (-8,1) {};
	\node [bb] at (-9,1) {};

	\end{tikzpicture}
\end{center}
We obtain $\{\beta^0_1-y_3^0,\beta^0_1-y_3^1\}=\{3-2=1,3-5=-2\}$ and so
$H(\beta^0_1)=\{4,3,2,0,1,-2\}$. 

Continuing this way, we get:

\begin{itemize}
 \item
 $H(\beta^0_2)=\{\beta^0_2-y^b_1 \,|\, b=0,1\}=
\{0+1,0-0\}=\{1,0\}$,\smallbreak

\item
$H(\beta^1_1)=\{\beta^1_1-y^b_1,  \beta^1_1-y^b_2 \,|\, b=0,1\}=\{4+1,4-1,4-0,4-3\}=\{5,3,4,1\}$,\smallbreak

\item
$H(\beta^1_2)=\{\beta^1_2-y^b_1 \,|\, b=0,1\}=\{2+1,2-0\}=\{3,2\}$,\smallbreak

\item
$H(\beta^1_3)=\{\beta^1_3-y^b_1 \,|\, b=0,1\}=
\{1+1,1-0\}=\{2,1\}.$\smallbreak
\end{itemize}
We obtain
 $${H} (\ulambda)=\{-2,0,0,1,1,1,1,2,2,2,3,3,3,4,4,5\}.$$
\end{exa}

\begin{exa}\label{firstabacusprime}
Let us also work out Example \ref{firstabacus}. We have
$$y^0_1=0,\,\, y^0_2=1, \,\, y^1_1=1, \,\, y^1_2=3, \,\,  y^2_1=1, \,\, y^2_2=4$$
and so
\begin{itemize}
\item $H(\beta^0_1)= \{\beta^0_1-y^b_1, 
\beta^0_1-y^b_2 \,|\, b=0,1,2\}
 =\{2-0, 2-1, 2-1, 2-3, 2-1,2-4\}=\{2,1,1,-1,1,-2\}$,\smallbreak
 \item
 $H(\beta^1_1)=\{\beta^1_1-y^b_1 \,|\, b=0,1,2\}=
\{2-0, 2-1, 2-1\}=\{2,1,1\}$,\smallbreak

\item
$H(\beta^2_1)=\{\beta^2_1-y^b_1 \,|\, b=0,1,2\}=
\{3-0, 3-1, 3-1\}=\{3,2,2\}$,\smallbreak

\item
$H(\beta^2_2)=\{\beta^2_2-y^b_1 \,|\, b=0,1,2\}=\{2-0, 2-1, 2-1\}=\{2,1,1\}$.\smallbreak

\end{itemize}
We obtain
 $${H} (\ulambda)=\{-2,-1,1,1,1,1,1,1,1,2,2,2,2,2,3\}.$$
\end{exa}

\subsection{Residue classes of charged hook lengths}
Let $\ulambda \in \Pi^l$, ${\bf s}=(s_0,\ldots,s_{l-1})\in \mathbb{Z}^l$ and $e \in  \mathbb{Z}_{>1}$.  We consider the $l$-abacus  associated to the pair $(\ulambda,{\bf s})$ and the set of $\beta$-numbers $X:=X[\ulambda,m]=(X^0,\ldots,X^{l-1})$. Recall that
 $\min(X^c)=1-m$ for all $c=0,\ldots,l-1$.
The aim of this subsection is to show how one can 
use $X$ to compute the number of charged  hook lengths congruent to $0$ modulo $e$. In order to do this, we will assume that ${\bf s}$ belongs to
 $$\mathcal{A}_e^l=\{ (s_0,\ldots,s_{l-1})\in \mathbb{Z}^l \ |\ s_0\leq s_1 \leq \ldots\leq s_{l-1} \leq s_0+e\},$$
 since permuting the entries of the multicharge or taking other representatives of their congruence classes modulo $e$ does not affect the Ariki--Koike algebra.

   We use the notation of the previous subsection for the description of the elements of $H(\ulambda)$. We have
\begin{equation}\label{general}
H(\ulambda) = \bigcup_{c_1=0}^{l-1} \bigcup_{c_2=0}^{l-1} 
\bigcup_{x \in {X}^{c_1}}
 H^{c_2}(x),
 \end{equation}
where $ H^{c_2}(x)= \{ x-y_d^{c_2} \,|\,d=1,\ldots,\delta(x)\}$. Note that  $ H^{c_2}(x)$ contains $\delta(x)$ distinct elements. The following lemma, which gives a necessary and sufficient condition for $0$ to belong to $ H^{c_2}(x)$, will be useful later on.

\begin{lemma}\label{when0}
Let
$(c_1,c_2)\in \{0,\ldots,l-1\}^2$ and let $x \in X^{c_1}$. 
We have $0 \in H^{c_2}(x)$  if and only if the following two conditions are satisfied:
\begin{enumerate}[(i)]
\item $x \notin X^{c_2}$; \smallbreak 
\item $\sharp\{ y\in X^{c_1} \ |\  y<x\} <
\sharp \{ y\in X^{c_2} \ |\  y<x\}$.
\end{enumerate}
\end{lemma}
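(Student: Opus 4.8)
The plan is to unwind the definition of $H^{c_2}(x)$ in terms of the abacus. Recall that $H^{c_2}(x) = \{x - y_d^{c_2} \mid d = 1, \ldots, \delta(x)\}$, where $\delta(x)$ is the number of empty positions to the left of $x$ in the abacus and $y_d^{c_2}$ is the $d$-th empty position (reading left to right) in component $c_2$. So $0 \in H^{c_2}(x)$ if and only if $x = y_d^{c_2}$ for some $d \in \{1, \ldots, \delta(x)\}$; that is, if and only if $x$ is an empty position in component $c_2$ (which is exactly condition (i), since $x \notin X^{c_2}$ means there is no black bead at position $x$ in component $c_2$) \emph{and} $x$ is among the first $\delta(x)$ empty positions of component $c_2$.

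First I would observe that $\delta(x)$ is an invariant of $x$ and the whole abacus, not of any single component: since every component has black beads in all sufficiently negative positions, $\delta(x) = \sharp\{z < x \mid z \notin X^{c_1}\}$ for the component $c_1$ containing the bead at $x$. Next, assuming (i) holds, the position $x$ is empty in component $c_2$, and $x$ is the $k$-th empty position of component $c_2$ where $k = \sharp\{z \le x \mid z \notin X^{c_2}\} = \sharp\{z < x \mid z \notin X^{c_2}\} + 1$ (the $+1$ counting $x$ itself). Thus $x = y_k^{c_2}$, and $0 \in H^{c_2}(x)$ if and only if $k \le \delta(x)$, i.e.
$$\sharp\{z < x \mid z \notin X^{c_2}\} + 1 \le \sharp\{z < x \mid z \notin X^{c_1}\}.$$

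The final step is to convert "number of empty positions below $x$" into "number of black beads below $x$", which is the form appearing in condition (ii). Since $X^{c_1}$ and $X^{c_2}$ are both abaci with $\min(X^{c_1}) = \min(X^{c_2}) = 1 - m$ and all positions $< 1-m$ filled, for any component $c$ the positions $z < x$ split into black beads (counted by $\sharp\{y \in X^c \mid y < x\}$) and empty positions, with the total $\sharp\{z \in \Z \mid 1 - m \le z < x\} = x - 1 + m$ being the same for both components. Hence $\sharp\{z < x \mid z \notin X^{c}\} = (x - 1 + m) - \sharp\{y \in X^c \mid y < x\}$, and substituting this for $c = c_1$ and $c = c_2$ into the displayed inequality, the common term $x - 1 + m$ cancels, the signs flip, and the inequality becomes exactly condition (ii). I expect no real obstacle here; the only thing to be careful about is the off-by-one bookkeeping — whether $x$ itself should be counted among the empty positions of component $c_2$ (it should, contributing the $+1$ that produces the strict inequality in (ii)) versus among the positions below $x$ (it should not).
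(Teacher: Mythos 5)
Your proposal is correct and follows essentially the same route as the paper's proof: both reduce $0\in H^{c_2}(x)$ to the comparison of the numbers of empty positions strictly below $x$ in components $c_1$ and $c_2$ (the former being $\delta(x)$), and then translate these counts into counts of $\beta$-numbers below $x$ by complementation inside the window $\{z\in\Z \mid 1-m< z<x\}$. The only caveat is notational: in your displayed inequality the sets $\{z<x \mid z\notin X^{c}\}$ must be read inside this window (equivalently, with $X^{c}$ replaced by the full abacus, all positions $<1-m$ being filled), exactly as you do in your final conversion step.
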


\begin{proof}
If $x \in X^{c_2}$, then $0 \notin H^{c_2}(x)$. If $x \notin X^{c_2}$ and
$\sharp\{ y\in X^{c_1} \ |\  y<x\} \geq 
\sharp \{ y\in X^{c_2} \ |\  y<x\},$ 
then
$$ \sharp\{ y\in \Z\setminus X^{c_1} \ |\ 1-m< y<x\} \leq 
\sharp \{ y\in \Z\setminus X^{c_2} \ |\ 1-m<y<x\}.$$
We have $\delta(x)= \sharp\{ y\in \Z\setminus X^{c_1} \ |\  1-m<y<x\}$, whence $y_d^{c_2} <x$ for all $d=1,\ldots,\delta(x)$. We conclude that  
$0 \notin H^{c_2}(x)$. 

Suppose now that Conditions (i) and (ii) are satisfied. Then
$$ \sharp\{ y\in \Z\setminus X^{c_1} \ |\ 1-m< y<x\} >
\sharp \{ y\in \Z\setminus X^{c_2} \ |\ 1-m<y<x\}.$$
Since $\delta(x)= \sharp\{ y\in \Z\setminus X^{c_1} \ |\  1-m<y<x\}$ and $x \notin X^{c_2}$, there exists $d \in \{1,\ldots,\delta(x)\}$ such that $y_d^{c_2} =x$. Hence, $0 \in H^{c_2}(x)$. 
\end{proof}

Let $c \in\{0,1,\ldots,l-1\}$ and $x\in X^c$. Then, for $k\in \mathbb{Z}_{\geq 0}$, we set:
$$\mathcal{N}_k (x):=\left\{\begin{array}{ll}
    \sharp\{ t \in \{c+1,\ldots,l-1\} \ |\ x\notin X^t \}& \text{ if } k=0;    \\
    \sharp\{ t \in \{0,\ldots,l-1\} \ |\ x-ke\notin X^t \cup \Z_{\leq -m} \}& \text{ if } k>0.
    \end{array}\right.$$
The main result of this section is Proposition \ref{nu}, which produces the number of charged  hook lengths in $H(\ulambda)$ that are  congruent to $0$ modulo $e$ in terms of the numbers $\mathcal{N}_k (x)$. In order to prove it, we will first compute the number of charged  hook lengths equal to $0$ --- we can refer to this as the  case $e=\infty$.

\begin{Prop}\label{nuinfty}
The number of charged hook lengths in $H(\ulambda)$ equal to $0$ is
$$\sum_{0\leq c\leq l-1} \sum_{x \in X^c}   \mathcal{N}_0 (x).$$
\end{Prop}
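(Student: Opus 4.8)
The plan is to count, for each black bead $x$ in the $l$-abacus, how many of the charged hook lengths $\beta-y^{c_2}_d$ contributed by $x$ are equal to zero, and then to sum over all beads, using the decomposition \eqref{general} of $H(\ulambda)$. Fix $c_1 \in \{0,\ldots,l-1\}$ and $x \in X^{c_1}$. By Lemma \ref{when0}, for a fixed second component $c_2$ the multiset $H^{c_2}(x)$ contains $0$ (and then exactly once, since $H^{c_2}(x)$ consists of $\delta(x)$ \emph{distinct} elements) precisely when $x \notin X^{c_2}$ and $\sharp\{y \in X^{c_1} \mid y < x\} < \sharp\{y \in X^{c_2} \mid y < x\}$. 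So the number of zeros among all the charged hook lengths attached to the bead $x$ equals the number of indices $c_2 \in \{0,\ldots,l-1\}$ for which these two conditions hold, and the total count in $H(\ulambda)$ is the sum of this quantity over all $c_1$ and all $x \in X^{c_1}$.

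The first step is therefore to replace the pair of conditions in Lemma \ref{when0} by a single clean condition. Recall that $\min(X^c) = 1-m$ for every $c$, so each $X^c$ contains all of $\Z_{\leq 1-m}$ below its minimum and the ``small'' parts of every $X^c$ agree. Hence for $y < x$ the sets $X^{c_1}$ and $X^{c_2}$ differ only on the finite window $\{1-m, 1-m+1, \ldots\}$, and counting members of $X^{c_1}$ below $x$ versus members of $X^{c_2}$ below $x$ is, after subtracting the common tail, the same as comparing $\delta_{c_1}(x) := \sharp\{y \in \Z \setminus X^{c_1} \mid 1-m < y < x\}$ with $\delta_{c_2}(x)$; indeed condition (ii) is equivalent to $\delta_{c_1}(x) > \delta_{c_2}(x)$, as is already extracted inside the proof of Lemma \ref{when0}. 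I would then argue that, \emph{given} $x \in X^{c_1}$ with $x \notin X^{c_2}$, condition (ii) is automatic whenever $c_2 > c_1$ and fails whenever $c_2 < c_1$; this is exactly what makes $\mathcal{N}_0(x)$ range only over $t \in \{c+1,\ldots,l-1\}$.

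The heart of the argument — and the step I expect to be the main obstacle — is precisely this monotonicity claim: for $x$ a black bead in component $c_1$ with $x \notin X^{c_2}$ (i.e. a white bead in component $c_2$ at position $x$), one has $\delta_{c_1}(x) < \delta_{c_2}(x)$ if $c_2 > c_1$ and $\delta_{c_1}(x) > \delta_{c_2}(x)$ if $c_2 < c_1$. The input here is the hypothesis ${\bf s} \in \mathcal{A}_e^l$, so $s_0 \leq s_1 \leq \cdots \leq s_{l-1}$; combined with the fact that in each component the number of white beads strictly to the left of position $x$ equals $(x-1) - (\text{number of black beads strictly left of } x)$ and that the total number of black beads in component $t$ up to any fixed position is governed by $s_t$ (more precisely $\sharp\{\,y \in X^t \mid y \geq 1-m\,\} = m + s_t$, so larger charge means more black beads, hence fewer white beads, to the left of any given position). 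One has to be a little careful because having a black bead at $x$ in component $c_1$ and a white bead at $x$ in component $c_2$ shifts these counts by one; writing everything in terms of $\sharp\{y \in X^t \mid y < x\}$ and using $s_{c_1} \leq s_{c_2}$ (resp. $s_{c_1} \geq s_{c_2}$) together with the observation that position $x$ itself is occupied in $c_1$ but not in $c_2$ gives the \emph{strict} inequalities in both directions. Once this monotonicity is in place, Lemma \ref{when0} shows that for $x \in X^c$ the set of valid $c_2$ is exactly $\{\,t \in \{c+1,\ldots,l-1\} \mid x \notin X^t\,\}$, whose cardinality is $\mathcal{N}_0(x)$ by definition, and summing over $c$ and over $x \in X^c$ yields the stated formula $\sum_{0\leq c\leq l-1} \sum_{x \in X^c} \mathcal{N}_0(x)$.
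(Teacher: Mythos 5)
Your setup (the decomposition \eqref{general} of $H(\ulambda)$ plus Lemma \ref{when0}, and the remark that each $H^{c_2}(x)$ can contain $0$ at most once) matches the paper, but the step you single out as the heart of the argument is wrong: it is \emph{not} true that for $x \in X^{c_1}$ with $x \notin X^{c_2}$ condition (ii) of Lemma \ref{when0} holds automatically when $c_2 > c_1$ and fails when $c_2 < c_1$, even for strictly increasing multicharges. Take $l=2$, ${\bf s}=(0,1)$, $\ulambda=(\emptyset,(1,1))$ and $m=2$, so $X^0=(0,-1)$ and $X^1=(2,1,-1)$. For the bead $x=1\in X^1$ and $c_2=0<1$ we have $x\notin X^0$ and $\sharp\{y\in X^1\mid y<1\}=1<2=\sharp\{y\in X^0\mid y<1\}$, so $0\in H^0(1)$ (indeed $H^0(1)=\{0\}$), although the target component has the smaller index and the smaller charge. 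Dually, for $x=0\in X^0$ and $c_2=1>0$ we have $x\notin X^1$ but $\sharp\{y\in X^0\mid y<0\}=1=\sharp\{y\in X^1\mid y<0\}$, so $0\notin H^1(0)$ (in fact $\delta(0)=0$, so $H^1(0)=\emptyset$). So both halves of your monotonicity claim fail: a larger charge gives more black beads in the whole window, but not a position-by-position strict inequality of the counts $\sharp\{y\in X^t\mid y<x\}$, which is what your localisation would need.

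What is true, and what the paper proves, is an aggregate statement for each pair $c_1<c_2$: setting $M=s_{c_2}-s_{c_1}$, $X^{c_1}\setminus X^{c_2}=\{x_1>\dots>x_N\}$ and $X^{c_2}\setminus X^{c_1}=\{z_1>\dots>z_{M+N}\}$, Lemma \ref{when0} shows that no zero arises from $z_1,\dots,z_M$, while for each $i>M$ \emph{exactly one} of $H^{c_2}(x_{i-M})$ and $H^{c_1}(z_i)$ contains $0$, according to whether $x_{i-M}>z_i$ or $x_{i-M}<z_i$. Hence the pair contributes exactly $N=\sharp\{x\in X^{c_1}\mid x\notin X^{c_2}\}$ zeros in total, with individual zeros possibly attached to beads of either component, and summing over pairs gives $\sum_{0\leq c\leq l-1}\sum_{x\in X^c}\mathcal{N}_0(x)$. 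In the example above the unique zero of $H(\ulambda)$ sits in $H^0(1)$ with $1\in X^1$, yet it is accounted for by $\mathcal{N}_0(0)=1$ for the bead $0\in X^0$: the formula is a global bookkeeping identity across each pair of components, not a bead-by-bead count, so your argument cannot be repaired without replacing the monotonicity claim by such a pairing (or an equivalent counting) argument.
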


\begin{proof}
Let $c \in \{0,\ldots,l-1\}$.
First, we observe that if $x \in {X}^c$, then $0 \notin H^c(x)$. 

Now let
$(c_1,c_2)\in \{0,\ldots,l-1\}^2$ such that $c_1<c_2$
and let $\mathcal{N}_0(X^{c_1},X^{c_2})$
be the  number of elements equal to $0$ inside
$$H(X^{c_1},X^{c_2}):=\left(\bigcup_{x \in {X}^{c_1}}
 H^{c_2}(x)\right) \bigcup \left(\bigcup_{x \in {X}^{c_2}}
 H^{c_1}(x)\right). 
$$
If $x \in {X}^{c_1} \cap {X}^{c_2}$, then 
$0 \notin H^{c_i}(x)$  for $i=1,2$. Moreover, since $s_{c_1} \leq s_{c_2}$, the set $X^{c_2}$ has $s_{c_2}-s_{c_1}$ more elements than $X^{c_1}$.

Set $M:=s_{c_2}-s_{c_1}$.
Let 
$x_1>x_2>\dots>x_N$ be the elements of $X^{c_1}\setminus {X}^{c_2} $ and
$z_1>z_2>\dots>z_{M+N}$ be the elements of  $X^{c_2}\setminus{X}^{c_1}$. 
Lemma \ref{when0} implies the following:
\begin{itemize}
\item For all $i=1,\ldots,M$, we have $0 \notin H^{c_1}(z_i)$. \smallbreak
\item For all $i=M+1,\ldots,M+N$, we have \smallbreak
        \begin{itemize}
        \item $0 \notin H^{c_2}(x_{i-M})$ and $0 \in H^{c_1}(z_i)$, if $x_{i-M} < z_i$;\smallbreak
        \item $0 \in H^{c_2}(x_{i-M})$ and $0 \notin H^{c_1}(z_i)$, if $x_{i-M} >z_i$.
         \end{itemize}
\end{itemize}
Therefore, $\mathcal{N}_0(X^{c_1},X^{c_2})=N=\sharp \{  x\in X^{c_1} \ |\ x\notin X^{c_2}\}$.

The result of the proposition follows from \eqref{general}
and the fact that
$$\sum_{0 \leq c_1 < c_2 \leq l-1} \mathcal{N}_0(X^{c_1},X^{c_2})= 
  \sum_{0\leq c\leq l-1} \sum_{x \in X^c}   \mathcal{N}_0 (x).$$
\end{proof}

\begin{Prop}\label{nu}
The number of charged hook lengths in $H(\ulambda)$ congruent  to $0$ modulo $e$ is 
$$\sum_{0\leq c\leq l-1} \sum_{x \in X^c}  \sum_{k\geq 0} \mathcal{N}_k (x).$$
\end{Prop}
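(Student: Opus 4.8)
The plan is to unwind both sides of the claimed equality into counting problems on the $l$-abacus, dispose of the charged hook lengths equal to $0$ by quoting Proposition \ref{nuinfty}, and reduce everything else to a short lattice-path identity — one for each ordered pair of components. Throughout, for a component $c$ I write $E^c$ for the set of empty positions of its abacus and set $n_c(p):=\sharp\{z\in E^c\mid z<p\}$, so that $\delta(x)=n_{c_1}(x)$ for $x\in X^{c_1}$ and $n_c(y^c_d)=d-1$. The first step is the obvious ``shifted'' strengthening of Lemma \ref{when0}: for $x\in X^{c_1}$ and $v\in\Z$,
$$v\in H^{c_2}(x)\iff x-v\in E^{c_2}\ \text{ and }\ n_{c_2}(x-v)<n_{c_1}(x),$$
since $v=x-y^{c_2}_d$ forces $d=n_{c_2}(x-v)+1$, and $d\le\delta(x)$ is exactly the membership condition. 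Combining this with \eqref{general} and summing over $v=ke$, $k\in\Z$, one gets
$$\sharp\{h\in H(\ulambda)\mid h\equiv 0\bmod e\}=\sum_{k\in\Z}\ \sum_{0\le c_1,c_2\le l-1}\sharp\{x\in X^{c_1}\mid x-ke\in E^{c_2},\ n_{c_2}(x-ke)<n_{c_1}(x)\}.$$
The $k=0$ term counts the charged hook lengths equal to $0$, hence equals $\sum_{c}\sum_{x\in X^c}\mathcal{N}_0(x)$ by Proposition \ref{nuinfty}. So the remaining task is to show that for every fixed $k\ge 1$,
$$\sharp\{h\in H(\ulambda)\mid h=ke\}+\sharp\{h\in H(\ulambda)\mid h=-ke\}=\sum_{c}\sum_{x\in X^c}\mathcal{N}_k(x)=\sum_{c,t}\sharp\{x\in X^c\mid x-ke\in E^t\}.$$

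Fix $k\ge 1$. The first term on the left is exactly $\sum_{c,t}\sharp\{x\in X^c\mid x-ke\in E^t,\ n_t(x-ke)<n_c(x)\}$, so subtracting it from the right-hand side leaves $\sum_{c,t}\sharp\{x\in X^c\mid x-ke\in E^t,\ n_t(x-ke)\ge n_c(x)\}$; the substitution $y=x-ke$ followed by interchanging the names of $c$ and $t$ rewrites this as $\sum_{c,t}\sharp\{x\in E^c\mid x+ke\in X^t,\ n_c(x)\ge n_t(x+ke)\}$. The second term on the left is $\sum_{c,t}\sharp\{x\in X^c\mid x+ke\in E^t,\ n_t(x+ke)<n_c(x)\}$. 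Hence it suffices to prove, for each ordered pair $(c,t)$ (the case $c=t$ being the trivial $0=0$),
$$\sharp\{x\in X^c\mid x+ke\in E^t,\ n_t(x+ke)<n_c(x)\}=\sharp\{x\in E^c\mid x+ke\in X^t,\ n_c(x)\ge n_t(x+ke)\}.\qquad(\star)$$

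I expect $(\star)$ to be the only real difficulty, and I would prove it by interpreting both sides as step-counts of a single lattice path — a streamlined avatar of the interleaving argument used for Proposition \ref{nuinfty}. Writing $\widetilde n_c(p):=\sharp\{w\in X^c\mid w<p\}$, one has $n_c(p)=p+m-1-\widetilde n_c(p)$ for $p\ge 2-m$, so I would set $\phi(x):=\widetilde n_c(x)-\widetilde n_t(x+ke)+ke$ for $x\in\Z$. Then $\phi(x+1)-\phi(x)=[x\in X^c]-[x+ke\in X^t]\in\{-1,0,1\}$, so $\phi$ describes a lattice path; it is eventually constant equal to $ke$ as $x\to-\infty$ and eventually constant equal to $ke+s_c-s_t$ as $x\to+\infty$, and both values are $\ge 0$ (the first since $k\ge 1$, the second since ${\bf s}\in\mathcal{A}_e^l$ forces $|s_c-s_t|\le e$, whence $ke+s_c-s_t\ge(k-1)e\ge 0$). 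Since $x\in X^c$ forces $x+ke\ge 3-m$, an up-step of $\phi$ at $x$ occurs exactly when $x\in X^c$ and $x+ke\in E^t$; and the rank condition $n_t(x+ke)<n_c(x)$ translates into $\phi(x)\le -1$ — so the left side of $(\star)$ counts the up-steps of $\phi$ that start at a level $\le -1$. Dually, a down-step of $\phi$ at $x$ occurs when $x+ke\in X^t$ and $x\notin X^c$; those with $x\le -m$ start from a level $\ge 1$ (there $\widetilde n_c(x)=0$ and $\widetilde n_t(x+ke)\le ke-1$) and are irrelevant, while the rest have $x\in E^c$, and $n_c(x)\ge n_t(x+ke)$ translates into $\phi(x)\le 0$ — so the right side of $(\star)$ counts the down-steps of $\phi$ that start at a level $\le 0$, i.e.\ land at a level $\le -1$. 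Finally, because $\phi$ starts and ends at levels $\ge 0$, for every integer $j\le 0$ it crosses the half-integer level $j-\tfrac12$ upward exactly as often as downward, i.e.\ $\sharp\{\text{up-steps }(j-1)\!\to\! j\}=\sharp\{\text{down-steps }j\!\to\!(j-1)\}$; summing over $j\le 0$ identifies the up-steps from levels $\le -1$ with the down-steps into levels $\le -1$, which is precisely $(\star)$. The delicate points, as flagged, are the bookkeeping that attributes the two summands $h=ke$ and $h=-ke$ to up- and down-steps of one path, and the check that the ``bottom-of-the-abacus'' steps of $\phi$ do not contribute — this last point being exactly where the hypothesis ${\bf s}\in\mathcal{A}_e^l$ is genuinely used.
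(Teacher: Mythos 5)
Your argument is correct — I checked the reformulation of Lemma \ref{when0} (membership of a general value $v$ in $H^{c_2}(x)$), the bookkeeping that splits the count into the $k=0$ term plus, for each $k\ge 1$, the identity $\sharp\{h=ke\}+\sharp\{h=-ke\}=\sum_{c,t}\sharp\{x\in X^c\mid x-ke\in E^t\}$, the reduction to the per-pair identity $(\star)$ (including the fact that the diagonal case $c=t$ contributes $0=0$, since $x-ke\in E^c$ forces $n_c(x-ke)<n_c(x)$), and the lattice-path crossing argument itself, where $n_t(x+ke)-n_c(x)=\phi(x)$, the limits of $\phi$ at $\pm\infty$ are $ke\ge 0$ and $ke+s_c-s_t\ge(k-1)e\ge 0$ thanks to ${\bf s}\in\mathcal{A}_e^l$, and the down-steps at positions $x\le -m$ start at level $\ge 1$ and hence do not interfere. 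However, your route differs from the paper's. The paper keeps the pairing of Proposition \ref{nuinfty} intact: for each pair of components it forms the shifted-and-padded tuple $X^{c_2}[ke]$ (add $ke$ to every $\beta$-number and append the positions $-m+1,\ldots,-m+ke$) and observes that the number of charged hook lengths equal to $\pm ke$ coming from the pair $(X^{c_1},X^{c_2})$ is $\mathcal{N}_0(X^{c_1},X^{c_2}[ke])+\mathcal{N}_0(X^{c_2},X^{c_1}[ke])$, i.e.\ it reduces the $k>0$ case to two applications of the $k=0$ interleaving count, with ${\bf s}\in\mathcal{A}_e^l$ used to ensure the charge inequalities $s_{c_1}\le s_{c_2}+ke$ and $s_{c_2}\le s_{c_1}+ke$ needed to rerun that argument; the identification $\mathcal{N}_0(X^{c_1},X^{c_2}[ke])=\sharp\{x\in X^{c_1}\mid x-ke\notin X^{c_2}\cup\Z_{\le -m}\}$ then gives the statement. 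You instead invoke Proposition \ref{nuinfty} only for the $k=0$ term and prove, for each ordered pair $(c,t)$ separately, a new ``reflection'' identity $(\star)$ equating two bead/gap counts via up- and down-crossings of the path $\phi(x)=\widetilde n_c(x)-\widetilde n_t(x+ke)+ke$. The paper's shift trick is shorter because it recycles the existing interleaving lemma; your crossing argument is more self-contained and makes transparent exactly where the hypothesis ${\bf s}\in\mathcal{A}_e^l$ is used (the endpoint levels of the path), and it localises the identity to each ordered pair rather than to unordered pairs. Either way the proposition follows; no gap.
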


\begin{proof}
Let $k \in \Z_{>0}$. For all $c \in \{0,\ldots,l-1\}$, we have
$X^c=(\beta^c_1,\beta^c_2,\ldots,\beta^c_{m+s_c})$, and we will write
$X^c[ke]$ for the tuple $(\beta^c_1+ke,\beta^c_2+ke,\ldots,\beta^c_{m+s_c}+ke,
-m+ke,-m+ke-1,-m+ke-2,\ldots,-m+1)$.

Let
$(c_1,c_2)\in \{0,\ldots,l-1\}^2$ such that $c_1<c_2$.
Note that, since ${\bf s} \in \mathcal{A}_e^l$, we have
$$s_{c_1} \leq s_{c_2}+ke \quad\text{and}\quad  s_{c_2} \leq s_{c_1} +ke.$$
Using again here the notation of the proof of Proposition \ref{nuinfty}, we obtain that the number of elements equal to $\pm ke$ inside $H(X^{c_1},X^{c_2})$ is
$$\mathcal{N}_0(X^{c_1},X^{c_2}[ke]) +\mathcal{N}_0(X^{c_2},X^{c_1}[ke]).$$
 
In a similar way, we obtain that the number of elements equal to $\pm ke$ inside $\bigcup_{x \in {X}^{c}}
 H^{c}(x)$ is $\mathcal{N}_0(X^{c},X^{c}[ke])$ for all $c \in \{0,\ldots,l-1\}$. 
Following \eqref{general},  we deduce that the number of charged hook lengths in $H(\ulambda)$ equal to $\pm ke$ is
$$\sum_{c_1=0}^{l-1} \sum_{c_2=0}^{l-1} \mathcal{N}_0(X^{c_1},X^{c_{2}}[ke]).$$

We now observe that, for all $(c_1,c_2)\in \{0,\ldots,l-1\}^2$, we have
$$\mathcal{N}_0(X^{c_1},X^{c_{2}}[ke])=\sharp \{x \in X^{c_1}\ | \ x \notin  
X^{c_{2}}[ke]\} = \sharp \{x \in X^{c_1}\ | \ x-ke \notin  
X^{c_{2}} \cup \Z_{\leq -m}\}.$$
Therefore, the 
number of charged hook lengths in $H(\ulambda)$ equal to $\pm ke$ is $$\sum_{0\leq c\leq l-1} \sum_{x \in X^c}   \mathcal{N}_k (x).$$ 
We conclude that the 
number of charged hook lengths in $H(\ulambda)$ congruent to $0$ modulo $e$ is
$$\sum_{0\leq c\leq l-1} \sum_{x \in X^c}  \sum_{k\geq 0} \mathcal{N}_k (x).$$
 \end{proof}

\begin{exa}
Let us look at the Example \ref{firstabacus}, whose $H(\ulambda)$ has been calculated in Example \ref{firstabacusprime}.
We have   
$X[\ulambda,3]=((2,-1,-2),(2,0,-1,-2),(3,2,0,-1,-2))$.
Since $X^0 \subset X^1 \subset X^2$, we have $\mathcal{N}_0(x)=0$ for all $x \in X^c$ with $c \in \{0,1,2\}$, and so $0 \notin H(\ulambda)$.

Take $e=2$. For $k>1$, we have $\mathcal{N}_k(x)=0$ for all $x \in X^c$ with $c \in \{0,1,2\}$.
For $k=1$, we have $\mathcal{N}_k(x)=0$ for all $x \in X^c$ with $c \in \{0,1,2\}$ except for $x \in \{\beta_1^0,\beta_1^1, \beta_1^2,\beta_2^2\}$. We have 
$\mathcal{N}_1(\beta_1^0)=1$, $\mathcal{N}_1(\beta_1^1)=1$, $\mathcal{N}_1(\beta_1^2)=3$ and $\mathcal{N}_1(\beta_2^2)=1$.
We deduce that the total number of even charged hook lengths in $H(\ulambda)$ is equal to $6$.

Take $e=3$. For $k>1$, we have $\mathcal{N}_k(x)=0$ for all $x \in X^c$ with $c \in \{0,1,2\}$.
For $k=1$, we have $\mathcal{N}_k(x)=0$ for all $x \in X^c$ with $c \in \{0,1,2\}$ except for $x = \beta_1^2$. We have 
$\mathcal{N}_1(\beta_1^2)=1$, and so the number of charged hook lengths in $H(\ulambda)$  congruent to $0$ modulo $3$ is equal to $1$.

For $e>3$, we have no charged hook lengths congruent to $0$ modulo $e$.
\end{exa}

\begin{exa}
Let us look at the Example \ref{second abacus}. We have   
$X[\ulambda,3]=((3,0,-2),(4,2,1,-1,-2))$. 
By Proposition \ref{nuinfty}, the number of charged hook lengths equal to $0$ is
$$\sum_{c=0}^1 \sum_{x \in X^c}   \mathcal{N}_0 (x) = \sharp \{x \in X^0\,|\,x \notin X^1\}=2.$$

Take $e=2$. Let $k \in \Z>0$ and let $x \in X^0 \cup X^1$. We have $\mathcal{N}_k(x)=0$ except in the following cases where we have $\mathcal{N}_k(x)=1$
\begin{itemize}
\item $k=1$ and $x \in \{\beta_1^0,\beta_1^1,\beta_2^1,\beta_3^1\}$;\smallbreak
\item $k=2$ and $x \in \{\beta_1^0,\beta_1^1\}$.
\end{itemize}
By Proposition \ref{nu}, the number of even charged hook lengths in $H(\ulambda)$ is equal to $8$.
\end{exa}

\subsection{Defect and weight  for Ariki-Koike algebras}\label{sec-def-wei}
Let $\ulambda\in \Pi^l$, ${\bf s}=(s_0,\ldots,s_{l-1})\in \mathbb{Z}^l$ and $e \in  \mathbb{Z}_{>1}$. 
From now on, we will refer to the number of elements of $H(\ulambda)$ congruent to $0$ modulo $e$ as the $e$-{\em defect} of $\ulambda$ attached to ${\bf s}$.
For $l=1$, this number is known  as the  $e$-{\em weight} of a partition.

In \cite{F}, Fayers introduced a notion of weight for a charged multipartition. 
We have now at least three 
 different ways to compute the weight $p_{(e,{\bf s})} (\ulambda)$ of $\ulambda$ attached to $(e,{\bf s})$. In this subsection, we will show that 
$p_{(e,{\bf s})} (\ulambda)$  is equal to the $e$-defect of $\ulambda$.
One consequence of this equality will be the proof of our main result in the case of cyclotomic Ariki--Koike algebras.

 Here are the three equivalent definitions of weight:
 \begin{itemize}
 \item { \bf The original definition \cite{F}:}
  $$p_{(e,{\bf s})} (\ulambda)=\sum_{0\leq i\leq l-1} c^{e,{\bf s}}_{s_i} (\ulambda) -\frac{1}{2} \sum_{i\in \mathbb{Z}/e\mathbb{Z}} (c^{e,{\bf s}}_i (\ulambda)-c^{e,{\bf s}}_{i-1} (\ulambda))^2$$
 where $c^{e,{\bf s}}_{i} (\ulambda)$ denotes the number of nodes in $\ulambda$ with content congruent to $i$ modulo $e$.  \smallbreak
 \item {\bf The quantum group definition \cite{F}:}  $$p_{(e,{\bf s})} (\ulambda) =  \displaystyle    \| \uemptyset \|^{(e,{\bf s})}  -\| \ulambda \|^{(e,{\bf s})} $$
 where $\|.\|$ is given in \cite[\S 3.1]{JL} (we will not need the precise definition here).
 \smallbreak
 \item {\bf The  abaci definition \cite{JL}:}
 $$p_{(e,{\bf s})} (\ulambda)=\omega_e ( \tau_{e,{\bf s}} (\ulambda))$$
 where $\omega_e$ is the usual $e$-weight of a partition
 and $\tau_{e,{\bf s}}:\Pi^l \to \Pi$ is {\em Uglov's map}, which is 
 easy to define  using  abaci (see \cite{JL}).
 \end{itemize}

The definition we will use in this paper is the last one. According to it,  the weight $p_{(e,{\bf s})} (\ulambda)$ can be defined recursively as follows:  Consider the 
 set of $\beta$-numbers $X:=X[\ulambda,m]=(X^{0},\ldots,X^{l-1})$
  of $\ulambda$ and set $w(X):=0$
 \begin{itemize}
 \item[Step 1:] 
 If 
 $X^{c-1} \subseteq X^{c}$ for all $c=1,\ldots,l-1$, 
 then we move to Step 3.
Otherwise, if there exists $x\in X^{c-1}$ such that $x \notin X^{c}$ for some $c \in \{1,\ldots,l-1\}$, then we define a set of $\beta$-numbers $Y=(Y^0,\ldots,Y^{l-1})$ such that
 $Y^{c-1} := X^{c-1} \setminus \{ x\}$, $Y^{c}:=X^{c}\sqcup \{ x\}$ and
 $Y^j:=X^j$ for all $j\neq c-1,c$. We set $w(Y):=w(X)+1$ and we move to Step 2. \smallbreak
 
 \item[Step 2:] We set $X:=Y$ and $w(X):=w(Y)$, and we go back to Step 1.
 \smallbreak

  \item[Step 3:]  If $\{x-e\,|\, x\in X^{l-1} \cap \Z_{>e-m}\} \subseteq X^0$, then we move to Step 5. Otherwise, if there exists $x \in X^{l-1}$ such that   $x-e \notin X^0 \cup \Z_{\leq -m}$,
 then we define a set of $\beta$-numbers $Y=(Y^0,\ldots,Y^{l-1})$ such that
 $Y^{l-1} := X^{l-1} \setminus \{ x\}$, $Y^{0}:=X^{0}\sqcup \{ x-e\}$ and
 $Y^j:=X^j$ for all $j\neq 0,l-1$. We set $w(Y):=w(X)+1$ and we move to Step 4. \smallbreak
 
 \item[Step 4:]  We set $X:=Y$ and $w(X):=w(Y)$, and we go back to Step $1$. \smallbreak
 
 \item[Step 5:] We set $p_{(e,{\bf s})} (\ulambda):=w(X)$.

 \end{itemize} 

It is now straightforward to see that the weight  is given by the same formula as  in Proposition \ref{nu}, and so we have the following:

\begin{Th}\label{weight-defect}
Let $\ulambda\in \Pi^l$, ${\bf s} \in \mathbb{Z}^l$ and $e \in  \mathbb{Z}_{>1}$.  We have that the $e$-weight  $p_{(e,{\bf s})} (\ulambda)$ of $\ulambda$ is equal to the $e$-defect of $\ulambda$ attached to ${\bf s}$, that is, the number of charged hook lengths in $H(\ulambda)$ congruent to $0$ modulo $e$.
\end{Th}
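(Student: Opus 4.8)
The plan is to match the recursive procedure computing $p_{(e,{\bf s})}(\ulambda)$ (Steps 1--5 of Uglov's map, as recalled from \cite{JL}) term-by-term against the closed formula $\sum_{0\le c\le l-1}\sum_{x\in X^c}\sum_{k\ge 0}\mathcal{N}_k(x)$ of Proposition \ref{nu}. Concretely, I would argue that each elementary move in the procedure contributes exactly $1$ to $w(X)$, and simultaneously accounts for exactly one unit in the double-counting encoded by the numbers $\mathcal{N}_k(x)$, so that when the procedure terminates the accumulated $w(X)$ equals the sum in Proposition \ref{nu}. Since the latter sum is, by definition of $e$-defect and by Proposition \ref{nu}, the number of charged hook lengths in $H(\ulambda)$ congruent to $0$ modulo $e$, the theorem follows.

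First I would set up the bookkeeping. Fix $m$ large enough and work with $X=X[\ulambda,m]=(X^0,\ldots,X^{l-1})$, recalling $\min(X^c)=1-m$ for all $c$. I would introduce the quantity
$$
W(X):=\sum_{0\le c\le l-1}\sum_{x\in X^c}\sum_{k\ge 0}\mathcal{N}_k(x),
$$
where I read $\mathcal{N}_0(x)=\sharp\{t>c \mid x\notin X^t\}$ for $x\in X^c$ and, for $k>0$, $\mathcal{N}_k(x)=\sharp\{t\mid x-ke\notin X^t\cup\Z_{\le -m}\}$. The key structural observation is that $\mathcal{N}_0(x)$ counts, for $x\in X^c$, the ``inversions'' of $x$ relative to components $c+1,\ldots,l-1$, exactly the quantity that Step 1 is designed to eliminate; and $\mathcal{N}_k(x)$ for $k\ge 1$ counts analogous $e$-periodic inversions between the last component and the first (after cyclic shift by $e$), exactly what Step 3 eliminates. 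So I would prove two lemmas:

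(a) \emph{Step 1 invariance.} If $Y$ is obtained from $X$ by the Step-1 move (remove $x$ from $X^{c-1}$, add $x$ to $X^c$, where $x\in X^{c-1}\setminus X^c$), then $W(X)=W(Y)+1$. To see this, note the $k\ge 1$ contributions are unchanged because the move permutes an element within the fixed multiset $X^0\cup\cdots\cup X^{l-1}$ in a way that does not alter which shifted values $x-ke$ avoid $\bigcup_t X^t$ — more carefully, I would verify that moving $x$ from component $c-1$ to $c$ preserves, for every value $v$, the number of components whose $\beta$-set avoids $v$ (it is a reindexing, not an insertion/deletion of values), hence preserves each $\mathcal{N}_k$ for $k\ge 1$; and for the $\mathcal{N}_0$-part one checks that the total $\sum_{x\in X^{c_1}}[x\notin X^{c_2}]$ over pairs $c_1<c_2$ decreases by exactly $1$, using that $x\in X^{c-1}$, $x\notin X^c$, and the telescoping already carried out in the proof of Proposition \ref{nuinfty}.

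(b) \emph{Step 3 invariance.} If $Y$ is obtained from $X$ by the Step-3 move (remove $x$ from $X^{l-1}$, add $x-e$ to $X^0$, where $x\in X^{l-1}$ and $x-e\notin X^0\cup\Z_{\le -m}$), then $W(X)=W(Y)+1$. Here one again argues that the multiset of $\beta$-values, shifted appropriately, changes so that exactly one of the ``$e$-periodic inversion'' units indexed by the $\mathcal{N}_k$'s ($k\ge 1$) is removed, while the $\mathcal{N}_0$-part is unaffected because $X^0\subseteq X^1\subseteq\cdots\subseteq X^{l-1}$ is maintained as an invariant of the loop once we have exited Step 1. This is the place where the hypothesis ${\bf s}\in\mathcal{A}_e^l$, i.e. $s_{l-1}\le s_0+e$, is used — exactly as in Proposition \ref{nu}, where it guarantees $s_{c_1}\le s_{c_2}+ke$ and $s_{c_2}\le s_{c_1}+ke$ — to ensure the Step-3 move always strictly decreases $W$ and the process terminates.

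Finally I would combine the two: the procedure first drives $X$ to a configuration with $X^0\subseteq\cdots\subseteq X^{l-1}$ (making all $\mathcal{N}_0$-contributions vanish), alternating with Step-3 moves that kill the $\mathcal{N}_k$-contributions for $k\ge 1$ one at a time; since $W(X)\ge 0$ and strictly drops by $1$ at each move, it terminates, and at termination $W(X)=0$ (all $\beta$-sets are ``stable'' under both moves, which is precisely the stopping condition in Step 5 and forces every $\mathcal{N}_k(x)=0$). Counting the moves gives $p_{(e,{\bf s})}(\ulambda)=w(X_{\mathrm{final}})=W(X_{\mathrm{initial}})=\sum_{0\le c\le l-1}\sum_{x\in X^c}\sum_{k\ge0}\mathcal{N}_k(x)$, which by Proposition \ref{nu} is the number of charged hook lengths in $H(\ulambda)$ congruent to $0$ modulo $e$, as claimed. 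I expect the main obstacle to be the careful verification in lemma (b) that a Step-3 move alters the $\mathcal{N}_k$-contributions by exactly one unit and leaves the nested-inclusion configuration intact — the interplay between the periodic shift by $e$, the truncation at $\Z_{\le -m}$, and the inequality $s_{l-1}\le s_0+e$ needs to be tracked precisely, essentially re-deriving the identity $\mathcal{N}_0(X^{c_1},X^{c_2}[ke])=\sharp\{x\in X^{c_1}\mid x-ke\notin X^{c_2}\cup\Z_{\le-m}\}$ from the proof of Proposition \ref{nu} in the dynamic setting of the recursion.
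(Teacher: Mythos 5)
Your overall strategy --- introducing the monovariant $W(X):=\sum_{0\le c\le l-1}\sum_{x\in X^c}\sum_{k\ge 0}\mathcal{N}_k(x)$, showing each move of the recursion drops it by exactly $1$, and observing that the terminal configuration has $W=0$ --- is the right way to substantiate the paper's one-line claim that the recursion computes the formula of Proposition \ref{nu}, and your Lemma (a) is correct as argued (a Step-1 move leaves the $k\ge 1$ contributions untouched and removes exactly one $\mathcal{N}_0$-inversion). The genuine problem is the mechanism you give for Lemma (b). It is \emph{not} true that the nesting $X^0\subseteq\cdots\subseteq X^{l-1}$ is preserved by a Step-3 move, nor that such a move leaves the $\mathcal{N}_0$-part unchanged and removes exactly one unit from the $k\ge 1$ part: the move deletes $x$ from $X^{l-1}$ while $x$ may still lie in $X^c$ for $c<l-1$, and it inserts $x-e$ into $X^0$ although $x-e$ need not lie in $X^1,\ldots,X^{l-1}$; both effects create new $\mathcal{N}_0$-inversions, which is precisely why the algorithm returns to Step 1 after Step 3. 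For instance, with $l=2$, $e=2$, ${\bf s}=(0,0)$, $\ulambda=((2),(2))$, $m=2$, so $X^0=X^1=\{2,-1\}$, the Step-3 move on $x=2$ gives $Y^0=\{2,0,-1\}$, $Y^1=\{-1\}$: the $\mathcal{N}_0$-part jumps from $0$ to $2$ while the $k\ge 1$ part drops from $4$ to $1$; only the total drops by $1$. So the verification you defer to Lemma (b), carried out along the lines you describe, would fail.

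The conclusion of (b) is nevertheless true, and the gap is reparable by computing the \emph{total} change instead of the two parts separately. Since Step 3 is only executed when the components are nested, the $\mathcal{N}_0$-part is $0$ before the move; after the move it equals $\sharp\{c<l-1 : x\in X^c\}+\sharp\{t\ge 1 : x-e\notin X^t\}$, while the $k\ge 1$ part decreases by $\sharp\{c : x\in X^c\}+\sharp\{t\ge 1 : x-e\notin X^t\}$ --- the key observation being that, for every value $v$ of the configuration, the quantity $\sum_{k\ge 1}\mathcal{N}_k(v)$ changes by $-1$ if $v=x$ and by $0$ otherwise when $X^{l-1}$ loses $x$ and $X^0$ gains $x-e$, and that replacing the summand $x$ (in component $l-1$) by $x-e$ (in component $0$) removes exactly the term $\sharp\{t : x-e\notin X^t\cup\Z_{\le -m}\}$. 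The net change is $-1$, as required. Two further small points should be recorded: at termination, nestedness together with the Step-3 stopping condition forces $\mathcal{N}_k(x)=0$ for all $x$ and all $k$ by a short induction on $k$ (you assert this without the induction), and since the theorem is stated for arbitrary ${\bf s}\in\Z^l$ whereas Proposition \ref{nu} assumes ${\bf s}\in\mathcal{A}_e^l$, one should note that both the weight and the count of charged hook lengths divisible by $e$ are unchanged when the multicharge is permuted or its entries shifted modulo $e$, so that the reduction to $\mathcal{A}_e^l$ is harmless.
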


Now, by 
  \cite{F} (see also \cite{JL} for another proof), any two $l$-partitions that are in the same block have the same $e$-weight, and thus the same $e$-defect. We have hence  proved \eqref{equivmain}, which in turn implies the validity of Theorem \ref{main-AK}.

\subsection{$e$-cores and Schur elements} 
  Let $e \in  \mathbb{Z}_{>1}$. Another fundamental notion associated to the blocks  of Ariki-Koike algebras is the notion of $e$-core. Since Theorem \ref{weight-defect} allows us to define the notion of $e$-weight  using the Schur elements, it is natural to ask whether the same thing can be done with the $e$-core.

 The $e${\em-core} of a partition $\lambda \in \Pi$ is the partition obtained after removing all the $e$-hooks, that is, all hooks of length $e$. 
  Two simple modules $V^{\lambda}$ and $V^{\mu}$ (with $\lambda$ and $\mu$ of the same rank) are in the same 
   block if and only if they have the same $e$-core.

\begin{Prop}\label{BGO}
   If $\lambda \in \Pi$and $\mu$ is the $e$-core of $\lambda$, then
   the Schur element $s_\mu(q)$ of $V^{\mu}$ divides the Schur element $s_\lambda(q)$ of $V^{\lambda}$ in $\Z[q,q^{-1}]$. 
   \end{Prop}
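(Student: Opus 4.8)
The plan is to work with the cancellation-free formula \eqref{claim} for Schur elements of Ariki--Koike algebras in the single-partition case $l=1$, where the formula simplifies considerably. For $l=1$ and a partition $\lambda\in\Pi^1$, Theorem \ref{canfreeform} gives
$$s_\lambda(q)=q^{-N(\lambda)}\prod_{(i,j)\in[\lambda]}[h_{i,j}^{\lambda,\lambda}]_q,$$
a product over the nodes of $\lambda$ of $q$-integers $[h]_q=(q^h-1)/(q-1)$ indexed by the classical hook lengths of $\lambda$. So, up to the monomial factor $q^{-N(\lambda)}$ (which is a unit in $\Z[q,q^{-1}]$), the Schur element of $V^\lambda$ is determined, as an element of $\Z[q,q^{-1}]$, by the \emph{multiset of hook lengths} of $\lambda$. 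First I would record this reduction: it suffices to show that the multiset of hook lengths of the $e$-core $\mu$ is contained, as a multiset, in the multiset of hook lengths of $\lambda$. Indeed, if $\mathrm{Hooks}(\mu)\subseteq\mathrm{Hooks}(\lambda)$ as multisets, then $\prod_{h\in\mathrm{Hooks}(\mu)}[h]_q$ divides $\prod_{h\in\mathrm{Hooks}(\lambda)}[h]_q$ in $\Z[q,q^{-1}]$, since each $[h]_q$ is a genuine polynomial with constant term $1$ and the product over the larger multiset is the product over the smaller one times an honest product of such polynomials; absorbing the unit monomial factors finishes the divisibility in $\Z[q,q^{-1}]$.

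The heart of the argument is therefore the purely combinatorial claim: \emph{if $\mu$ is the $e$-core of $\lambda$, then every hook length of $\mu$ occurs as a hook length of $\lambda$, with at least the same multiplicity}. The natural tool is the abacus / $\beta$-number description developed in the earlier subsections. Choose $m$ large and represent $\lambda$ by its set of $\beta$-numbers $X=X[\lambda,m]$ on a single (ordinary) abacus. Passing to the $e$-core corresponds to sliding all beads up as far as they will go along the $e$ runners of the abacus (equivalently, repeatedly removing $e$-hooks), producing the $\beta$-set $X'=X[\mu,m']$ for the core. The key point is the classical fact, which I would either invoke or reprove in a line from the abacus picture, that the hook lengths of a partition with $\beta$-set $\{\beta_1>\beta_2>\cdots\}$ are exactly the positive differences $\beta_i-y$ where $y$ ranges over the gap positions (empty positions) to the left of $\beta_i$ — this is precisely the mechanism described around Remark \ref{tobeusedlater} and in the charged-hook-length subsection, specialised to $l=1$, $s=0$. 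Removing a single $e$-hook amounts to moving one bead from position $\beta$ to the empty position $\beta-e$; I would check that this operation only \emph{removes} certain differences of the form $\beta_i - y$ (those involving the moved bead and the gaps it jumps over) and leaves all other bead–gap differences intact, so the hook-length multiset can only lose elements, never gain spurious ones with higher multiplicity than in $\lambda$. Iterating over all $e$-hook removals gives $\mathrm{Hooks}(\mu)\subseteq\mathrm{Hooks}(\lambda)$ as multisets.

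I expect the main obstacle to be the bookkeeping in that last step: when a bead at position $\beta$ moves to $\beta-e$, the set of gaps to its left changes, and one must argue carefully that the multiset of bead–gap differences across the \emph{whole} abacus does not acquire any new element more times than it is lost. The cleanest way to handle this is probably to fix a target hook length $h>0$ and compare, before and after a single $e$-hook removal, the number of pairs $(\beta_i,y)$ with $\beta_i$ a bead, $y<\beta_i$ a gap, and $\beta_i-y=h$; a short case analysis (is the moved bead the upper endpoint? is a newly-created/destroyed gap at distance $h$ from some other bead?) shows this count is non-increasing. An alternative, and perhaps slicker, route is to avoid hook-by-hook removal altogether and instead compare the two full abaci $X$ and $X'$ directly runner-by-runner: on each of the $e$ runners the beads of $X'$ form an initial segment of the beads of $X$ on that runner, and from this one can read off a multiset inclusion of all bead–gap differences in one shot. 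Either way, no deep input is needed beyond the abacus formalism already set up in the paper; the content is entirely in making the monotonicity of the hook-length multiset under $e$-hook removal precise.
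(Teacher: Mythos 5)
Your first paragraph is fine and coincides with the paper's reduction: for $l=1$, Formula \eqref{claim} exhibits $s_\lambda(q)$ as a unit monomial times $\prod_{(i,j)\in[\lambda]}[h^{\lambda,\lambda}_{i,j}]_q$, so the proposition follows once the multiset of hook lengths of the $e$-core $\mu$ is contained in that of $\lambda$. But that containment is exactly the non-trivial content of the statement; the paper does not reprove it, it invokes \cite[Theorem 4.4]{BGO}. Your attempt to establish it yourself is where the argument breaks: the key step, that removing a single $e$-hook ``only removes certain differences and leaves all other bead--gap differences intact, so the hook-length multiset can only lose elements'', is false. Take $\lambda=(3,3)$ and $e=3$: removing the rim $3$-hook consisting of the cells $(1,3),(2,3),(2,2)$ yields $(2,1)$, whose hook multiset is $\{3,1,1\}$, while the hook multiset of $(3,3)$ is $\{4,3,3,2,2,1\}$ -- the multiplicity of $1$ has gone up. On the abacus (two beads, $\beta$-set $\{4,3\}$, gaps $\{0,1,2\}$) this is the move of the bead at $4$ down to the empty position $1$: the old pair $(3,2)$ of difference $1$ survives, and a new pair $(1,0)$ of difference $1$ is created at the landing site. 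So the per-step monotonicity you rely on, and the ``short case analysis'' you propose to verify it, cannot work; in general the hook multiset is not monotone under single $e$-hook removals, even though it is eventually dominated by that of $\lambda$ when one goes all the way to the core.

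Your fallback -- comparing the abaci of $\lambda$ and of its core ``runner-by-runner in one shot'' -- is not a proof either. Passing to the core pushes the beads on each runner down to the lowest positions of that runner (they are not a subset of the original bead positions), and the hook lengths are bead--gap differences taken \emph{across} runners, so these differences change in both directions under the pushing; nothing in the abacus formalism set up in the paper yields the multiset inclusion formally. That inclusion is precisely the theorem of Bessenrodt--Gramain--Olsson, whose proof is a genuine combinatorial argument, not a one-line consequence of the runner picture. To repair your proof, either cite \cite[Theorem 4.4]{BGO} at this point (which is what the paper does, reducing the proposition to Formula \eqref{claim} plus that theorem) or supply an actual proof of the containment; as written, the combinatorial heart of the statement is asserted by a mechanism that a small example already refutes.
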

\begin{proof}
The proof is based on a result by Bessenrodt, Gramain and Olsson. 
  By \cite[Theorem 4.4]{BGO},
  the multiset  $H (\mu)$ of hook lengths
   of $\mu$ is contained in the multiset $H(\lambda)$ of hook lengths of $\lambda$. The result is a straightforward consequence of the description of Schur elements given by Formula \eqref{claim}.  
  \end{proof}
  
  \begin{Rem}
  If $\lambda \in \Pi(n)$ and $q=1$, then we are in the group algebra case and
  $s_\lambda(1)=n!/\chi_{V^\lambda}(1)$. Using the famous hook length formula for $\chi_{V^\lambda}(1)$ yields that $s_\lambda(1)$ is equal to the product of the hook lengths of $\lambda$. Therefore, if $\mu$ is the $e$-core of $\lambda$, then
  $s_{\lambda}(1)/s_{\mu}(1)$ is equal to the product of the elements of 
  $H(\lambda) \setminus H (\mu)$. This is stated in \cite[Corollary 4.12]{BGO}, where it is also observed that the relative hook formula obtained in  \cite[Theorem 9.1]{MN} is a particular case of this result.
  \end{Rem}

   When $l\in \mathbb{Z}_{>1}$, there is a similar (but a little bit more complicated) notion of $e$-core for $\ulambda \in \Pi^l$, which has been given in \cite{JL}
   and yields the same properties as far as blocks are concerned. 
In fact, if ${\bf s} \in \mathbb{Z}^l$, we say that $\ulambda \in \Pi^l$ is an $(e,{\bf s})${\em-core} if $p_{(e,{\bf s})} (\ulambda)=0$. 
Following Theorem \ref{weight-defect}, the $(e,{\bf s})$-cores are exactly the $l$-partitions with no charged hook lengths divisible by $e$  (this is a well-known result for $l=1$). 
Therefore, Theorem \ref{weight-defect}  allows us to determine whether an $l$-partition is an $(e,{\bf s})$-core by looking at its Schur element:

\begin{Prop}
Let $\ulambda \in \Pi^l(n)$  and ${\bf s}=(s_0,\ldots,s_{l-1}) \in \mathbb{Z}^l$.
We have that  $\ulambda $ is an $(e,{\bf s})$-core if and only if $\Phi_e (y)$ does not  divide $s_{\ulambda} ({\bf y})$,
 the Schur element associated with the simple module $V^{\ulambda}$  of the Ariki--Koike algebra $\mathcal{H}_n^{\bf y}$ where
 ${\bf y}=(y^{s_{0}},\ldots ,y^{s_{l-1}}\;;\, y)$. 
\end{Prop}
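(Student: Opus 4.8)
The plan is to write the specialised Schur element $s_\ulambda({\bf y})$ via Formula \eqref{claim}, evaluate it at a primitive $e$-th root of unity, and compare the vanishing of its factors with the multiset $H(\ulambda)$, invoking Theorem \ref{weight-defect} to convert the outcome into a statement about weight.

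First, by definition $\ulambda$ is an $(e,{\bf s})$-core exactly when $p_{(e,{\bf s})}(\ulambda)=0$, and by Theorem \ref{weight-defect} this weight equals the number of elements of $H(\ulambda)$ congruent to $0$ modulo $e$. Hence $\ulambda$ is an $(e,{\bf s})$-core if and only if \emph{no} charged hook length in $H(\ulambda)$ is divisible by $e$, and it suffices to prove that this last property fails precisely when $\Phi_e(y)$ divides $s_\ulambda({\bf y})$ in $\Z[y,y^{-1}]$. Put $\zeta:=\exp(2\pi i/e)$; since $e>1$ we have $\zeta\neq 1$, and $\Phi_e$ is the monic, primitive minimal polynomial of $\zeta$ over $\Q$. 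As $s_\ulambda({\bf y})\in\Z[y,y^{-1}]$, Gauss's lemma gives that $\Phi_e(y)$ divides $s_\ulambda({\bf y})$ in $\Z[y,y^{-1}]$ if and only if $s_\ulambda(\zeta)=0$ (the two conditions both hold trivially when $s_\ulambda({\bf y})$ is the zero Laurent polynomial).

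Next, substitute $Q_a\mapsto y^{s_a}$ and $q\mapsto y$ into Formula \eqref{claim}. Each exponent $h_{i,j}^{\lambda^a,\lambda^b}$ is shifted to $h_{i,j}^{\lambda^a,\lambda^b}+s_a-s_b=ch_{i,j}^{\lambda^a,\lambda^b}$, so
$$s_\ulambda({\bf y})=(-1)^{n(l-1)}\,y^{-N(\bar{\ulambda})}\prod_{0\leq a\leq l-1}\prod_{(i,j)\in[\lambda^a]}\Bigl([h_{i,j}^{\lambda^a,\lambda^a}]_y\prod_{0\leq b\leq l-1,\ b\neq a}\bigl(y^{ch_{i,j}^{\lambda^a,\lambda^b}}-1\bigr)\Bigr),$$
where $ch_{i,j}^{\lambda^a,\lambda^a}=h_{i,j}^{\lambda^a,\lambda^a}$ is the classical hook length. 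Evaluating at $y=\zeta$, the scalar $(-1)^{n(l-1)}\zeta^{-N(\bar{\ulambda})}$ is nonzero, so $s_\ulambda(\zeta)=0$ if and only if one of the displayed factors vanishes at $\zeta$. Since $\zeta\neq1$, we have $[h]_\zeta=(\zeta^h-1)/(\zeta-1)=0$ if and only if $e\mid h$, and $\zeta^{ch}-1=0$ if and only if $e\mid ch$ (this stays valid when $ch=0$). Running over all $a,b\in\{0,\dots,l-1\}$ and $(i,j)\in[\lambda^a]$, and noting that the factors with $a=b$ contribute exactly the diagonal elements $ch_{i,j}^{\lambda^a,\lambda^a}$ of $H(\ulambda)$, we conclude that $s_\ulambda(\zeta)=0$ if and only if some charged hook length in $H(\ulambda)$ is divisible by $e$. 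Chaining the equivalences yields the proposition.

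I do not expect a genuine obstacle: this is a bookkeeping argument. The two points that need care are (i) the possibility that $s_\ulambda({\bf y})$ is identically zero — which occurs precisely when some off-diagonal charged hook length equals $0$ — handled uniformly by replacing divisibility by $\Phi_e$ with the condition $s_\ulambda(\zeta)=0$; and (ii) matching the quantum-integer factors $[h_{i,j}^{\lambda^a,\lambda^a}]_y$ of \eqref{claim} with the diagonal ($a=b$) charged hook lengths, so that the count of vanishing factors agrees with the defect. All of the substantive combinatorics is already contained in Theorem \ref{weight-defect}, hence ultimately in Proposition \ref{nu}.
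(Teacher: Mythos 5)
Your proof is correct and follows essentially the same route as the paper: Theorem \ref{weight-defect} reduces core-ness to the absence of charged hook lengths divisible by $e$, which is then matched against the factorisation \eqref{claim} of $s_{\ulambda}({\bf y})$. The only (harmless) difference is that you re-derive the link between $\Phi_e$-divisibility and the multiset $H(\ulambda)$ by evaluating at a primitive $e$-th root of unity, which also absorbs the degenerate case $s_{\ulambda}({\bf y})=0$, whereas the paper quotes the earlier identification of the $e$-defect with this count and checks non-vanishing of the Schur element in each direction.
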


\begin{proof}
By Theorem \ref{weight-defect},  $p_{(e,{\bf s})} (\ulambda)$ is equal to the $e$-defect of $\ulambda$ attached to ${\bf s}$.
The latter is equal to the $e$-defect of $V^{\ulambda}$ (that is, the maximal $N \in \N$ such that $\Phi_e(y)^N$ divides $s_{\ulambda} ({\bf y})$) as long as
$s_{\ulambda} ({\bf y}) \neq 0$.

If $\ulambda $ is an $(e,{\bf s})$-core, then $0 \notin H(\ulambda)$ and so 
 $s_{\ulambda} ({\bf y}) \neq 0$. Thus, the $e$-defect of $V^{\ulambda}$ is equal to $0$, whence $\Phi_e (y)$ does not  divide $s_{\ulambda} ({\bf y})$.
 
  Conversely, if $\Phi_e (y)$ does not  divide $s_{\ulambda} ({\bf y})$, then
 $s_{\ulambda} ({\bf y}) \neq 0$. Thus, the $e$-defect of ${\ulambda}$ attached to ${\bf s}$ is  equal to $0$, whence $p_{(e,{\bf s})} (\ulambda)=0$.
 \end{proof}


%

\section{Defect in cyclotomic Hecke algebras of $G(l,p,n)$}

We can now deduce our main result for type $G(l,p,n)$ from  $G(l,1,n)$ with the use of Clifford theory. For more details about Clifford theory in general the reader may refer to \cite[\S2.3]{C}, and in this particular setting to \cite{GJa}, \cite{CJ} or \cite{Ma1}.

\subsection{Generic Hecke algebras of $G(l,p,n)$}
Let $l,p,n$ be three positive integers such that $d:=l/p \in \Z$.  
By definition, $G(l,p,n)$  is the group of all 
  $n \times n$ monomial matrices whose non-zero entries are ${l}$-th roots of unity, while the product of all non-zero
  entries is a $d$-th root of unity. Therefore, for $n=1$, we have $G(l,p,1) \cong G(d,1,1) \cong \Z/d\Z$, and this case has been covered in the previous section.  Moreover,  for $n=2$, the case where $p$ is even cannot be treated with the use of Clifford theory, and it is an unfortunate but  usual exception to this kind of results. This is why,
 from now on, we assume that
\begin{itemize}
\item  either $n>2$,\smallbreak
\item or $n=2$  and  $p$ is odd.
\end{itemize}

Let ${\bf x}:=(X_0,\,\ldots,\,X_{d-1}\,;\,x)$ be a set of $d+1$ indeterminates  and
set $R:=\mathbb{Z}[{\bf x},{\bf x}^{-1}]$.
The {\it generic Hecke algebra} $\mathcal{H}_{l,p,n}$ of $G(l,p,n)$ is the $R$-algebra with generators $t_0,\,t_1,\,\ldots,\,t_{n}$ and relations:
\begin{itemize}
\item $
(t_0-X_0)(t_0-X_1)\cdots (t_0-X_{d-1})=0$, \smallbreak
\item $(t_j-x)(t_j+1)=0$ for  $j=1,...,n$, \smallbreak
\item $t_1t_3t_1=t_3t_1t_3$, $t_jt_{j+1}t_j=t_{j+1}t_jt_{j+1}$ for $j=2,\ldots,n-1$, \smallbreak
\item $t_1t_2t_3t_1t_2t_3=t_3t_1t_2t_3t_1t_2$, \smallbreak
\item $t_1t_j=t_jt_1$ for $j=4,\ldots,n$, \smallbreak
\item $t_i t_j=t_j t_i$  for $2 \leq i <j \leq n$ with $j-i>1$, \smallbreak
\item $t_0t_j=t_jt_0$ for $j=3,\ldots,n$, \smallbreak
\item $t_0t_1t_2=t_1t_2t_0$, \smallbreak
\item $\underbrace{t_2t_0t_1t_2t_1t_2t_1\ldots}_{p+1 \textrm{ factors}}=
\underbrace{t_0t_1t_2t_1t_2t_1t_2\ldots}_{p+1 \textrm{ factors}}$\,. \smallbreak
\end{itemize}

\subsection{Cyclotomic Hecke algebras of $G(l,p,n)$ and Clifford theory}
Let  $\eta_{l}:={\rm exp}(2\pi i/l)$, $\eta_d:=\eta_l^p$ and $K:=\mathbb{Q}(\eta_l)$. 
Let $y$ be an indeterminate, let $m \in \Z$ and
and let $\varphi_m: \mathbb{Z}_K[{\bf x},{\bf x}^{-1}] \rightarrow \mathbb{Z}_K[y,y^{-1}]$   be a cyclotomic specialisation of $\mathcal{H}_{l,p,n}$ such that
\begin{eqnarray*}\label{cy2}
&&\varphi_m(X_j)=\eta_{d}^{j}y^{mr_{j}} \textrm{ for}\ j=0,1,\ldots ,d-1, \\
&&\varphi_m(x)=y^{mr} 
\end{eqnarray*}
where $(r_{0},\ldots ,r_{d-1},r)\in \mathbb{Z}^{d+1}$. The cyclotomic Hecke algebra $K(y) (\mathcal{H}_{l,p,n})_{\varphi_m}$ is split semisimple. Note that the representation theory of $(\mathcal{H}_{l,p,n})_{\varphi_m}$, and especially its block theory,  does not depend much on $m$ (see also \cite[\S 5.5.1]{C}).

The algebra  $(\mathcal{H}_{l,p,n})_{\varphi_p}$ can be viewed as a subalgebra  of the cyclotomic Ariki--Koike algebra
$(\mathcal{H}_{n}^{{\bf q}})_\varphi$ of type $G(l,1,n)$ where $\varphi: \mathbb{Z}_K[{\bf q},{\bf q}^{-1}] \rightarrow \mathbb{Z}_K[y,y^{-1}]$ is the 
cyclotomic specialisation given by
\begin{eqnarray*}\label{cy3}
&&\varphi(Q_i)=\eta_{l}^{i}y^{r_{i}} \textrm{ for}\ i=0,1,\ldots ,l-1, \\
&&\varphi(q)=y^{pr} 
\end{eqnarray*}
with $r_{kd+j}:=r_j$ for all $j=0,\ldots,d-1$ and $k=1,\ldots,p-1$.
In particular,  if we denote by $G$ the cyclic group of order $p$, $(\mathcal{H}_{n}^{{\bf q}})_\varphi$ is a
``twisted symmetric algebra'' of $G$ over $(\mathcal{H}_{l,p,n})_{\varphi_p}$  (see again \cite[\S 5.5.1]{C}). This implies an action  of $G$ on $\mathrm{Irr}(K(y)(\mathcal{H}_{n}^{{\bf q}})_\varphi)$ which corresponds to the action generated by the cyclic permutation by $d$-packages on the $l$-partitions of $n$:
$$\begin{array}{rl}
\sigma: &\,\,\,\ulambda=(\lambda^{0},\ldots,\lambda^{d-1},\lambda^{d},\ldots,\lambda^{2d-1},\ldots,\lambda^{pd-d},\ldots,\lambda^{pd-1})\\ &\\
\mapsto &{\sigma}(\ulambda)=(\lambda^{pd-d},\ldots,\lambda^{pd-1},\lambda^{0},\ldots,\lambda^{d-1},\ldots, \lambda^{pd-2d},\ldots,\lambda^{pd-d-1}).\smallbreak
\end{array}$$

From now on, to make the notation lighter, we will write $\mathcal{H}$ for $K(y)(\mathcal{H}_{n}^{{\bf q}})_\varphi$ and $\bar{\mathcal{H}}$ for 
$K(y) (\mathcal{H}_{l,p,n})_{\varphi_p}$. 
Let $M \in \mathrm{Irr}(\bar{\mathcal{H}})$. By Clifford theory, there exists $V^{\ulambda} \in
 \mathrm{Irr}({\mathcal{H}})$ such that $M$ is a composition factor of $\mathrm{Res}_{\bar{\mathcal{H}}}^{\mathcal{H}}(V^{\ulambda})$.  Moreover, there is an action of $G$ on $\mathrm{Irr}(\bar{\mathcal{H}})$ such that, if we denote by $\bar{\Omega}_{\ulambda}$ the orbit of $M$ under the action of $G$, we have
 $$[\mathrm{Res}_{\bar{\mathcal{H}}}^{\mathcal{H}}(V^{\ulambda})]=\sum_{E \in \bar{\Omega}_{\ulambda}}[E]$$
 in the Grothendieck group of the category of finite dimensional $\bar{\mathcal{H}}$-modules.
 
 Let $V \in  \mathrm{Irr}({\mathcal{H}})$.
 The elements of $\bar{\Omega}_{\ulambda}$ appear as composition factors in
 $\mathrm{Res}_{\bar{\mathcal{H}}}^{\mathcal{H}}(V)$ if and only if $V=V^{g(\ulambda)}$ for some
 $g \in G$.
 In particular, we have
 \begin{equation}\label{howow}
 [\mathrm{Res}_{\bar{\mathcal{H}}}^{\mathcal{H}}( V^{\sigma(\ulambda)})]=[\mathrm{Res}_{\bar{\mathcal{H}}}^{\mathcal{H}}(V^{\ulambda})].
 \end{equation}
We deduce that
$$\mathrm{Irr}(\bar{\mathcal{H}})=\{ E\,|\,E \in  \bar{\Omega}_{\ulambda}, \, \ulambda \in \Pi^l(n)\}.$$

\subsection{Blocks and defect} 
Let $\theta: \mathbb{Z}_{K}[y,y^{-1}] \rightarrow K(\eta), y \mapsto \eta$ be a specialisation of $\mathbb{Z}_{K}[y,y^{-1}]$ such that  $\eta$ is a non-zero complex number and $\theta(\eta_l)$ is a primitive $l$-th root of unity. 
The $\theta$-blocks of $\bar{\mathcal{H}}$ have been determined by Wada  \cite{W}, and they can be described as follows:
Let $E,F \in \mathrm{Irr}(\bar{\mathcal{H}})$ with $E \in  \bar{\Omega}_{\ulambda}$ and $F \in  \bar{\Omega}_{\umu}$ for some $\ulambda,\umu \in \Pi^l(n)$.
\begin{itemize}
\item  If $\{V^\ulambda\}$ is a $\theta$-block, then $\{E\}$ is a $\theta$-block. \smallbreak
\item Otherwise, $E$ and $F$   are in the same $\theta$-block if and only if 
 $V^\ulambda$ and $V^{g(\umu)}$ are in the same $\theta$-block for some $g\in G$.
\end{itemize}

On the other hand, thanks to Clifford theory, the Schur elements of $\bar{\mathcal{H}}$ can be easily obtained from the Schur elements of $\mathcal{H}$ as follows (see
 \cite[Proposition 2.3.15]{C}):
  \begin{equation}\label{schurmultiple}
s_E(y)=\frac{| \bar{\Omega}_{\ulambda} |}{p} \varphi(s_\ulambda({\bf q})).
\end{equation}
The above equation, in combination with \eqref{howow}, implies that
 \begin{equation}\label{howcanthisow}
\varphi(s_\ulambda({\bf q})) = \varphi(s_{\sigma(\ulambda)}({\bf q})).
\end{equation}
In fact, the last equality can be obtained independently of Clifford theory by  \cite[Proposition 2.5]{CJG}

We can now prove Conjecture \ref{our conj} for the groups $G(l,p,n)$:

\begin{Th}\label{main-AK2}
Let $\Phi$ denote the minimal polynomial of $\eta$ over the field $K$
and let  $E,F \in \mathrm{Irr}(\bar{\mathcal{H}})$. If $E$ and $F$ are in the same $\theta$-block, then they have the same $\Phi$-defect, that is,
$\nu_\Phi(s_E(y))=\nu_\Phi(s_F(y))$.
\end{Th}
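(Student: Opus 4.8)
The plan is to transfer Theorem~\ref{main-AK} from the cyclotomic Ariki--Koike algebra $\mathcal{H}$ to its index-$p$ subalgebra $\bar{\mathcal{H}}$, using Wada's description of the $\theta$-blocks of $\bar{\mathcal{H}}$ together with the Clifford-theoretic formula \eqref{schurmultiple} for the Schur elements of $\bar{\mathcal{H}}$.

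First I would record that $\nu_\Phi$ is a valuation on $K(y)$ for which $\nu_\Phi(c)=0$ for every $c\in K^\times$: since $\theta$ is a $\mathbb{Z}_K$-algebra morphism it does not annihilate any nonzero element of $\mathbb{Z}_K$, so the localisation $\mathcal{O}$ of $\mathbb{Z}_K[y,y^{-1}]$ at $\ker\theta$ contains $K$, whence every nonzero constant is a unit of $\mathcal{O}$. Combining this with \eqref{schurmultiple}, we obtain, for any $\ulambda\in\Pi^l(n)$ with $E\in\bar{\Omega}_{\ulambda}$,
$$\nu_\Phi(s_E(y))=\nu_\Phi\!\left(\frac{|\bar{\Omega}_{\ulambda}|}{p}\right)+\nu_\Phi\big(\varphi(s_{\ulambda}({\bf q}))\big)=\nu_\Phi\big(\varphi(s_{\ulambda}({\bf q}))\big),$$
since $|\bar{\Omega}_{\ulambda}|/p\in\mathbb{Q}^\times$. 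This is independent of the chosen lift $\ulambda$: if also $E\in\bar{\Omega}_{\umu}$, then, as recalled above, $\umu=g(\ulambda)$ for some $g\in G$, and since $G$ acts on $\Pi^l(n)$ through powers of $\sigma$, iterating \eqref{howcanthisow} gives $\varphi(s_{\umu}({\bf q}))=\varphi(s_{\ulambda}({\bf q}))$.

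Next, fix $E,F\in\mathrm{Irr}(\bar{\mathcal{H}})$ in the same $\theta$-block and pick $\ulambda,\umu\in\Pi^l(n)$ with $E\in\bar{\Omega}_{\ulambda}$ and $F\in\bar{\Omega}_{\umu}$. By Wada's description there are two cases. If $\{V^{\ulambda}\}$ is a $\theta$-block of $\mathcal{H}$, then $\{E\}$ is a $\theta$-block of $\bar{\mathcal{H}}$, hence $F=E$ and there is nothing to prove. Otherwise, $E$ and $F$ being in the same $\theta$-block means that $V^{\ulambda}$ and $V^{g(\umu)}$ lie in the same $\theta$-block of $\mathcal{H}$ for some $g\in G$. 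Theorem~\ref{main-AK} then gives $\nu_\Phi\big(\varphi(s_{\ulambda}({\bf q}))\big)=\nu_\Phi\big(\varphi(s_{g(\umu)}({\bf q}))\big)$, and since $G$ acts through powers of $\sigma$, \eqref{howcanthisow} yields $\varphi(s_{g(\umu)}({\bf q}))=\varphi(s_{\umu}({\bf q}))$. Hence
$$\nu_\Phi(s_E(y))=\nu_\Phi\big(\varphi(s_{\ulambda}({\bf q}))\big)=\nu_\Phi\big(\varphi(s_{\umu}({\bf q}))\big)=\nu_\Phi(s_F(y)),$$
which is the desired equality.

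Once Theorem~\ref{main-AK} is in hand, the argument is essentially bookkeeping; the only points needing care are that the scalar $|\bar{\Omega}_{\ulambda}|/p$ is a $\Phi$-unit (so that replacing $E$ by a lift $V^{\ulambda}$ preserves the $\Phi$-defect) and that the choice of lift within its $\sigma$-orbit is immaterial, which is exactly \eqref{howcanthisow}. I do not expect a genuine analytic or combinatorial obstacle at this stage: everything substantial has been absorbed into Theorem~\ref{main-AK} (hence ultimately into Fayers's theorem on $e$-weights), into Wada's classification of the $\theta$-blocks of $\bar{\mathcal{H}}$, and into the Clifford-theoretic identity \eqref{schurmultiple}.
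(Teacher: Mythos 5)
Your proposal is correct and follows essentially the same route as the paper: invoke Wada's block description to reduce to $V^{\ulambda}$ and $V^{g(\umu)}$ lying in a common $\theta$-block of $\mathcal{H}$, apply Theorem~\ref{main-AK}, and transfer back via \eqref{schurmultiple} and \eqref{howcanthisow}. The only difference is that you spell out the (implicit in the paper) checks that the scalar $|\bar{\Omega}_{\ulambda}|/p$ has $\Phi$-valuation zero and that the choice of lift within its $\sigma$-orbit does not matter.
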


\begin{proof} 
If $E$ and $F$ are in the same $\theta$-block, then, as we mentioned just above, there exists $g \in G$ such that 
 $V^\ulambda$ and $V^{g(\umu)}$ are in the same $\theta$-block.
 By Theorem \ref{main-AK}, we have that $\nu_\Phi(\varphi(s_{\ulambda}({\bf q})))=\nu_\Phi(\varphi(s_{g(\umu)}({\bf q})))$. The result follows from
 \eqref{schurmultiple} and \eqref{howcanthisow}.
   \end{proof}

\section{Defect in cyclotomic Hecke algebras of exceptional groups}

As far as the exceptional complex reflection groups are concerned, there are no general results about decomposition matrices and blocks as in the case of $G(l,p,n)$.
For the exceptional real reflection groups, in the equal parameter case, the blocks are given in \cite[Appendix F]{GP}. Even though the conceptual proof by Geck \cite{G} of Conjecture \ref{our conj} for Weyl groups precedes the explicit description of blocks for the exceptional real reflection groups, one can also verify the conjecture for these groups by inspecting the corresponding blocks. The only other work on blocks of cyclotomic Hecke algebras associated with exceptional complex reflection groups is 
\cite{CM}. In that paper, the cyclotomic Hecke algebras considered are the ones
arising from generalised Harish-Chandra theory, for which the term ``cyclotomic Hecke algebras'' was originally conceived. The choices of parameters for these cyclotomic Hecke algebras are given by Brou\'e and Malle in \cite{BM, Ma2} and 
for the exceptional complex reflection groups of rank $2$ are as follows:

\begin{center}
$ $\\
\begin{tabular}{|c|c|c|}
  \hline
  $W$ & $K$ &  $\textrm{Parameters of } \mathcal{H}_\varphi(W)$ \\ 
  \hline
  $G_{4}$ & $\Q(\eta_3)$ &$1,y,y^2$ \\
  \hline
   $G_{5}$& $\Q(\eta_3)$  &  $1,y,y^2\,;\,1,y,y^2$ \\
   & &  $1,y,y^2\,;\,1,y^2,y^4$ \\
   &&  $1,y,y^2\,;\,1,y^4,y^8$ \\
  \hline
    $G_{8}$ &   $\Q(i)$ & $1,\eta_8^3y,\eta_8^5y,y^2$ \\
  &  &  $1,y,-y,y^2$ \\
   &&  $1,y,y^2,y^3$ \\
    &&   $1,y,-y,-y^4$ \\
    &&  $1,y^3,-y^3,-y^4$ \\
    &&   $1,-1,-y,y^5$ \\
    &&            $1,-y^4,y^5,-y^5$ \\
    &&              $1,-y,-y^4,y^5$ \\
\hline
    $G_{9}$ & $\Q(\eta_8)$  & $1,y^4\,;\,1,y^2,y^4,y^6$ \\
 \hline
    $G_{10}$ & $\Q(\eta_{12})$   & $1,-y^2,y^4\,;\,1,y^3,-y^3,y^6$ \\
  \hline
    $G_{12}$ & $\Q(i\sqrt{2})$    & $1,y^2$ \\
    \hline
    $G_{16}$ & $\Q(\eta_5)$ & $1,y,y^2,y^3,y^4$ \\
    \hline
    $G_{20}$ & $\Q(\eta_3,\sqrt{5})$  & $1,y,y^2$ \\
    \hline
    $G_{22}$  & $\Q(i,\sqrt{5})$ & $1,y^2$ \\         
\hline   
\end{tabular}
$ $\\$ $\\
TABLE 1. \emph{The ``Zyklotomische Heckealgebren'' of the rank $2$ exceptional groups}\\
\end{center}\medskip

In \cite{CM}, the authors calculated the decomposition matrices for all the algebras above and for all specialisations $\theta: y \mapsto \eta$ that yield non-semisimple algebras in characteristic $0$. They also showed that all modular irreducible representations can be lifted to ordinary ones, that is, the decomposition matrix is of the form:
$$D_\theta=\left(
\begin{array}{cccc}
1 & 0 & \dots &0\\
0& 1 & \dots &0\\
\vdots & \vdots & \ddots &\vdots\\
0 & 0 & \dots &1\\
* & * & \dots &*\\
\vdots & \vdots & \ddots &\vdots\\
* & * & \dots &*\\
\end{array}\right)$$
$ $\\
The subset  $\mathcal{B}_\theta$ of $\Irr(W)$ labelling the rows of the identity matrix at the top of $D_\theta$ is called an \emph{optimal basic set}. More formally, $\mathcal{B}_\theta$ is a subset of $\Irr(W)$ such that
\begin{enumerate}
\item there exists a bijection $\mathcal{B}_\theta \leftrightarrow \Irr(K(\eta)
\mathcal{H}_\varphi(W)),\,V \mapsto M_V$\,;\smallbreak
\item we have $[V:M_{V'}]=\delta_{V,V'}$ for all $V,V' \in \mathcal{B}_\theta$.
\end{enumerate}
To save space, the information given in \cite{CM} is the shape of each block of $D_\theta$, as well as the representations of the block that belong to $\mathcal{B}_\theta$. This is why, in order to verify Conjecture \ref{our conj} in all these cases, we created a computer function that first recovered all other representations inside the block, and then calculated their defect.

\subsection{The algorithm} 
The algorithm is very elementary and we chose to use GAP3 to implement it, because the package CHEVIE \cite{chevie1,chevie2} contains the irreducible representations and the Schur elements in factorised form for any Hecke algebra associated to a complex reflection group. However, any language with this information could do. The function, named ``DefBlock'', and its outputs can be found on the project's webpage \cite{Web}.


Let $\theta: \mathbb{Z}_{K}[y,y^{-1}] \rightarrow K(\eta), y \mapsto \eta$ be a $\Z_K$-algebra morphism such that the specialised algebra $K(\eta)\mathcal{H}_\varphi(W)$ is not semisimple. Then $\eta$ is a root of unity of order $e>0$ and there exists $r \in \{1,2,\ldots,e\}$ with ${\rm gcd}(e,r)=1$ such that $\eta=\eta_e^r$.  Let $\Phi$ denote the minimal polynomial of $\eta$ over $K$. Our function  takes as inputs the cyclotomic Hecke algebra $\mathcal{H}_\varphi(W)$ with parameter $y$, the integers $e$ and $r$, and the list $l$ of irreducible representations in a block $B$ that belong to the optimal basic set $\mathcal{B}_\theta$. We only need to consider blocks of non-zero defect. 

The first step is to determine all irreducible representations in the block $B$ (this is information that the first author should have stored somewhere, but did not). We calculate all possible linear combinations of the characters of the representations in the list $l$, with $y$ specialised to $\eta_e^r$. Since the dimensions of the irreducible representations of the exceptional complex reflection groups we consider are no higher than $6$, taking linear combinations of up to $6$ characters is enough. It turns out that this suffices to uniquely determine which irreducible representations belong to the block $B$.

The second step is straightforward, given that one has the Schur elements in factorised form. For each irreducible representation in the block $B$, we calculate  its $\Phi$-defect as the multiplicity of  $\eta_e^r$ as a root of its Schur element. We then determine the set of all $\Phi$-defect values inside $B$.

%

The outputs of the function are a list with the set of all $\Phi$-defect values and all representations inside the block $B$. Let us see an example:

\begin{exa}
We would like to calculate the defect of the block 
of $\mathcal{H}_\varphi(G_4)$ generated by $\chi_{1,0}$ and $\chi_{1,8}$ for $\eta=\eta_{12}$. We type
 \begin{verbatim}
gap> W:=ComplexReflectionGroup(4);;
gap> H:=Hecke(W,[[1,y,y^2]]);;
gap> DefBlock(H,12,1,[[1,0],[1,8]]);
[ [ 1 ], [ [ [ 1, 0 ] ], [ [ 1, 8 ] ], [ [ 2, 3 ] ] ] ]
\end{verbatim}
We know from \cite{CM} that the corresponding block of the decomposition matrix is of the form
$$\left(\begin{array}{cc}
1 & 0\\
0 & 1\\
1 & 1 
\end{array}\right)$$
where the first two rows are labelled by  $\chi_{1,0}$ and $\chi_{1,8}$. We now obtained that the last row is labelled by $\chi_{2,3}$ and that all representations in this block are of defect $1$.
\end{exa}

As we said in the beginning of this subsection, we ran this function for all blocks of non-zero defect of the cyclotomic Hecke algebras of Table 1, and stored the results on \cite{Web}.
If the cardinality of the set of all $\Phi$-defect values of a block is equal to $1$, then all irreducible representations in  the block have the same $\Phi$-defect. This, in combination with our results,  allowed us to deduce the following theorem:

\begin{Th} 
Let $W \in \{G_4,G_5,G_8,G_9,G_{10},G_{12},G_{16},G_{20},G_{22}\}$ and let $\mathcal{H}_\varphi(W)$ be one of the cyclotomic Hecke algebras of Table 1.
 If  $V,V' \in \Irr(W)$ belong to the same $\theta$-block, then they have the same $\Phi$-defect.
\end{Th}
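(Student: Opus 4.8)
The plan is to give a computational proof that reduces the statement to the explicit data obtained in \cite{CM} together with the known factorised Schur elements of the algebras in Table 1. First, recall that by Proposition \ref{Schur element cyclotomic} the $\Phi$-defect of $V \in \Irr(W)$ equals the multiplicity with which the $K$-cyclotomic polynomial $\Phi$ (the minimal polynomial of $\eta$ over $K$) occurs in the factorisation \eqref{phifactors} of the Schur element $s_V(y)$; this multiplicity can be read off the factorised Schur element, which CHEVIE provides through \texttt{FactorizedSchurElement}. Recall also that, by \cite{CM}, for each $W$ in our list, each cyclotomic Hecke algebra $\mathcal{H}_\varphi(W)$ of Table 1, and each specialisation $\theta\colon y\mapsto\eta$ making the algebra non-semisimple, the decomposition matrix $D_\theta$ has the unitriangular shape displayed above, with the identity part indexed by an optimal basic set $\mathcal{B}_\theta$. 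Since blocks of $\Phi$-defect $0$ are singletons (by the theorem on blocks of defect $0$; in particular the case $e=1$ is vacuous, the $1$-defect being always $0$), it suffices to treat the finitely many blocks of non-zero defect recorded in \cite{CM}.

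The second step is to reconstruct, from the partial information in \cite{CM} --- the shape of each block and which of its representations belong to $\mathcal{B}_\theta$ --- the complete list of representations in each block. Because $D_\theta$ is unitriangular with the identity block on top, for $V'\in\mathcal{B}_\theta$ the specialised character $\theta(\chi_{V'})$ is the Brauer character of the simple module $M_{V'}$, so for an arbitrary $V\in\Irr(W)$ the decomposition numbers $[V:M_{V'}]$ are the (necessarily non-negative integer) coefficients in the unique expansion $\theta(\chi_V)=\sum_{V'\in\mathcal{B}_\theta}[V:M_{V'}]\,\theta(\chi_{V'})$. Hence $V$ lies in the $\theta$-block of a given part of the basic set exactly when $\theta(\chi_V)$ is a non-negative integer combination of the corresponding $\theta(\chi_{V'})$. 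As the entries of $D_\theta$ are small in all cases at hand, enumerating combinations with coefficients in $\{0,1,\dots,6\}$ suffices to list every representation of every block, and one cross-checks against the block shapes given in \cite{CM} that nothing is missed.

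The third step is mechanical: for each representation in a reconstructed block one extracts $\nu_\Phi(s_V(y))$ from the factorised Schur element --- the relevant factor being the cyclotomic polynomial attached to $\eta=\eta_e^r$, encoded in CHEVIE by the term with valuation datum $r/e$ (respectively $0$ when $e=1$) --- and records the set of defect values occurring in the block. This is precisely what the function \texttt{DefBlock} above does; running it over all blocks of non-zero defect of all cyclotomic Hecke algebras of Table 1 and over all specialisations $\theta$ rendering them non-semisimple in characteristic $0$, one finds that this set is a singleton in every block, which is the claim, and the full output is stored in \cite{Web}. The only delicate points are bookkeeping ones, and they are where an error could creep in: making sure every non-semisimple $\theta$ is covered (a finite task, since the Schur element factors are $K$-cyclotomic, so only finitely many orders $e$ and primitive roots $\eta_e^r$ matter), and confirming that the coefficient bound $6$ is large enough to recover each block in full --- both verified directly against \cite{CM}. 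No conceptual obstacle remains once the blocks are reconstructed and the Schur elements factorised.
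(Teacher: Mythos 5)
Your proposal is correct and takes essentially the same approach as the paper: a computational verification in GAP3/CHEVIE that, for each block of non-zero defect, reconstructs the full set of representations from the optimal basic set data of \cite{CM} by matching specialised characters with non-negative integer combinations (coefficients bounded by $6$) of the specialised basic-set characters, and then reads the $\Phi$-defect of each representation off its factorised Schur element. This is precisely what the paper's function \texttt{DefBlock} implements, with the results stored in \cite{Web}.
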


\begin{Cor}
Let $W \in \{G_{12}, G_{22}\}$ and let $\mathcal{H}_\varphi(W)$ be any cyclotomic Hecke algebra associated with $W$. If  $V,V' \in \Irr(W)$ belong to the same $\theta$-block, then they have the same $\Phi$-defect.
\end{Cor}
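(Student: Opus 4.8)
The plan is to deduce the Corollary from the preceding Theorem, by showing that every cyclotomic Hecke algebra of $W\in\{G_{12},G_{22}\}$ reduces, as far as blocks and defects are concerned, to one of the algebras of Table~1. The feature of $G_{12}$ and $G_{22}$ that makes this work (and that distinguishes them from, say, $G_4$) is that each of them has a \emph{single} orbit $\mathcal{C}$ of reflecting hyperplanes and that all of its reflections have order $2$; thus $\mathcal{H}(W)$ is governed by exactly two parameters $u_{\mathcal{C},0},u_{\mathcal{C},1}$. By \cite{C} it is essential, and --- reading off the explicit Schur elements of $G_{12}$ and $G_{22}$ computed by Malle \cite{Ma2}, or arguing from the quasi-homogeneity of Schur elements together with \cite[Proposition 3.1.2]{C} --- every primitive monomial occurring in the factorisation \eqref{schurfactors} of a Schur element $s_V$ of $\mathcal{H}(W)$ is a power of the single monomial $E:=u_{\mathcal{C},0}u_{\mathcal{C},1}^{-1}$. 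So I may write $s_V=\xi_V\,M_V\prod_{i\in I_V}\Psi_{V,i}(E^{\,e_{V,i}})$, where $\Psi_{V,i}$ is a $K$-cyclotomic polynomial, $e_{V,i}\in\Z$, and all of this data depends only on $W$.

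Next I would analyse an arbitrary cyclotomic specialisation $\varphi$ of $\mathcal{H}(W)$. It sends $E$ to $y^{r}$ for some $r\in\Z$; if $r=0$ then $K(\eta)\mathcal{H}_\varphi(W)$ is split semisimple for every $\eta$ and the statement is vacuous, so assume $r\neq0$ and, replacing $E$ by $E^{-1}$ if necessary, $r>0$. Then
\[
 s_V^{\varphi}(y)=\xi_V\,y^{c_V}\prod_{i\in I_V}\Psi_{V,i}\bigl(y^{\,r e_{V,i}}\bigr),\qquad c_V\in\Z,
\]
where only $r$ depends on $\varphi$. Each factor $\Psi_{V,i}(y^{r e_{V,i}})$ has no repeated roots (its roots are the $r e_{V,i}$-th roots of the roots of unity at which $\Psi_{V,i}$ vanishes, hence pairwise distinct). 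Consequently, for a $\Z_K$-algebra specialisation $\theta\colon y\mapsto\eta$ with $\eta$ a root of unity and $\Phi$ the minimal polynomial of $\eta$ over $K$, the $\Phi$-defect of $V$ equals $\sharp\{\,i\in I_V\mid \Psi_{V,i}(\zeta^{e_{V,i}})=0\,\}$, where $\zeta:=\eta^{r}$; in particular it depends only on $W$, $V$ and the root of unity $\zeta$. Likewise, after a unit rescaling of the generator satisfying the quadratic relation, $K(\eta)\mathcal{H}_\varphi(W)$ becomes $\mathcal{H}(W)$ evaluated at $E=\zeta$, so its decomposition matrix --- hence the partition of $\Irr(W)$ into $\theta$-blocks --- also depends only on $\zeta$.

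Finally I would invoke Table~1. Let $r_0>0$ be the exponent playing the role of $r$ for the Table~1 cyclotomic Hecke algebra $\mathcal{H}_{\varphi_0}(W)$ of $W$ (here $r_0=2$ for both $G_{12}$ and $G_{22}$), and let $\eta_0$ be a root of unity with $\eta_0^{\,r_0}=\zeta$ --- any $r_0$-th root of $\zeta$ will do; set $\Phi_0$ to be the minimal polynomial of $\eta_0$ over $K$. By the previous paragraph, the algebra $K(\eta_0)\mathcal{H}_{\varphi_0}(W)$ specialised at $y\mapsto\eta_0$ has exactly the same $\theta_0$-block partition of $\Irr(W)$ as $K(\eta)\mathcal{H}_\varphi(W)$ has $\theta$-block partition, and for each $V$ the $\Phi_0$-defect of $V$ in $\mathcal{H}_{\varphi_0}(W)$ equals the $\Phi$-defect of $V$ in $\mathcal{H}_\varphi(W)$. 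Since \cite{CM} computed the decomposition matrices of the Table~1 algebras for all specialisations $y\mapsto\eta_0$, the preceding Theorem applies to $\mathcal{H}_{\varphi_0}(W)$ and tells us that any two representations in a common $\theta_0$-block share the same $\Phi_0$-defect; transporting this back along the above correspondence yields the Corollary.

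The representation-theoretic content lies entirely in the Theorem, and the main thing to pin down carefully is the structural input of the first paragraph --- that the Schur elements of $G_{12}$ and $G_{22}$ involve only the single essential monomial $E$ --- together with the routine verification that the specialised numerical algebra, its decomposition matrix and the defect counts depend only on the value $\zeta$ of $E$ and not on the chosen root of unity $\eta$ with $\eta^r=\zeta$. I expect the one genuine subtlety to be the behaviour of the field $K$ when $\Phi$ is a proper $K$-factor of a cyclotomic polynomial over $\Q$ (which does occur for $G_{12}$ and $G_{22}$); but this affects the Table~1 algebras in precisely the same way, so it is already absorbed into the computations of \cite{CM} on which the Theorem rests.
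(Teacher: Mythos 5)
Your proposal is correct and takes essentially the same route as the paper: the whole point is that $G_{12}$ and $G_{22}$ have a single conjugacy class of order-$2$ reflections, so the generic Hecke algebra is, up to normalisation, the one-parameter algebra of Table 1 with parameters $1,y^2$, and hence every cyclotomic specialisation is already covered by the computations of \cite{CM} and the GAP3 program underlying the preceding Theorem. The paper compresses this into two sentences, while your analysis via the single essential monomial $u_{\mathcal{C},0}u_{\mathcal{C},1}^{-1}$ (showing blocks and $\Phi$-defects depend only on its specialised value $\zeta$) is just the explicit verification implicit in that statement.
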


\begin{proof}
If $W \in \{G_{12}, G_{22}\}$, then $W$ is generated by elements of order $2$ (that is, $W$ is a $2$-reflection group) that belong to a single conjugacy class. Therefore, the generic Hecke algebra $\mathcal{H}(W)$, defined over a suitable base ring, is isomorphic to the algebra $\mathcal{H}_\varphi(W)$ of Table 1, and every possible specialisation has already been considered by \cite{CM} and our program.
\end{proof}

Before we finish this section, we should mention here that there is another work on decomposition matrices of exceptional complex reflection groups, Chavli's paper \cite{Ch20} on the generic Hecke algebras of $G_4$, $G_8$ and $G_{16}$, where she proves the existence of optimal basic sets for all possible specialisations of these algebras. The decomposition matrices of all cyclotomic Hecke algebras associated with these three groups could theoretically be studied through Chavli's paper, but there are too many options to be considered if we want to verify Conjecture \ref{our conj} case-by-case. We will return to this point when we discuss future perspectives in \S \ref{perspectives}.

\section{Defect in other types of essential algebras}

In \S \ref{sec-essential} we discussed about essential algebras, which are symmetric algebras whose Schur elements are Laurent polynomials of a specific form. Hecke algebras are particular cases of essential algebras.  One may ask whether Conjecture \ref{our conj} holds for other types of essential algebras. The answer is positive for Yokonuma--Hecke algebras (of type $A$), which were introduced by Yokonuma \cite{yo} as generalisations of Iwahori--Hecke algebras.

\subsection{The Yokonuma--Hecke algebra of type $A$ and the cyclotomic Yokonuma-Hecke algebra} The Yokonuma--Hecke algebra of type $A$ is a generalisation of the Iwahori--Hecke algebra of type $A$ and can be seen as a particular case of a cyclotomic Yokonuma--Hecke algebra, introduced in \cite{CPA2}. The term ``cyclotomic'' here does not refer to a cyclotomic specialisation, but to the fact that the cyclotomic Yokonuma--Hecke algebra generalises the Yokonuma--Hecke algebra of type $A$ in the same way that the Ariki--Koike algebra, which is also called ``cyclotomic Hecke algebra'' by several people, generalises the Hecke algebra of type $A$. In order not to repeat everything twice, we will work directly with cyclotomic Yokonuma--Hecke algebras, but,  for all results used, we will also give the references for the Yokonuma--Hecke algebra of type $A$, because it was studied first.

 Let  $n\in \mathbb{Z}_{\geq 0}$ and $d,l \in \mathbb{Z}_{>0}$. 
Let ${\bf q}:=(Q_0,\,\ldots,\,Q_{l-1}\,;\,q)$ be a set of $l+1$ indeterminates  and
set $R:=\mathbb{Q}[{\bf q},{\bf q}^{-1}]$. We define the \emph{cyclotomic
Yokonuma--Hecke algebra} ${\rm Y}(d,l,n)$ as the associative  $R$-algebra (with unit) with a presentation by generators:
$$g_1,g_2,\ldots,g_{n-1}, t_1,\ldots, t_n, X_1$$
and relations:
$$\begin{array}{rclcl}
g_ig_j & = & g_jg_i && \mbox{for all $i,j=1,\ldots,n-1$ such that $\vert i-j\vert > 1$,}\\[0.1em]
g_ig_{i+1}g_i & = & g_{i+1}g_ig_{i+1} && \mbox{for  all $i=1,\ldots,n-2$,}\\[0.1em]
t_it_j & =  & t_jt_i &&  \mbox{for all $i,j=1,\ldots,n$,}\\[0.1em]
g_it_j & = & t_{s_i(j)}g_i && \mbox{for all $i=1,\ldots,n-1$ and $j=1,\ldots,n$,}\\[0.1em]
t_j^d   & =  &  1 && \mbox{for all $j=1,\ldots,n$,}\\[0.2em]
g_i^2  & = & q +(q-1)  e_{i}  g_i && \mbox{for  all $i=1,\ldots,n-1$},\\[0.1em]
X_1g_1X_1g_1&=&g_1X_1g_1X_1&&\\[0.1em]
X_1g_i&=&g_iX_1 &&  \mbox{for all $i=2,\ldots,n-1$,}\\[0.1em]
X_1t_j&=&t_jX_1 &&  \mbox{for all $j=1,\ldots,n$,}\\[0.1em]
(X_1 -Q_0) \cdots(X_1 -Q_{l-1})&=&0&&
\end{array}\\
$$
where, for all $i=1,\ldots,n-1$, $s_i$ is the transposition $(i,i+1)$ and 
$$e_i=\frac{1}{d}\sum\limits_{0\leq s\leq d-1}t_i^s t_{i+1}^{-s}\ .$$
For $d=1$, the generators $t_j$ disappear and ${\rm Y}(1,l,n)$ is isomorphic to the  Ariki--Koike algebra $R\,\mathcal{H}^{\bf q}_{n}$. For $l=1$, the generator $X_1$ disappears and ${\rm Y}(d,1,n)$ is the Yokonuma--Hecke algebra of type $A$, which in turn becomes the group algebra of $G(d,1,n)$ for $q=1$. The presentation of ${\rm Y}(d,1,n)$ by generators and relations is due to \cite{Ju1, JuKa, Ju2}, with the simplified quadratic relation for the $g_i$ being due to \cite{CPA} (see also \cite[Remark 3.1]{CP} for the quadratic relation given above).

The representation theory of the Yokonuma--Hecke algebra ${\rm Y}(d,1,n)$ of type $A$ was first studied in \cite{thi, thi2, thi3}, but its irreducible representations were explicitly constructed in \cite{CPA} and adapted in \cite{CP} for the quadratic relation given above. The irreducible representations of ${\rm Y}(d,l,n)$ were constructed in \cite{CPA2} and can be adapted similarly. 
It turns out that the algebra ${\rm Y}(d,l,n)$ is split semisimple over the field $K({\bf q})$, where $K:=\Q(\eta_{dl})$ (the splitting field is $\Q(\eta_{d},{\bf q})$). Moreover, there exists a bijection $\Pi^{dl} (n) \leftrightarrow \Irr(K({\bf q}){\rm Y}(d,l,n)),\, \ulambda  \mapsto V^{\ulambda}$.

A  symmetrising trace $\boldsymbol{\tau}$ was  defined in \cite{CPA2} on $K({\bf q}){\rm Y}(d,l,n)$; this map satisfies $\boldsymbol{\tau}(b)=\delta_{1b}$ for all $b$ in a certain basis of ${\rm Y}(d,l,n)$. The map  $\boldsymbol{\tau}$ coincides with the canonical symmetrising trace on $\mathcal{H}^{\bf q}_{n}$ for $d=1$ and with the symmetrising trace defined on the Yokonuma--Hecke algebra of type $A$ in \cite{CPA} for $l=1$. By \cite[Proposition 7.4]{CPA2}, the Schur element of  $V^{\ulambda} \in \Irr(K({\bf q}){\rm Y}(d,l,n))$ is equal to
\begin{equation}\label{YokonumaSchur}
d^n s_{\ulambda[1]}({\bf q}) s_{\ulambda[2]}({\bf q}) \ldots s_{\ulambda[d]}({\bf q})
\end{equation}
where, for all $i=1,\ldots,d$, $\ulambda[i]=(\lambda^{(i-1)l}, \lambda^{(i-1)l+1}\ldots,\lambda^{il-1}) \in \Pi^l(n)$ and $s_{\ulambda[i]}({\bf q})$ is given by Formula \eqref{claim}.  Our notation for the $\ulambda[i]$ is in agreement with the notation used in \S \ref{sec-U} if we take $\mathcal{U}=\left\{0,1,\ldots ,{dl-1}\right\}$, $t=d$ and $\mathcal{U}_i=\{ (i-1)l+k\,|\,k=0,1,\ldots,l-1\}$ for all $i=1,\ldots,d$.

This connection between the Schur elements of ${\rm Y}(d,l,n)$ and those of Ariki--Koike algebras was subsequently explained by the following isomorphism of $R$-algebras proved independently in \cite{PdA} and in \cite{Salim}:  
\begin{equation}\label{isom thm}
 {\rm Y}(d,l,n) \cong \bigoplus_{\underset{n_1,\ldots ,n_d\geq 0}{n_1+\ldots +n_d=n}} {\rm Mat}_{\frac{n!}{n_1!\dots n_d!}} \left(\mathcal{H}^{\bf q}_{n_1}\otimes_R   \mathcal{H}^{\bf q}_{n_2}\otimes_R  \ldots \otimes_R   \mathcal{H}_{n_d}^{\bf q}\right).
\end{equation} 
This isomorphism in the case of the Yokonuma--Hecke algebra of type $A$, that is, the case $l=1$, had  first been  obtained in \cite{Lus}, subsequently in \cite{JPA}, and adapted in \cite{CP} for the quadratic relation given above and the ring $R$. Another proof for $l=1$ was later given in \cite{EsRy}.
The existence of the isomorphism \eqref{isom thm} implies that the map $\boldsymbol{\tau}$ above is indeed a symmetrising trace on ${\rm Y}(d,l,n)$ over  $R$.

Let $y$ be an indeterminate and 
and let $\varphi: K[{\bf q},{\bf q}^{-1}] \rightarrow K[y,y^{-1}]$   be a $K$-algebra morphism such that
\begin{eqnarray*}\label{cy}
&&\varphi(Q_i)=\eta_{l}^{i}y^{r_{i}} \textrm{ for}\ i=0,1,\ldots ,l-1, \\
&&\varphi(q)=y^{r} 
\end{eqnarray*}
where $(r_{0},\ldots ,r_{l-1},r)\in \mathbb{Z}^{l+1}$. 
Let also $\theta: K[y,y^{-1}] \rightarrow K(\eta), y \mapsto \eta$ be a specialisation of $K[y,y^{-1}]$ such that $\eta \in \C^*$ and $\theta(\eta_l)$ is a primitive $l$-th root of unity (for simplicity, we may assume that $\theta$ is a $K$-algebra morphism, whence $\theta(\eta_l)=\eta_l$). Let $\Phi$ denote the minimal polynomial of $\eta$ over the field $K$. It follows from Equations \eqref{YokonumaSchur} and \eqref{isom thm} that, for $\ulambda, \umu \in \Pi^{dl}(n)$,
\begin{itemize}
\item the $\Phi$-defect of $V^{\ulambda}$ is equal to $\sum_{i=1}^d \nu_\Phi(\varphi(s_{\ulambda[i]}({\bf q})))$, and \smallbreak
\item $V^{\ulambda}$ and $V^{\umu}$ are in the same $\theta$-block if and only  $V^{\ulambda[i]}$ and $V^{\umu[i]}$ are in the same $\theta$-block of $K(\eta)(\mathcal{H}^{\bf q}_{n_i})_\varphi$, where $n_i=|\ulambda[i]|=|\umu[i]|$, for all $i=1,\ldots,d$.
\end{itemize}
Given the above,  the following result is a direct consequence of Theorem \ref{main-AK}.

\begin{Th}\label{thm above}
Let $\ulambda,\umu \in \Pi^{dl}(n)$. If $V^{\ulambda}$ and $V^{\umu}$ are in the same $\theta$-block, then they have the same $\Phi$-defect.
\end{Th}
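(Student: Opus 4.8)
The plan is to reduce the statement to the Ariki--Koike case, where it is Theorem~\ref{main-AK}, using the isomorphism \eqref{isom thm}. Specialising \eqref{isom thm} through the cyclotomic specialisation $\varphi$ and then through $\theta\colon y\mapsto\eta$, the algebra $K(\eta)\,{\rm Y}(d,l,n)$ becomes a direct sum, over the $d$-tuples $(n_1,\ldots,n_d)$ of non-negative integers with $n_1+\cdots+n_d=n$, of matrix algebras over the tensor products $K(\eta)(\mathcal{H}^{\bf q}_{n_1})_\varphi\otimes_{K(\eta)}\cdots\otimes_{K(\eta)}(\mathcal{H}^{\bf q}_{n_d})_\varphi$. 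Since passing to a matrix algebra over an algebra $A$ changes neither $\Irr(A)$ nor its decomposition matrix, since over the split field $K(\eta)$ the blocks of a tensor product of split algebras are exactly the products of the blocks of the tensor factors, and since simple modules lying in distinct summands of a direct sum of algebras lie in distinct blocks, one obtains precisely the second bullet point stated above the theorem: $V^{\ulambda}$ and $V^{\umu}$ lie in the same $\theta$-block of $K(\eta)\,{\rm Y}(d,l,n)$ if and only if $|\ulambda[i]|=|\umu[i]|=:n_i$ for all $i$ and, for each $i$, the simple modules $V^{\ulambda[i]}$ and $V^{\umu[i]}$ of $K(\eta)(\mathcal{H}^{\bf q}_{n_i})_\varphi$ lie in the same $\theta$-block.

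For the defect, I would use \eqref{YokonumaSchur}: the Schur element of $V^{\ulambda}$ in ${\rm Y}(d,l,n)$ equals $d^n\prod_{i=1}^d s_{\ulambda[i]}({\bf q})$, and since $d$ is a non-zero integer and $K$ has characteristic zero we have $\nu_\Phi(d)=0$, so applying $\varphi$ and then $\nu_\Phi$ gives that the $\Phi$-defect of $V^{\ulambda}$ equals $\sum_{i=1}^d\nu_\Phi(\varphi(s_{\ulambda[i]}({\bf q})))$. One small point worth recording is that the specialisations $\varphi$ and $\theta$ here are of exactly the form treated in Section~\ref{sec-AK}, except that the coefficient field is now $K=\Q(\eta_{dl})$ rather than $\Q(\eta_l)$; this enlargement affects neither the $\theta$-block structure of the specialised Ariki--Koike algebras (we are throughout in the split case) nor the value of $\nu_\Phi(\varphi(s_{\ulambda[i]}({\bf q})))$, which by the analysis of Section~\ref{sec-AK} equals the number of charged hook lengths of $\ulambda[i]$ congruent to $0$ modulo the order of $\eta^r$. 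Hence Theorem~\ref{main-AK} applies verbatim to each $K(\eta)(\mathcal{H}^{\bf q}_{n_i})_\varphi$.

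With these facts in hand the theorem is immediate: if $V^{\ulambda}$ and $V^{\umu}$ are in the same $\theta$-block, then for every $i$ the modules $V^{\ulambda[i]}$ and $V^{\umu[i]}$ lie in the same $\theta$-block of the cyclotomic Ariki--Koike algebra $K(\eta)(\mathcal{H}^{\bf q}_{n_i})_\varphi$, so Theorem~\ref{main-AK} yields $\nu_\Phi(\varphi(s_{\ulambda[i]}({\bf q})))=\nu_\Phi(\varphi(s_{\umu[i]}({\bf q})))$ for each $i$; summing over $i$ gives equality of the two $\Phi$-defects. There is no genuine obstacle here — the only things requiring verification are the standard compatibilities of blocks, decomposition matrices and Schur elements with the matrix-algebra and tensor-product operations appearing in \eqref{isom thm}, all over the split field $K(\eta)$.
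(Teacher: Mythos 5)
Your proposal is correct and follows essentially the same route as the paper: the paper also deduces from \eqref{YokonumaSchur} and \eqref{isom thm} the two facts you establish (additivity of the $\Phi$-defect over the components $\ulambda[i]$, and the componentwise description of $\theta$-blocks) and then invokes Theorem \ref{main-AK} for each factor. Your extra remarks (that $\nu_\Phi(d^n)=0$ and that enlarging the coefficient field to $\Q(\eta_{dl})$ changes neither the blocks nor the defects) are just explicit verifications of points the paper leaves implicit.
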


Therefore, the analogue of Conjecture \ref{our conj} is  also true for the cyclotomic Yokonuma--Hecke algebra  ${\rm Y}(d,l,n)$, and in particular for the Yokonuma--Hecke algebra  ${\rm Y}(d,1,n)$ of type $A$.

\subsection{Other essential algebras}\label{perspectives}

We have stated Conjecture \ref{our conj} for all cyclotomic Hecke algebras associated with complex reflection groups and we have seen that it holds in all cases for which information on blocks and decomposition matrices is known, that is, for the groups of the infinite series $G(l,p,n)$ and for certain exceptional groups. In the previous subsection we saw that its analogue holds for the Yokonuma--Hecke algebra of type $A$ and for the cyclotomic Yokonuma--Hecke algebra, but one could argue that this is because of the connection, established by the isomorphism \eqref{isom thm}, with the Iwahori--Hecke algebra of type $A$ and the Ariki--Koike algebra respectively. However, one could also wonder whether a similar result holds for all one-parameter essential algebras (including the ones obtained as specialisations of multi-parameter ones as in the end of \S \ref{sec-essential}). One could go a step further and wonder whether a similar result holds for any essential algebra, with $\Phi$-defect being replaced by $ \Psi_{V,i}(M_{V,i})$-defect, using the notation of Definition  \ref{def-essential}. Unfortunately, we do not have any data or examples outside the ones treated in this paper to back up this claim. Proving the claim on a generic level would help a lot though in avoiding to do a huge case-by-case analysis in order to prove Conjecture \ref{our conj} for the exceptional complex reflection groups.

Another perspective to consider is the case of positive characteristic. Throughout this paper we only consider specialisations to subfields of $\C$. Moreover, one can use classical modular representation theory of finite groupes to see that not all simple modules inside a $p$-block (where $p$ is a prime number) have the same defect. However, if we consider a cyclotomic Ariki--Koike algebra as in    
Remark \ref{good Ariki-Koike} and a specialisation $\theta: \Z_K[y,y^{-1}] \rightarrow k, y \mapsto \eta$, where $k$ is any field and $\eta \in k$ is a root of unity of order $e>1$, then Theorem \ref{main-AK} is still valid with a proof similar to the one we provided if the characteristic of $k$ is large enough (to be precise, larger than $e+n-1$). It is also valid if $\eta=1$ and $k$ is a field of characteristic $e>0$. These statements can, through the Morita equivalence of Dipper and Mathas, generalise to any cyclotomic Ariki--Koike algebra. We decided not to include these results for the sake of uniformity, and in order to restrict ourselves to the proof of Conjecture \ref{our conj} in its current form.
However, we do not exclude the possibility that other Hecke algebras and, more generally, other essential algebras might share similar properties.


\begin{thebibliography}{99}    


\addcontentsline{toc}{chapter}{Bibliography}

\bibitem[Ar1]{Arsem}  \textsc{S.~Ariki}, On the semi-simplicity of the Hecke algebra of 
$(\mathbb{Z}/r\mathbb{Z}) \wr \mathfrak{S}_n$. J. Algebra 169 (1994), 216--225.

\bibitem[Ar2]{Ar} \textsc{S.~Ariki},  {Representation theory of a Hecke algebra of 
$G(r, p, n)$}. J.~Algebra {177} (1995), 164--185.
%
%
\bibitem[ArKo]{ArKo} \textsc{S.~Ariki and  K.~Koike}, {A Hecke algebra of $(\mathbb{Z}/r\mathbb{Z})\wr S_n$ and construction of its irreducible representations}. Adv.~Math.~\textbf{106} (1994), 216--243.
%
\bibitem[Ben]{Ben} \textsc{M.~Benard}, {Schur indices and splitting fields of the unitary reflection
groups}. J.~Algebra {38} (1976), 318--342.
%
\bibitem[BGO]{BGO}
\textsc{C. Bessenrodt, J.-B. Gramain, and  J. Olsson},
Generalized hook lengths in symbols and partitions. J. Algebraic Combinatorics 36 (2011), 309--332.
%
\bibitem[Bes1]{Bes1} \textsc{D.~Bessis}, {Sur le corps de d{\'e}finition d'un
groupe de r{\'e}flexions complexe}. Comm.~ Algebra {25}(8) (1997),
2703--2716.
%
\bibitem[Bes2]{Bes2} \textsc{D.~Bessis}, {Finite complex reflection arrangements are $K(\pi, 1)$}. Ann.~Math.~{181}(3) (2015), 809--904.

\bibitem[BCC]{BCC} \textsc{C.~Boura, E.~Chavli and M.~Chlouveraki},  {The BMM symmetrising trace conjecture for the exceptional 2-reflection groups of rank 2}. J.~Algebra {558} (2020), 176--198.

%
\bibitem[BCCK]{BCCK} \textsc{C.~Boura, E.~Chavli, M.~Chlouveraki and  K.~Karvounis},  {The BMM symmetrising trace conjecture for groups $G_4$, $G_5$, $G_6$, $G_7$, $G_8$}. J.~ Symbolic Comput.~ {96} (2020), 62--84.
%
\bibitem[Bou]{Bou05} \textsc{N.~Bourbaki}, Lie groups and Lie algebras. Chapters 4--6, Elements of mathematics, English translation of ``Groupes et alg\`ebres de Lie'', Springer, 2005.
%
%
%
\bibitem[BreMa]{BreMa} \textsc{K.~Bremke and  G.~Malle,} {Reduced words and a length function for $G(e, 1, n)$}. Indag. Math. {8} (1997), 453--469.
%
%
\bibitem[BroMa]{BM} \textsc{M.~Brou\'e and  G.~Malle,} {Zyklotomische Heckealgebren}.  Ast\'erisque  {212}  (1993), 119--189.
%
\bibitem[BMM]{BMM}  \textsc{M.~Brou{\'e}, G.~Malle and  J.~Michel,} {Towards Spetses I}. Trans.~Groups {4} (1999), 157--218.
%
%
%
\bibitem[BMR]{BMR}  \textsc{M.~Brou{\'e}, G.~Malle and R.~Rouquier}, {Complex 
	reflection groups, braid groups, Hecke algebras}. J.~reine angew.
Math. {500} (1998), 127--190.
%
%
\bibitem[BrKl]{BrKl} \textsc{J.~Brundan and A.~Kleshchev}.
Blocks  of  cyclotomic  Hecke  algebras  and  Khovanov--Lauda algebras. Invent. Math.178 (2009), 451--484.
%
\bibitem[Cha1]{Ch18}  \textsc{E.~Chavli}, {Universal  deformations  of  the  finite  quotients  of  the  braid  group  on  $3$  strands}. J.~Algebra {459} (2016), 238--271.
%
\bibitem[Cha2]{Ch17}  \textsc{E.~Chavli}, {The BMR freeness conjecture for the tetrahedral and octahedral family}. Comm.~Algebra {46}(1) (2018), 386--464.
%
\bibitem[Cha3]{Ch20}  \textsc{E.~Chavli},  Decomposition matrices for the generic Hecke algebras on $3$ strands in characteristic zero. Algebras and Representation Theory 23 (2020), 1001--1030. 



\bibitem[Chl1]{C}
\textsc{M.Chlouveraki},
Blocks and families for cyclotomic Hecke algebras. Lecture Notes in Mathematics 1981, Springer-Verlag Berlin Heidelberg, 2009,

\bibitem[Chl2]{HDR}
\textsc{M.~Chlouveraki}, Hecke algebras, generalisations and representation theory. Habilitation thesis, available at:
https://tel.archives-ouvertes.fr/tel-01411063


\bibitem[ChJa1]{CJG}
\textsc{M.~Chlouveraki and N.~Jacon}, 
Schur elements and basic sets for cyclotomic Hecke algebras. 
J. Algebra and its Applications 10 (2011), no. 5, 979--993.


\bibitem[ChJa2]{CJ}
\textsc{M.~Chlouveraki and N.~Jacon},
Schur elements for Ariki--Koike algebras and applications. 
J. Algebraic Combinatorics, Volume 35, N. 2 (2012), 291--311.

\bibitem[ChMi]{CM}
\textsc{M.~Chlouveraki and H.~Miyachi},
Decomposition matrices for $d$-Harish-Chandra series : the exceptional rank 2 cases. 
LMS Journal of Computation and Mathematics 14 (2011), 271--290.

\bibitem[ChPo]{CP}
\textsc{M.~Chlouveraki and G.~Pouchin},
Representation theory and an isomorphism theorem for the framisation of the Temperley--Lieb algebra.  Math. Z. 285, No. 3 (2017), 1357--1380.

\bibitem[CPA1]{CPA}
 \textsc{M. Chlouveraki and L. Poulain d'Andecy},
  Representation theory of the Yokonuma--Hecke algebra. Adv. Math. 259 (2014), 134-172.
  
  
  \bibitem[CPA2]{CPA2}
 \textsc{M. Chlouveraki and L. Poulain d'Andecy},
  Markov traces on affine and cyclotomic Yokonuma--Hecke algebras. Int. Math. Res. Not., Vol. 2016, No. 14 (2016), 4167--4228.

  \bibitem[CuRe]{CuRe} 
  \textsc{C.~W.~Curtis and I.~Reiner}, Representation theory of finite groups and associative algebras. Wiley, New York, 1962; reprinted 1988 as Wiley Classics Library Edition.

\bibitem[DiMa]{DiMa}
\textsc{R. Dipper and A. Mathas},
 Morita equivalences of Ariki--Koike algebras. 
 Math. Z. 240 (2002), no. 3, 579--610.
 
 
 \bibitem[EsRy]{EsRy}  \textsc{J. Espinoza and S. Ryom-Hansen}, Cell structures for the Yokonuma--Hecke algebra and the algebra of braids and ties. J. Pure and Applied Algebra, Volume 222, Issue 11(2018), 3675--3720.
 
 \bibitem[Fa]{F}
\textsc{M. Fayers},
Weights of multipartitions and representations of Ariki--Koike algebras. Adv. Math. 206 (2006) 112-144.

\bibitem[Ge1]{G90}
\textsc{M. Geck}, On the classification of $l$-blocks of finite groups of Lie type. J.~Algebra 151 (1992),180--191.

\bibitem[Ge2]{G}
\textsc{M. Geck},
Brauer trees of Hecke algebras. Comm. Algebra 20 (1992), 2937--2973


\bibitem[GHLMP]{chevie2} \textsc{M.~Geck, G.~Hiss, F.~L\"ubeck, G.~Malle, and G.~Pfeiffer}, {CHEVIE -- a system for computing and processing generic character tables}.
    Applicable Algebra in Engineering Comm. and Computing {7} (1996), 175--210.


\bibitem[GIM]{GIM} \textsc{M. Geck, L. Iancu, G. Malle}, Weights of Markov traces and generic degrees. Indag. Math. (N.S.)
11(3) (2000), 379--397.

\bibitem[GecJa]{GJ}
\textsc{M. Geck and  N. Jacon},
Irreducible Representations of Hecke algebras at roots of unity. Algebras and Applications,  Springer-Verlag London ltd ( 2011). 


\bibitem[GePf]{GP}
\textsc{M. Geck and  G. Pfeiffer},
 Characters of finite Coxeter groups and Iwahori--Hecke algebras. London Mathematical Society Monographs. New Series, 21. The Clarendon Press, Oxford University Press, New York, 2000. xvi+446 pp. 
 
 \bibitem[GeRo]{GR}
 \textsc{M. Geck and R. Rouquier}, 
 Centers and simple modules for Iwahori-Hecke algebras. In: Finite reductive groups, related structures and representations (ed. M. Cabanes), Progress in Math. 141, Birkh\"auser (1997), 251--272.
 
 
\bibitem[GenJa]{GJa}
\textsc{G. Genet and N. Jacon,}
Modular representations of cyclotomic Hecke algebras of type $G(r,p,n)$.
Int. Math. Res. Not. 2006, Art. ID 93049, 18 pp.



\bibitem[Ja]{JKle}
\textsc{N. Jacon},
Kleshchev multipartitions and extended Young diagrams.
Advances in Mathematics 339 (2018)  367--403.

\bibitem[JaLe]{JL}
\textsc{N. Jacon and C.  Lecouvey},
Cores of Ariki-Koike algebras. Doc. Math. 26  (2021), 103--124.

\bibitem[JPA]{JPA}
\textsc{N. Jacon and L. Poulain d'Andecy},
 An isomorphism theorem for Yokonuma--Hecke algebras and applications to link invariants. Math. Z. 283 (2016), no. 1, 301--338
 
 \bibitem[Jam]{Jam}
 \textsc{G.~D.~James}, The decomposition matrices of ${\rm GL}_n(q)$ for $n \leq10$, Proc. London Math. Soc. 60 (1990), 225--265.
 
 \bibitem[Ju1]{Ju1} \textsc{J. Juyumaya}, Sur les nouveaux g\'en\'erateurs de l'alg\`ebre de Hecke $H(G,U,1)$. J. Algebra 204 (1998) 49--68.
 
\bibitem[Ju2]{Ju2} \textsc{J. Juyumaya}, Markov trace on the Yokonuma--Hecke algebra. J. Knot Theory Ramifications 13 (2004) 25--39.

\bibitem[JuKa]{JuKa} \textsc{J. Juyumaya and S. Kannan}, Braid relations in the Yokonuma--Hecke algebra. J. Algebra 239 (2001) 272--297.


\bibitem[Lu]{Lus} \textsc{G. Lusztig,} Character sheaves on disconnected groups VII. Represent. Theory 9 (2005), 209--266.


\bibitem[Mal1]{Ma1} \textsc{G.~Malle}, Unipotente Grade imprimitiver komplexer Spiegelungsgruppen. J.
Algebra 177 (1995), 768--826.

\bibitem[Mal2]{Ma2}   \textsc{G.~Malle}, {Degr{\'e}s relatifs des alg{\`e}bres cyclotomiques associ{\'e}es aux groupes de r{\'e}flexions complexes de dimension deux}. Progress in
Math. {141}, Birkh{\"a}user (1996), 311--332.


\bibitem[Mal3]{Ma4}   \textsc{G.~Malle}, {On the rationality and fake degrees of characters of cyclotomic algebras}. J.~Math.~Sci.~Univ.
Tokyo {6 }(1999), 647--677.

\bibitem[Mal4]{Ma5}   \textsc{G.~Malle}, {On the generic degrees of cyclotomic algebras}. Represent. Theory {4 }(2000), 342--369.



\bibitem[MalMat]{MaMa}  \textsc{G.~Malle and  A.~Mathas}, {Symmetric cyclotomic Hecke algebras}. J. Algebra {205}(1) (1998), 275--293.


\bibitem[MalMi]{MM10}  \textsc{G.~Malle and  J.~Michel,} {Constructing representations of Hecke algebras for complex
reflection groups}. LMS J. Comput. Math. {13} (2010), 426--450.

\bibitem[MaNa]{MN}  \textsc{G.~Malle and  G.Navarro,} {Blocks with equal height zero degrees.} Trans. Amer. Math. Soc. 363
(2011), no. 12, 6647--6669. 

\bibitem[Mar1]{Mar41}   \textsc{I.~Marin}, {The cubic Hecke algebra on at most $5$ strands}. J. Pure Appl. Algebra {216} (2012), 2754--2782.


 \bibitem[Mar2]{Mar43}  \textsc{ I.~Marin,} {The freeness conjecture for Hecke algebras of complex reflection groups, and the case of the Hessian group $G_{26}$}. J. Pure Appl. Algebra {218} (2014), 704--720.


\bibitem[Mar3]{MarNew}   \textsc{I.~Marin,} {Proof of the BMR conjecture for $G_{20}$ and $G_{21}$}. J. Symbolic Computation {92} (2019), 1--14.


\bibitem[MarPf]{MaPf} \textsc{ I.~Marin and G.~Pfeiffer,} {The  BMR  freeness  conjecture  for  the  $2$-reflection  groups}.  Math.~Comput. {86} (2017), 2005--2023.


\bibitem[MarWa]{Mar46} \textsc{ I.~Marin and E.~Wagner,} {Markov traces on the BMW algebras}. arXiv:1403.4021.

\bibitem[Mat]{Mat} \textsc{A. Mathas}, Matrix units and generic degrees for the Ariki-Koike algebras. J. Algebra 281 (2004),
695--730.

\bibitem[Mi]{chevie1} \textsc{J.~Michel}, {The development version of the CHEVIE package of GAP3}. J.~Algebra {435} (2015), 308--336.



\bibitem[PdA]{PdA} \textsc{L.~Poulain d'Andecy,} {Invariants for links from classical and affine Yokonuma--Hecke algebras}. In: Algebraic Modeling of Topological and Computational Structures and Applications, Springer Proceedings in Mathematics \& Statistics 219 (2017), 77--95.

\bibitem[Ros]{Salim} 
\textsc{S. Rostam}, Cyclotomic quiver Hecke algebras and Hecke algebra of $G(r,p,n)$. Transactions of the AMS 371 no. 6 (2019) 3877--3916.


\bibitem[ShTo]{ShTo} \textsc{G.~C.~Shephard and  J.~A.~Todd,} {Finite unitary reflection
	groups}. Canad.~J.~Math. {6} (1954), 274--304.
	
\bibitem[Th1]{thi} \textsc{N.~Thiem}, {Unipotent Hecke algebras: the structure, representation theory, and combinatorics}. Ph.D. Thesis, University of Wisconsin (2004).

\bibitem[Th2]{thi2} \textsc{N.~Thiem}, {Unipotent Hecke algebras of \,${\rm GL}_n(\mathbb{F}_q)$}. J.~Algebra {284} (2005) 559--577. 

\bibitem[Th3]{thi3} \textsc{N.~Thiem}, {A skein-like multiplication algorithm for unipotent Hecke algebras}. Trans.~Amer.~Math.~Soc.~{359}(4) (2007) 1685--1724.

\bibitem[Tsu]{Tsu} \textsc{S.~Tsuchioka}, {BMR freeness for icosahedral family}. Experimental Mathematics (2018), doi: 10.1080/10586458.2018.1455072.


\bibitem[Wa]{W}
\textsc{K. Wada},  Blocks of category $\mathcal{O}$ for rational Cherednik algebras and of cyclotomic Hecke algebras of type $G(r,p,n)$.
Osaka J. Math. 48 (2011), 895-9

\bibitem[Web]{Web} Webpage of the GAP3 program: \emph{http://chlouveraki.perso.math.cnrs.fr/gap3}

\bibitem[Wi]{Wi} \textsc{G.~Williamson}, Schubert calculus and torsion explosion, J. Amer. Math. Soc. 30 (2017), 1023--1046 

\bibitem[Yo]{yo} \textsc{T.~Yokonuma}, {Sur la structure des anneaux de Hecke d'un groupe de Chevalley fini}. C.~R.~Acad.~Sci.~Paris Ser.~I Math.~{264}  (1967)  344--347.

\end{thebibliography}
\end{document}